\documentclass[reqno]{amsart}

\usepackage{amsthm}
\usepackage{amsmath}
\usepackage{amsfonts}
\usepackage{amssymb}
\usepackage{mathrsfs}
\usepackage{graphicx}
\usepackage{amsrefs}
\usepackage[shortlabels]{enumitem}
\usepackage[low-sup]{subdepth} 

\usepackage{color}


\definecolor{light-gray}{gray}{0.4}
\definecolor{very-light-gray}{gray}{0.1}
\newcommand{\gggrayedout}[1]{\mepar{\color{blue}has removed stuff}{
}}

\renewcommand{\emptyset}{\varnothing}
\renewcommand{\epsilon}{\varepsilon}

\def\stirlingtwo#1#2{S^{(2)}(#1,#2)}
\def\stirlingone#1#2{S^{(1)}(#1,#2)}

\def\sequencetwo{{S}^{(2)}}
\def\sequenceone{{S}^{(1)}}
\newcommand\failure{{\rm{Fail}}}
\newcommand\denom{{\rm{Denom}}}
\def\({\left(}
\def\){\right)}

\def\cprime{$'$}
\def\littleo{\operatorname{o}} 
\def\orbitset{{\mathscr{O}}}
\def\fixset{{\mathscr{F}}}
\def\orbit{{\mathsf{O}}}
\def\fix{{\mathsf{F}}}
\def\least{{\mathsf{L}}}

\def\eper{\mathsf{reg}}
\def\lefschetz{L} 
\def\matrices{{\rm{Mat}}}
\def\K{\mathbb K}
\def\Z{\mathbb Z}
\def\Q{\mathbb Q}
\def\R{\mathbb R}
\def\S{\mathbb S}
\def\C{\mathbb C}
\def\N{\mathbb N}
\def\T{\mathbb T}
\def\imag{{\rm{i}}}
\def\eul{{\rm{e}}}
\renewcommand\ge{\geqslant}
\renewcommand\le{\leqslant}
\renewcommand\geq{\geqslant}
\renewcommand\leq{\leqslant}
\renewcommand\subset{\subseteq}
\renewcommand\supset{\supseteq}
\DeclareMathOperator{\lcm}{lcm}
\DeclareMathOperator{\CI}{I}   
\def\sequences{{\mathscr{S}}}
\def\doldB{{\mathsf{B}}}
\def\doldC{{\mathsf{C}}}
\newcommand{\ind}{\operatorname{ind}}
\newcommand{\Per}{\mathscr{Per}}

\newtheorem{theorem}{Theorem}[section]
\newtheorem{lemma}[theorem]{Lemma}
\newtheorem{proposition}[theorem]{Proposition}
\newtheorem{corollary}[theorem]{Corollary}

\theoremstyle{definition}
\newtheorem{definition}[theorem]{Definition}
\newtheorem{example}[theorem]{Example}

\theoremstyle{remark}
\newtheorem{problem}[theorem]{Open problem}
\newtheorem{remark}[theorem]{Remark}

\def\divides{{\mathchoice{\mathrel{\bigm|}}{\mathrel{\bigm|}}{\mathrel{|}}%
       {\mathrel{|}}}}
\def\notdivides{\mathrel{\kern-3pt\not\!\kern4.3pt\bigm|}}
\def\smallnotdivides{\mathrel{\kern-2pt\not\!\kern3.5pt|}}
\def\Divides{\divides\!\divides}
\newcommand\smalldivides{\mathrel{\kern-2pt\kern3.5pt|}}

\def\ftrace{\operatorname{Tr}}   

\DeclareMathOperator{\trace}{Tr}

\newcommand{\dirichlet}{\mathsf d}

\begin{document}
\title{Dold sequences, periodic points, and dynamics}
\author{Jakub Byszewski}
\address{Faculty of Mathematics and Computer Science, Institute of
Mathematics,
Jagiellonian University, ul.
prof. Stanis{\l}awa {\L}ojasiewicza 6, Krak{\'o}w, 30-348, Poland.
Orcid: 0000-0002-2338-5076}
\email{jakub.byszewski@gmail.com}
\author{Grzegorz Graff}
\address{Faculty of Applied Physics and Mathematics,
Gda{\'n}sk University of Technology,
Narutowicza 11/12 Str, Gda{\'n}sk,
80-233, Poland.
Orcid: 0000-0001-5670-5729}
\email{grzegorz.graff@pg.edu.pl}
\author{Thomas Ward}
\address{
School of Mathematics, Statistics and Physics, Newcastle University,
Newcastle upon Tyne, NE1 7RU, U.K.
Orcid: 0000-0002-8253-5767}
\email{tom.ward@newcastle.ac.uk}
\thanks{The first author was supported by the National Science Centre,
Poland under grant no.\ 2016/23/D/ST1/01124.
The second author was supported by the National Science Centre, Poland within the grant Sheng 1 UMO-2018/30/Q/ST1/00228.
The third author was
asked to give the `Wandering Seminar'
at Gda{\'n}sk University of Technology
in February~$2020$, on the topic of
`Periodic points in dynamics'.
The first and second authors had earlier
been preparing a survey on Dold sequences,
and---reflecting the wider history of
these ideas---we discovered that we were
in several cases talking about closely
related
things from different points of view. 
}

 \thanks{}

\subjclass[2010]{Primary 11B50, 37C25, 55M20}
\keywords{Newton sequences, Dold--Fermat sequences, generating sequences, Lefschetz
numbers, fixed point index, Dold congruences, periodic points}
\date{\today}

\begin{abstract}
In this survey we describe how the so-called Dold congruence
arises in topology, and how it relates to periodic point
counting in dynamical systems.
\end{abstract}

\maketitle
\section{Introduction}
\label{sectionintroduction}
The arithmetic properties of integer sequences
have pervasive connections to questions in number
theory, topology, geometry, combinatorics, dynamical
systems, and doubtless in many other places as well.
These notes are a survey of a chain of ideas whose
origin arguably lies in Fermat's little theorem,
that~$a^p$ is congruent to~$a$ modulo~$p$ for any
integer~$a$ and prime~$p$.
In one direction, it is natural to ask if there
is a canonical way in which this fundamental
congruence is the prime case of a more general
type of
statement about integer sequences.
In another, it is natural to ask of any congruence
if there is a counting argument that
exhibits (in this case)~$a^p-a$ as the
cardinality of a set with a natural~$p$-fold symmetry.
For example, Petersen~\cite{petersen} gave
a proof of Fermat's
little theorem in~$1872$ along exactly these lines, by
counting configurations of~$a$ colours
in~$p$ boxes arranged in a circle
(as a remark attached to a related argument
for Wilson's theorem). Arguments of this sort
have been, and continue to be, repeatedly reinvented or rediscovered---we
refer to Dickson~\cite[p.~75--86]{MR0245499} for some of the early history.
We will discuss questions
that flow from these elementary considerations,
in some cases starting from a quite different question
in topology or dynamics.

A feature of this area is that many of the results
not only have multiple independent
(in some cases, independently repeated) proofs
with different motivations,
but have multiple
histories. One consequence is that the same notion
(or class of integer sequences) may have different names,
and one of the things we try to do in these notes
is to indicate some of these names and to be---at
least internally---consistent in terminology.
In order to explain some of the motivation behind
the different threads here, we to some
extent repeat some of these repetitions,
favouring explanation of motivation over a strictly
logical linear development.

{Another feature of fixed point theory is that
the contexts and applications are widely varied.
The preface to the handbook edited by
Brown {\it{et al.}}~\cite{MR2170491}
talks about `the varied, and not easily classified,
nature of the mathematics that makes up topological fixed point theory'.
Given the existing literature on fixed point theory,
which sprawls in volume (MathSciNet indexes
more than ten thousand items under the primary
subject classification 47H10, `Fixed-point theorems',
for example) and contains several
weighty surveys and handbooks, this modest survey
perhaps needs some justification. Our defence is merely
that the circle of ideas discussed here has some
particularly elementary entry points, and brings
together facets of dynamical systems and topology
in ways that seem interesting---to the authors at
any rate.
}

One of the sources of integer sequences is
counting periodic points in dynamics,
which we will discuss from a
particularly elementary point of view. There are
many ways in which periodic points might
be studied from a more sophisticated point of
view, some of which are briefly outlined later.

Roughly speaking, in studying dynamical systems
one often starts with a map---perhaps
a smooth map on a differentiable manifold---and
uses attributes of the map like hyperbolicity or
local product structure to deduce specification or
closing properties strong enough to construct
periodic points for dynamical reasons (as opposed
to the global topological production of periodic
points mentioned at the end of
Section~\ref{sectionDoldSequences}).
We will describe some of the
consequences of starting at the other end,
motivated by the following
trivial observation (see Remark~\ref{remarkEnglandExampleStory}
for a specific example in the same spirit that originally triggered
the interest of the third-named author in these
kind of questions).
Assume that a map has a single fixed
point. Then, if the number of points fixed by the
second iterate of the map is finite, it must be
a non-negative \emph{odd} number.
This remark sets off a cascade of natural questions
about the combinatorial and analytical properties
of the sequences of periodic point counts of
dynamical systems viewed---initially---simply as
permutations of countable sets.
We explore some of these questions here, with an
emphasis on introducing a broad range of concepts
using fairly simple examples and instances of results,
rather than aiming for maximum generality.

Our focus will be on two closely related---but
motivationally rather different---classes of
integer sequences. Roughly speaking, the first
class of sequences finds its natural home
in topology, the second in dynamical systems
and combinatorics. There is no particular reason
to define one before the other: the class
we describe first is fundamental for a journey
into arithmetic
starting with Fermat's little theorem, the class
we describe second is fundamental for a journey
into dynamics
starting at the same place.

A sequence will be denoted~$a=(a_n)=(a_n)_{n\in\N}$,
and in particular a sequence is always indexed by the
natural numbers~$\N=\{1,2,3,\dots\}$ unless explicitly
indicated otherwise.
With very few exceptions, the maps we are concerned
with are self-maps, so we use phrases like
`a map of~$X$' or `a homeomorphism of~$X$' to
mean a self-map of~$X$, a self-homeomorphism of~$X$,
and so on.\label{pageselfmap}

\section{Dold Sequences}
\label{sectionDoldSequences}

The sequences which we will call
\emph{Dold sequences} as a result of the work
of Dold~\cite{MR724012} have (for example) also been called \emph{sequences having divisibility} in the thesis of Moss~\cite{pm},
\emph{pre-realizable sequences} by Arias de Reyna~\cite{MR2163516},
\emph{relatively realizable sequences} by Neum\"{a}rker~\cite{MR2511223},
\emph{Gauss sequences} by Minton~\cite{MR3195758}, and
\emph{generalized Fermat sequences} or
\emph{Fermat sequences}
by Du, Huang, and Li~\cites{MR1950443, MR2113176}.
Doubtless there are other names,
reflecting the long history and multiple settings
in which they appear.

\begin{definition}[Dold sequence]\label{definitionDoldSequence}\index{Dold sequence}\index{sequence!Dold}
An integer sequence~$a=(a_n)$ is called a
\emph{Dold sequence} if
\begin{equation}\label{equationDoldSequenceMobiusCondition}
\sum_{d\smalldivides n}\mu\(\frac{n}{d}\)a_d
\equiv 0
\end{equation}
modulo~$n$ for all~$n\ge1$.
\end{definition}

Here~$\mu$ denotes the classical
M{\"o}bius\index{Mobius@M\"obius!function}
function, defined by\label{pageMobiusFunction}
\begin{eqnarray*}
\mu(n)
=\left\{
\begin{array}{cl}
1&\mbox{ if }
n=1,\\
0&\mbox{ if~$n$ has a squared factor, and}\\
(-1)^r&\mbox{ if~$n$ is a product of~$r$ distinct primes.}
\end{array}
\right.
\end{eqnarray*}
In particular, if~$n=p$ is a prime,
then~\eqref{equationDoldSequenceMobiusCondition}
is the statement that~$a_p\equiv a_1$ modulo~$p$.
Thus Fermat's little theorem says that the
sequence~$(a^n)$ satisfies the Dold
condition at every prime (and it is
easy to show that it is in fact a Dold
sequence, and in this case the
congruence~\eqref{equationDoldSequenceMobiusCondition}
is normally attributed to Gauss).

We start by making some remarks about
Dold sequences.

\begin{enumerate}[(a)]
\item \label{exDoldref1} If~$A\in\matrices_{m,m}(\Z)$ is an integer matrix,
then the sequence~$\(\trace A^n\)$ is a Dold sequence,
generalizing Fermat's little theorem.
This observation in some form seems to have been known to Gauss, and has been
rediscovered by many others including Browder~\cite{MR0433267},
Peitgen~\cite{MR391074},
and Arnold~\cites{MR2061787,MR2108521,MR2261060,MR2261067};
we refer to notes of Smyth~\cite{MR843194}, Vinberg~\cite{MR2342588},
and Deligne~\cite{MR2506120} for more on this.
For prime~$n$ this was proved by
Sch\"{o}nemann~\cite{MR1578213} in~$1839$.
For~$m=1$ this has been proved
many times; among these are work of
Kantor~\cite{zbMATH02708691},
Weyr~\cite{zbMATH02705034},
Lucas~\cite{zbMATH02703462},
Pellet~\cite{zbMATH02703463},
Thue~\cite{zbMATH02629186},
Szele~\cite{MR28329},
and doubtless many others.
We refer to a note of Steinlein~\cite{MR3654835} for an account and
some of the history, and will discuss this again from
a dynamical point of view in Section~\ref{sectionDynamicalLinearRecurrenceSequences}.
\item The sequence of the number of
fixed points of iterates
of a map is a Dold sequence,  but not
all Dold sequences arise in this way (we will say more about
this later).
\item The original context considered by Dold~\cite{MR724012}
was to show that the sequence of \emph{fixed point indices} of iterations in
 a topological setting
satisfies~\eqref{equationDoldSequenceMobiusCondition}.
\item Marzantowicz and Przygodzki~\cite{MR1696325}
developed a theory of periodic expansions
for integral arithmetic functions, giving a different
characterization of the sequence of fixed point indices and  Lefschetz numbers of iterations of a map.
They also gave probably the
first complete proof of the fact that
sequences like~$\(\trace A^n\)$ are Dold sequences.
\item Many
sequences of combinatorial or
arithmetic origin are Dold sequences,
and in some cases it would be desirable
to have a combinatorial or dynamical
explanation. In simple cases this
is clear, but (for example) it
is not clear why the Bernoulli numerator,
the Bernoulli denominator, and the Euler
sequence~$(\tau_n)$,~$(\beta_n)$,\label{pageBernoullinumeratordenominator}
and~$((-1)^nE_{2n})_{n\ge 1}$ respectively, are Dold
sequences (here~$\frac{\tau_n}{\beta_n}=\vert\frac{B_{2n}}{2n}\vert$
in lowest terms for all~$n\ge1$,\label{pageEulerNumbers} and~$\sum_{n\ge0}E_n\frac{t^n}{n!}=\frac{2}{\eul^{t}+\eul^{-t}}$).
\item If~$(b_n)$ is any integer sequence, then it follows from basic properties of the M\"obius function (see Lemma \ref{CONVID}\ref{CONVID4}) that~$\(\sum_{d\smalldivides n}db_d\)$ is a Dold sequence. Conversely, we prove in Lemma \ref{lemmaConstructingDoldSequences} that all Dold sequences can be obtained in this manner.
\item Many multiplicative sequences are
Dold sequences. One way to construct such sequences is to apply the above method and write such sequences in the form~$(a_n)=\(\sum_{d\smalldivides n}db_d\)$. A simple argument
(see Hardy and Wright~\cite[Th.~265]{MR568909})
shows that~$(a_n)$ is multiplicative if and only if~$(b_n)$ is multiplicative. Choosing $(b_n)$ to be the sequence $b_n=n^k$ of $k$th powers, $k\in\N_0$, shows that the sequence $a_n=\sigma_{k+1}(n)$ of sums of $(k+1)$st powers of divisors is a Dold sequence. On the other hand, the sequence $\sigma_0(n)$ of the number of divisors is not a Dold sequence.
\item Beukers {\it{et al.}}~\cite{MR3855372} characterized
the Dold congruence for the coefficients of the Laurent series
associated to a multi-variable rational function.
\item Samol and van Straten~\cite[Sec.~4]{MR3418804}
considered related congruences of arithmetic interest
for sequences generated as the constant term of
powers of Laurent polynomial; these results were later extended by Mellit
and~Vlasenko \cite{MR3461433}.
\item Kenison {\it{et al.}}~\cite{kenison2020positivity}
have studied positivity questions
for certain holonomic sequences (sequences satisfying
a linear recurrence relation with polynomial
coefficients), relating positivity to questions
on vanishing of periods.
\item While our emphasis is different,
we also mention work of Marzantowicz
and W\'{o}jcik~\cite{MR2302587} addressing related
questions for periodic solutions of certain
ordinary differential equations.
\end{enumerate}

{One of the questions we will be interested
in is the characterization of the intersection of classes of sequences
defined in different ways. The next observation
is a simple instance of this.

\begin{lemma}[Puri {\it{et al.}}~\protect{\cite[Lem.~2.4]{MR1873399}}]\label{lemmanopolynomialisDold}
If a completely multiplicative sequence is a Dold sequence,
then it is the constant sequence with every term equal to~$1$.
If a polynomial sequence is a Dold sequence, then it is a constant.
\end{lemma}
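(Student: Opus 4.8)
The plan is to treat the two assertions separately, extracting in each case just enough information from the congruences~\eqref{equationDoldSequenceMobiusCondition} at well-chosen values of~$n$ to pin the sequence down.

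For the completely multiplicative case, I would first normalise: a completely multiplicative sequence has $a_1 = 1$ (the only alternative $a_1 = 0$ gives the identically zero sequence, which is customarily excluded from the definition). For a prime~$p$ and any~$k \ge 1$ the M\"obius sum at~$n = p^k$ collapses to the two surviving terms $d = p^k$ and $d = p^{k-1}$, so~\eqref{equationDoldSequenceMobiusCondition} reads $a_{p^k} - a_{p^{k-1}} \equiv 0 \pmod{p^k}$; complete multiplicativity rewrites this as $a_p^{\,k-1}(a_p - 1) \equiv 0 \pmod{p^k}$ (with $a_p^{\,0} = 1$). The case $k = 1$ gives $a_p \equiv 1 \pmod p$, so $\gcd(a_p, p) = 1$; for $k \ge 2$ the factor $a_p^{\,k-1}$ is then a unit modulo~$p^k$ and may be cancelled, leaving $a_p \equiv 1 \pmod{p^k}$. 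As this holds for every~$k$, the integer $a_p - 1$ is divisible by arbitrarily high powers of~$p$, hence $a_p = 1$; complete multiplicativity now forces $a_n = 1$ for all~$n$.

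For the polynomial case, write $a_n = P(n)$; since $(a_n)$ is an integer sequence, $P$ is an integer-valued polynomial, say of degree~$D$. Expanding~$P$ in the binomial basis $\binom{x}{0}, \dots, \binom{x}{D}$ with integer coefficients and noting that $p \mid \binom{pm}{k}$ whenever $1 \le k \le D < p$, one gets $P(pm) \equiv P(0) \pmod p$ for every prime $p > D$ and every~$m \ge 1$ (for~$P$ with integer coefficients this step is immediate). Now evaluate~\eqref{equationDoldSequenceMobiusCondition} at $n = pq$ for two distinct primes $p, q > D$: the divisors of~$pq$ are $1, p, q, pq$, so the congruence reads $P(pq) - P(p) - P(q) + P(1) \equiv 0 \pmod{pq}$. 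Reducing modulo~$p$ and using $P(pq) \equiv P(p) \equiv P(0) \pmod p$, the first two terms cancel and we are left with $P(q) \equiv P(1) \pmod p$. Fixing~$q$ and letting~$p$ range over all primes exceeding~$D$, the fixed integer $P(q) - P(1)$ is divisible by infinitely many primes, hence $P(q) = P(1)$; since this holds for all large primes~$q$, the polynomial $P - P(1)$ has infinitely many roots and is therefore identically zero, i.e.\ $P$ is constant.

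Neither argument presents a genuine obstacle, but it is worth noting that restricting to prime-power values of~$n$ does not suffice in the polynomial case: the congruences at $n = p$ and at $n = p^k$ yield only a couple of linear relations among the coefficients of~$P$, which is too little for $\deg P \ge 3$, whereas the single composite modulus $n = pq$ settles the matter at once. The one computation requiring care is the congruence $P(pm) \equiv P(0) \pmod p$ in the integer-valued (as opposed to integer-coefficient) case, which is exactly where the hypothesis $p > \deg P$ enters; and in the multiplicative part one should not forget the normalisation $a_1 = 1$, since otherwise the zero sequence is a counterexample to the statement as literally written.
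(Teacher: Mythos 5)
Your proof is correct. For the completely multiplicative part you follow essentially the same route as the paper, deducing $a_p^{k-1}(a_p-1)\equiv 0 \pmod{p^k}$ and concluding $a_p\equiv 1 \pmod{p^k}$ for all $k$; the only cosmetic difference is that you invoke ``$a_p^{k-1}$ is a unit, cancel'' where the paper writes $a_p=1+pk_p$ and expands. Your explicit remark about the normalisation $a_1=1$ (and the zero sequence as a degenerate case) is a point the paper's proof also uses implicitly, so it is well worth making.

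For the polynomial part your argument is genuinely different. The paper works in two stages: it evaluates the Dold congruence at $n=p^2$ to show $p\divides c_1$ for every prime $p$, hence $c_1=0$; it then uses the special form $h(n)=c_0+n^2g(n)$ together with the congruence at $n=p^2q$ to conclude $h(p^2)=h(p)$ for all primes $p$. You instead use the single squarefree modulus $n=pq$ with both primes larger than $\deg P$, combined with the observation $P(pm)\equiv P(0)\pmod p$ (which you justify directly via the binomial basis for integer-valued polynomials), to obtain $P(q)\equiv P(1)\pmod p$ in one step; varying $p$ then forces $P(q)=P(1)$ for all large primes $q$. This is shorter, avoids the intermediate elimination of $c_1$ entirely, and also sidesteps the paper's ``clear denominators'' reduction by working with the binomial basis from the start. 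Your closing remark that the prime-power moduli alone would be insufficient in the polynomial case is a fair observation and is exactly why both arguments are forced to use composite moduli.
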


\begin{proof}
If~$(a_n)$ is completely multiplicative, so~$a_{mn}=a_ma_n$
for all~$m,n\ge1$, and satisfies~\eqref{equationDoldSequenceMobiusCondition}
then~$
p^r\divides a_{p^{r-1}}(a_p-1)=a_p^{r-1}(a_p-1)
$
for any prime~$p$ and all~$r\ge1$.
Thus we can write~$a_p=1+pk_p$ for some~$k_p\in\N_0$,
and deduce that~$p^r\divides(1+pk_p)^{r-1}pk_p$
for all~$r\ge1$. This gives~$k_p\equiv0$ modulo~$p^r$
for all~$r\ge1$, so~$a_n=1$ for all~$n\ge1$.

For the second assertion,
assume that~$h(n)=c_0+c_1n+\cdots+c_kn^k$
with~$c_k\neq0$ and~$k\ge1$ is a
polynomial taking integer values on the integers
with the property that~$(h(n))$
is a Dold sequence. After clearing
fractions in~\eqref{equationDoldSequenceMobiusCondition},
we may assume without loss of generality
that the coefficients~$c_0,\dots,c_k$ are integers.
For any prime~$p$ we
have
\[
\tfrac{1}{p^2}\(h(p^2)-h(p)\)\in\(-\tfrac{c_1}{p}+\Z\)\cap\Z
\]
by~\eqref{equationDoldSequenceMobiusCondition},
so~$p\divides c_1$. It follows (since~$p$ is any
prime) that~$c_1=0$.
Thus we can write~$h(n)=c_0+n^2(c_2+c_3n+\cdots+c_kn^{k-2})$
for all~$n\ge1$.
Now let~$p$ and~$q$ be different primes,
so
\[
h({p^2q})-h({pq})-h({p^2})+h(p)
\equiv -h(p^2)+h(p)
\]
modulo~$p^2q$.
Since the left-hand side is independent
of~$q$, this shows that
\[
h(p^2)=h(p)
\]
for all primes~$p$.
This contradicts the hypothesis that~$h$
is a non-constant polynomial.
\end{proof}

}

{
The motivation for some of these questions comes
from the Lefschetz fixed point theorem.\index{Lefschetz fixed point theorem}
For a suitable topological space~$X$
(for example, a finite CW complex or,
more generally, a space whose homology groups
are finitely generated and trivial in all sufficiently
high dimensions) and continuous map~$f\colon X\to X$,
the \emph{Lefschetz number}~$\lefschetz(f)$\label{LefschetzIndex}
is defined
by
\[
\lefschetz(f)=\sum_{k\ge0}(-1)^k\trace(f_{*}\vert_{H_k(X,\Q)}),
\]
the alternating sum of the traces of the
linear maps induced by~$f$ on the
singular homology groups~$H_k(X,\Q)$ of~$X$
with rational coefficients.

\begin{theorem}[Lefschetz fixed point theorem]
If~$f\colon X\to X$ is a continuous map
of a compact CW-complex or, more
generally, a retract of a compact CW-complex,
and~$\lefschetz(f)\neq0$, then~$f$ has a fixed point.
\end{theorem}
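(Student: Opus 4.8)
The plan is to prove the contrapositive: assuming~$f$ has no fixed point, I would show that~$\lefschetz(f)=0$. Two standard properties of the Lefschetz number are used throughout. First, homotopy invariance: homotopic maps induce the same homomorphisms on homology, so~$\lefschetz$ depends only on the homotopy class. Second, one can compute~$\lefschetz$ at chain level via the Hopf trace formula: if~$\varphi\colon C_{*}\to C_{*}$ is a chain endomorphism of a complex of~$\Q$-vector spaces that is finite-dimensional in each degree and vanishes in all sufficiently high degrees, then
\[
\sum_{k\ge0}(-1)^k\trace\bigl(\varphi\vert_{C_k}\bigr)=\sum_{k\ge0}(-1)^k\trace\bigl(\varphi_{*}\vert_{H_k(C_{*})}\bigr).
\]

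First I would reduce to the case of a finite simplicial complex. Suppose~$X$ is a retract of a compact CW-complex~$Y$, with retraction~$r\colon Y\to X$ and inclusion~$\iota\colon X\hookrightarrow Y$, so~$r\iota=\mathrm{id}_X$, and put~$\widetilde f=\iota f r\colon Y\to Y$. Since~$\widetilde f(Y)\subseteq X$ and~$r$ restricts to the identity on~$X$, every fixed point of~$\widetilde f$ lies in~$X$ and is a fixed point of~$f$, and conversely; moreover, by cyclicity of the trace together with~$(r\iota)_{*}=\mathrm{id}$,
\[
\trace\bigl((\iota f r)_{*}\vert_{H_k(Y;\Q)}\bigr)=\trace\bigl((f r\iota)_{*}\vert_{H_k(X;\Q)}\bigr)=\trace\bigl(f_{*}\vert_{H_k(X;\Q)}\bigr),
\]
so~$\lefschetz(\widetilde f)=\lefschetz(f)$, and the retract case follows once the theorem is known for compact CW-complexes. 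As any such complex is homotopy equivalent to a finite simplicial complex and both hypotheses and conclusion are homotopy invariant, I may assume~$X=|K|$ for a finite simplicial complex~$K$ and compute~$\lefschetz$ from simplicial homology, which is canonically isomorphic to singular homology.

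Next I would fix a metric on~$X$. Because~$f$ has no fixed point and~$X$ is compact, there is~$\epsilon>0$ with~$d(x,f(x))\ge\epsilon$ for all~$x\in X$. Pass to a barycentric subdivision~$L$ of~$K$ all of whose simplices have diameter less than~$\epsilon/2$, and then to a further subdivision~$K'$ of~$L$ carrying a simplicial approximation~$g\colon K'\to L$ of~$f$; then~$g\simeq f$, and for every~$x$ the points~$g(x)$ and~$f(x)$ lie in a common simplex of~$L$. Let~$\lambda\colon C_{*}(L)\to C_{*}(K')$ be a subdivision chain map---a chain-homotopy inverse of the canonical map---chosen so that it sends each simplex~$\tau$ of~$L$ to a chain supported on simplices of~$K'$ contained in~$\tau$. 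Then~$g_{\#}\lambda$ is an endomorphism of~$C_{*}(L)$, and I would check that the coefficient of~$\tau$ in~$g_{\#}\lambda(\tau)$ vanishes for every simplex~$\tau$ of~$L$: any nonzero contribution would require a simplex~$\sigma\subseteq\tau$ of~$K'$ with~$g(\sigma)=\tau$, but then, choosing~$x\in\sigma$, both~$x$ and~$g(x)$ lie in~$\tau$ while~$g(x)$ and~$f(x)$ lie in a common simplex of~$L$, so
\[
d(x,f(x))\le d(x,g(x))+d(g(x),f(x))<\tfrac{\epsilon}{2}+\tfrac{\epsilon}{2}=\epsilon,
\]
contradicting~$d(x,f(x))\ge\epsilon$. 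Hence~$\trace(g_{\#}\lambda\vert_{C_k(L)})=0$ for all~$k$, so the Hopf trace formula gives~$\sum_k(-1)^k\trace\bigl((g_{\#}\lambda)_{*}\vert_{H_k(L;\Q)}\bigr)=0$. Since~$\lambda_{*}$ is the canonical isomorphism, under the identifications~$H_k(L;\Q)\cong H_k(X;\Q)\cong H_k(K';\Q)$ the endomorphism~$(g_{\#}\lambda)_{*}$ is identified with~$g_{*}=f_{*}$, so~$\lefschetz(f)=\lefschetz(g)=0$.

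The hard part will be the chain-level bookkeeping at the end. A simplicial approximation of a self-map is a map between two genuinely different triangulations of~$X$, so one must introduce the subdivision operator~$\lambda$ and then verify carefully that~$g_{\#}\lambda$ really does compute~$\lefschetz(f)$ on homology---equivalently, that the canonical identifications of the homologies of~$K$,~$L$, and~$K'$ are compatible with~$\lambda_{*}$ and with the homomorphism induced by~$g$. By comparison, the geometric estimate ruling out a simplex mapping over itself, together with the routine uses of compactness, uniform continuity, the simplicial approximation theorem, and the finiteness of~$K$ (which is what makes the traces meaningful), are straightforward. One could alternatively deduce the theorem from the additivity and normalization of the fixed point index, the viewpoint emphasized elsewhere in this survey, but the route above is self-contained.
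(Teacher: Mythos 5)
The paper states the Lefschetz fixed point theorem as a classical background result and offers no proof of its own, so there is nothing internal to compare your argument against; it has to be judged on its own terms. The core of your argument is the standard simplicial proof and is correct: compactness gives a uniform lower bound~$\epsilon$ on~$d(x,f(x))$, iterated subdivision shrinks mesh below~$\epsilon/2$, a simplicial approximation~$g$ on a further subdivision~$K'$ together with a subdivision chain map~$\lambda$ produces an endomorphism~$g_{\#}\lambda$ of~$C_{*}(L)$ whose diagonal entries vanish (since~$\sigma\subseteq\tau$ with~$g(\sigma)=\tau$ would force~$d(x,f(x))<\epsilon$), and the Hopf trace formula then yields~$\lefschetz(f)=0$. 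The reduction from a retract~$X$ of~$Y$ to~$Y$ itself, via~$\widetilde f=\iota f r$ and the cyclicity of trace, is likewise correct.

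The gap lies in the reduction from compact CW-complexes to finite simplicial complexes. You claim that both hypothesis and conclusion are homotopy invariant and so the CW-complex may be replaced by a homotopy-equivalent polyhedron, but this is false for the relevant hypothesis: the property of being fixed-point free is not a homotopy invariant, neither of the map nor of the space. If~$h\colon X\to|K|$ and~$k\colon|K|\to X$ realize a homotopy equivalence and~$f$ is fixed-point free, the conjugate~$f'=hfk$ may have fixed points, and since~$kh$ is only homotopic to~$\mathrm{id}_X$ rather than equal to it, a fixed point of~$f'$ does not transport back to one of~$f$. What the argument actually needs is the stronger structural fact that a compact CW-complex---and, more generally, any retract of one---is a compact, finite-dimensional metric ANR, hence embeds in some~$\R^N$ and admits a compact polyhedral neighbourhood~$U$ with a retraction~$U\to X$. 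Your retract reduction then applies verbatim with~$Y=U$, a finite simplicial complex, and the proof closes. Replacing the homotopy-equivalence step with this ANR/neighbourhood-retract step would make the argument correct.
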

}

{More
generally, }Dold~\cite{MR724012}
considered the sequence~$(\ind({f^n},U))$
{of indices}
under the hypothesis that~$f\colon U\to X$ is
a continuous map {on an open subset~$U$}
of an Euclidean neighbourhood
retract~$X$\index{Euclidean neighbourhood retract}
with compact set fixed by each
iterate, and characterised the possible
sequences arising as being those
satisfying~\eqref{equationDoldSequenceMobiusCondition}.
For the case of~$n$ prime, the
direct analogue of Fermat's little theorem,
similar results were shown earlier
by Steinlein~\cite{MR317119}
and by
Zabre\u{\i}ko
and Krasnosel\cprime ski\u{\i}~\cite{MR0315533}.
By using the Leray--Schauder degree to extend
the concepts to infinite dimensions, the Dold
relations have also been shown for suitable
maps on Banach spaces by Steinlein~\cite{MR3392979}.

As pointed out by Smale~\cite[p.~768]{MR228014}
in his influential survey, ``the whole
difficulty of the problem [...] is that
it counts the periodic [points] geometrically,
not algebraically.'' That is, what is natural
to count geometrically in the line of thought
initiated by Lefschetz' theorem is not
the same as simply counting the number
of solutions to the equation~$f(x)=x$.

\subsection{Linear Recurrence Dold Sequences}
\label{sectionDoldLinearRecurrenceSequences}

We will see later that sequences of the form~$\(\trace(A^n)\)$
for~$A\in\matrices_{d,d}(\Z)$ are automatically
Dold sequences for dynamical reasons.
Minton has shown that this is essentially
the only way that a linear recurrence sequence can be a Dold
sequence. Since Minton was working with sequences of rational numbers, he relaxed the conditions in the definition of a Dold sequence, insisting only that the divisibility holds except for powers of finitely many primes. For linear recurrence sequences this is equivalent, however, to considering rational multiples of Dold sequences.

\begin{theorem}[Minton~\protect{\cite[Th.~2.15 \& Rem.~2.16]{MR3195758}}]\label{theoremMinton}
An integer linear recurrence sequence~$(a_n)$ is a rational multiple of
a Dold sequence if and only if it is a \emph{trace
sequence}\index{trace sequence}\index{sequence!trace},
meaning that there is an algebraic number field~$\K$,
rationals~$b_1,\dots,b_r\in\Q$ and
algebraic integers~$\theta_1,\dots,\theta_r\in\K$
with
\[
a_n=\sum_{i=1}^{r}b_i\ftrace_{\K\vert\Q}(\theta_i^n)
\]
for all~$n\ge1$.
\end{theorem}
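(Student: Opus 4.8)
We handle the two implications separately.

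\emph{From trace sequences to rational multiples of Dold sequences.} If $\theta\in\K$ is an algebraic integer, let $A_{\theta}\in\matrices_{d,d}(\Z)$, with $d=[\K:\Q]$, be the matrix of multiplication by $\theta$ on the free $\Z$-module $\mathcal{O}_{\K}$; then $\trace(A_{\theta}^{n})=\ftrace_{\K\vert\Q}(\theta^{n})$ for all $n\ge1$, so by remark~\ref{exDoldref1} each sequence $\bigl(\trace(A_{\theta_i}^{n})\bigr)_{n}$ is a Dold sequence. Writing $b_i=\beta_i/N$ with $\beta_i\in\Z$ and $N\in\N$, the sequence $\bigl(Na_n\bigr)_{n}=\bigl(\sum_i\beta_i\trace(A_{\theta_i}^{n})\bigr)_{n}$ is a $\Z$-linear combination of Dold sequences, and since the congruence~\eqref{equationDoldSequenceMobiusCondition} is $\Z$-linear in the sequence it is again a Dold sequence. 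Hence $(a_n)=\tfrac1N\bigl(Na_n\bigr)_{n}$ is a rational multiple of a Dold sequence.

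\emph{The converse.} Suppose $(a_n)$ is an integer linear recurrence sequence such that $q\cdot(a_n)$ is a Dold sequence for some $q\in\N$. The generating function $\sum_{n\ge1}a_n t^n$ lies in $\Z[[t]]\cap\Q(t)$ and vanishes at $0$, so by Fatou's lemma it equals $P(t)/Q(t)$ with $P,Q\in\Z[t]$ coprime and $Q(0)=1$; the eigenvalues $\lambda_1,\dots,\lambda_s$ of the recurrence---the reciprocal roots of $Q$---are then roots of a monic integer polynomial, hence algebraic integers. Write $a_n=\sum_{j=1}^{s}p_j(n)\lambda_j^{n}$ for $n\ge1$, with the $\lambda_j$ distinct and each $p_j$ a nonzero polynomial, and let $L$ be the Galois closure over $\Q$ of the field generated by the $\lambda_j$ and the coefficients of the $p_j$. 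It now suffices to prove that each $p_j$ is a constant $c_j$ lying in $\Q$ and depending only on the $\operatorname{Gal}(L\vert\Q)$-orbit of $\lambda_j$: for then, grouping the $\lambda_j$ into Galois orbits and writing each orbit as the full set of conjugates of a single algebraic integer $\theta$ yields $a_n=\sum_{\theta}c_{\theta}\,\ftrace_{\Q(\theta)\vert\Q}(\theta^{n})$, which takes the form in the statement after we set $\K=L$ and rescale each coefficient by $[L:\Q(\theta)]^{-1}\in\Q$.

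\emph{The main step.} The Dold congruence for $(qa_n)$ at a composite modulus $n=pm$, with $p\nmid qm$ prime, rearranges---using that every divisor of $pm$ is uniquely $d$ or $pd$ with $d\smalldivides m$---to
\[
p\ \mid\ \sum_{d\smalldivides m}\mu(m/d)\,(a_{pd}-a_d).
\]
Fix $\sigma\in\operatorname{Gal}(L\vert\Q)$, and let $\mathfrak p$ range over the infinitely many primes of $L$---available by Chebotarev's theorem---lying over rational primes $p$ that are unramified in $L$, coprime to $q$, large enough that all coefficients of all $p_j$ are $\mathfrak p$-integral, and whose Frobenius at $\mathfrak p$ equals $\sigma$. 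Reducing the exponential-polynomial expression modulo $\mathfrak p$---using $\lambda_j^{pd}=(\lambda_j^{d})^{p}\equiv\sigma(\lambda_j)^{d}\pmod{\mathfrak p}$ (as $\sigma$ is the Frobenius) and $p_j(pd)\equiv p_j(0)\pmod{\mathfrak p}$ (as $p\in\mathfrak p$)---gives $a_{pd}-a_d\equiv\Phi_{\sigma}(d)\pmod{\mathfrak p}$, where $\Phi_{\sigma}(d):=\sum_{j}\bigl(p_j(0)\,\sigma(\lambda_j)^{d}-p_j(d)\,\lambda_j^{d}\bigr)\in L$. Consequently $\sum_{d\smalldivides m}\mu(m/d)\Phi_{\sigma}(d)\equiv0\pmod{\mathfrak p}$ for infinitely many $\mathfrak p$; being a fixed element of $L$ that is $\mathfrak p$-integral for all but finitely many $\mathfrak p$, it must vanish, and this holds for every $m$. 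Möbius inversion then forces $\Phi_{\sigma}\equiv0$, that is,
\[
\sum_{j}p_j(0)\,\sigma(\lambda_j)^{d}=\sum_{j}p_j(d)\,\lambda_j^{d}\qquad\text{for all }d\ge1 .
\]
Since $\sigma$ permutes $\{\lambda_1,\dots,\lambda_s\}$, comparing the coefficient of each $\lambda_k^{d}$ on the two sides---using the linear independence of the functions $d\mapsto d^{i}\lambda_k^{d}$---shows that $p_k$ equals the constant $p_{\sigma^{-1}(k)}(0)$. In particular every $p_j$ is a constant $c_j$, with $c_k=c_{\sigma^{-1}(k)}$; letting $\sigma$ vary over $\operatorname{Gal}(L\vert\Q)$ shows $c_k$ depends only on the Galois orbit of $\lambda_k$, and combining this with the relation $\sigma(c_j)=c_{j'}$ (whenever $\sigma(\lambda_j)=\lambda_{j'}$) that is forced by $a_n\in\Q$ gives $\sigma(c_j)=c_j$ for every $\sigma$, so $c_j\in\Q$. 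This establishes the required claim, and hence the theorem.

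\emph{The obstacle.} The delicate part is the main step. One must exploit the Dold congruences at the right composite moduli $n=pm$: prime powers alone are blind to nonconstant polynomial parts, as the sequence $a_n=n+(-1)^{n}n$---which is not a rational multiple of a Dold sequence---illustrates. One also needs, for each $\sigma$, infinitely many primes realizing $\sigma$ as a Frobenius, which is exactly Chebotarev's theorem, together with control of the finitely many primes of bad reduction. By contrast the integrality of the $\lambda_j$ comes for free from Fatou's lemma, and the passage from "each $p_j$ constant and rational on Galois orbits" to the trace representation is purely formal.
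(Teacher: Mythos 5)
The paper cites this theorem to Minton (with a pointer to his Theorem~2.15 and Remark~2.16) and does not supply an in-text proof, so there is no argument of the paper's own to compare your attempt against; I am therefore assessing it on its merits.

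Your proof appears to be correct. The easy direction is dispatched exactly as one expects: the regular representation of multiplication by $\theta$ on $\mathcal O_{\K}$ gives an integer matrix $A_\theta$ with $\trace(A_\theta^n)=\ftrace_{\K\vert\Q}(\theta^n)$, remark~\eqref{exDoldref1} (or Corollary~\ref{croollaryWhereYouTheLastPersionToKnow}) makes each of these a Dold sequence, and clearing denominators preserves the Dold congruence, which is $\Z$-linear. For the converse, your central device is to test the Dold congruence for $(qa_n)$ at $n=pm$ with $p$ a Frobenius-controlled prime supplied by Chebotarev. With the non-degeneracy built into the paper's definition of a linear recurrence ($\alpha_0\ne0$), Fatou's lemma does indeed make the $\lambda_j$ algebraic integers and make the exponential-polynomial formula valid for all $n\ge1$. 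The reduction $a_{pd}-a_d\equiv\Phi_\sigma(d)\pmod{\mathfrak p}$ uses correctly that $\mathrm{Frob}_{\mathfrak p}$ gives $\lambda_j^{pd}\equiv\sigma(\lambda_j)^d$ and that $p\in\mathfrak p$ gives $p_j(pd)\equiv p_j(0)$, both congruences being between $\mathfrak p$-integral quantities and hence multipliable. Letting $\mathfrak p$ run over infinitely many such primes forces each $\sum_{d\smalldivides m}\mu(m/d)\Phi_\sigma(d)$ to vanish, M\"obius inversion kills $\Phi_\sigma$ itself, and the linear independence of $d\mapsto d^i\lambda_k^d$ shows every $p_k$ is the constant $p_{\sigma^{-1}(k)}(0)$. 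Combining this with the Galois-equivariance $\sigma(c_j)=c_{j'}$ forced by $a_n\in\Q$ then shows every $c_j$ is rational, constant on Galois orbits, and the regrouping into traces over $L$ (with the $[L:\Q(\theta)]^{-1}$ rescaling) is routine. In short, each step checks out, and your remark that prime-power congruences alone cannot detect a nonconstant $p_j$ --- illustrated by $a_n=n+(-1)^n n$, which passes all the $a_{p^r}\equiv a_{p^{r-1}}\pmod{p^r}$ tests but is not a rational multiple of a Dold sequence --- correctly identifies why the composite moduli $pm$ are essential.

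Since the paper provides no proof I cannot directly compare your route against the one it intends, but Minton's own argument as I recall it proceeds through a $p$-adic framework rather than explicitly through Chebotarev and Frobenius conjugacy classes, so your argument is at the very least an independent and instructive alternative; the Frobenius mechanism you use is the same one that explains the matrix Fermat--Euler congruences at prime powers, here leveraged at composite $pm$ to see the polynomial parts.
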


To see why the `rational multiple' part is important, consider the characteristic sequence~$(a_n)=(0,1,0,1,\dots)$
of even numbers. This is a trace sequence
since
\[
a_n=\tfrac12(1)^n+\tfrac12(-1)^n
\]
for all~$n\ge1$,
but is not a Dold sequence since~$a_2-a_1\not\equiv 0$ modulo~$2$. However,~$(2a_n)$ is a Dold sequence
because, for example, it counts the periodic
points for a map on a set with two elements that
swaps the elements.

We will see a simple instance of
Theorem~\ref{theoremMinton} in
Lemma~\ref{theoremFibonacciCondition} for
a specific historically important
linear recurrence.

\subsection{Generating Functions of Dold Sequences}
\label{sectionGeneratingFunctionsOfDoldSequences}

Write~$\sequences(R)$
for the set of all sequences~$(a_n)$
with values in a subring~$R$ of a
field~$\K$ of characteristic
zero, and~$\sequences$ if the field
plays no role. We define bijections
by
\begin{align*}
\doldB\colon\sequences(\K)&\longrightarrow\sequences(\K)\\
(a_n)&\longmapsto(b_n)
=\Bigl(\frac1n\sum_{d\smalldivides n}\mu\(\frac{n}{d}\)a_d\Bigr)
\intertext{and}
\doldC\colon\sequences(\K)&\longrightarrow\sequences(\K)\\
(a_n)&\longmapsto(c_n)
=
\(\frac1n\bigl(a_n-c_1a_{n-1}-\cdots-c_{n-1}a_1\bigr)\).
\end{align*}
That~$\doldB$ and~$\doldC$ are bijections is clear,
and the inverse maps are given by the
formulas
\begin{align}
a_n&=\sum_{d\smalldivides n}db_d\label{equationAddedInAsInverse}
\intertext{and}
a_n&=c_1a_{n-1}+\cdots+c_{n-1}a_1+nc_n\nonumber
\end{align}
for all~$n\ge1$.
To see that the
formula~\eqref{equationAddedInAsInverse}
does indeed give the inverse of $B$, use Lemma~\ref{CONVID}\ref{CONVID4}.
The sequence~$(c_n)$ is called the
\emph{generating sequence} of~$(a_n)$.
As with much else in these notes, the
various relationships between sequences
described here have multiple names and
appear in many different guises; in settings
close to ours they may be found
in work of Du {\it{et al.}}~\cite{MR2113176} and
Arias de Reyna~\cite{MR2163516}.

\begin{theorem}\label{theoremEulertransforms}
For~$(a_n)\in\sequences$ let~$(b_n)=\doldB((a_n))$
and~$(c_n)=\doldC((a_n))$. Then, in the ring
of formal power series, we have
\begin{equation}\label{equationStartingToIsolateForCoronavirus1}
\exp\bigl(\sum_{n\ge1}\frac{a_n}{n}z^n\bigr)
=
\prod_{n\ge1}(1-z^n)^{-b_n}
=
\bigl(1-\sum_{n\ge1}c_nz^n\bigr)^{-1}.
\end{equation}
\end{theorem}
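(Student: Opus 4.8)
The strategy is to prove the two equalities in~\eqref{equationStartingToIsolateForCoronavirus1} separately, treating the middle product as the hub. All three expressions lie in~$\K[[z]]$ with constant term~$1$, so it suffices to verify the identities formally.

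First I would establish the equality
\[
\exp\Bigl(\sum_{n\ge1}\frac{a_n}{n}z^n\Bigr)
=
\prod_{n\ge1}(1-z^n)^{-b_n}.
\]
Take the formal logarithm of the right-hand side: using~$-\log(1-z^n)=\sum_{k\ge1}\frac{z^{nk}}{k}$ one gets
\[
\sum_{n\ge1}b_n\sum_{k\ge1}\frac{z^{nk}}{k}
=\sum_{m\ge1}\Bigl(\sum_{n\smalldivides m}\frac{n b_n}{m}\Bigr)z^m
=\sum_{m\ge1}\frac{1}{m}\Bigl(\sum_{n\smalldivides m}n b_n\Bigr)z^m,
\]
where in the middle step I substituted~$m=nk$ and summed over the divisors~$n$ of~$m$. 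By the inverse formula~\eqref{equationAddedInAsInverse}, which says~$a_m=\sum_{n\smalldivides m}n b_n$ and which follows from Lemma~\ref{CONVID}\ref{CONVID4}, the inner sum is exactly~$a_m$. Hence the logarithm of the product equals~$\sum_{m\ge1}\frac{a_m}{m}z^m$, and exponentiating gives the first equality. (One should note that the rearrangement of the double sum is legitimate because for each fixed~$m$ only finitely many pairs~$(n,k)$ contribute, so there is no convergence issue in~$\K[[z]]$.)

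Second I would establish
\[
\prod_{n\ge1}(1-z^n)^{-b_n}
=
\Bigl(1-\sum_{n\ge1}c_nz^n\Bigr)^{-1},
\]
equivalently~$\exp\bigl(\sum_{n\ge1}\frac{a_n}{n}z^n\bigr)=\bigl(1-\sum_{n\ge1}c_nz^n\bigr)^{-1}$ using the first equality. Set~$F(z)=\exp\bigl(\sum_{n\ge1}\frac{a_n}{n}z^n\bigr)$, so~$F$ has constant term~$1$ and~$F'(z)/F(z)=\sum_{n\ge1}a_nz^{n-1}$, i.e. $zF'(z)=F(z)\sum_{n\ge1}a_nz^n$. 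Writing~$F(z)=\sum_{m\ge0}f_mz^m$ with~$f_0=1$ and comparing coefficients of~$z^m$ yields the recursion~$m f_m=\sum_{j=1}^{m}a_j f_{m-j}$. On the other hand, the defining recursion for~$(c_n)$ rearranges to~$a_n=c_1a_{n-1}+\cdots+c_{n-1}a_1+nc_n$; I would show by induction on~$m$ that the unique solution of~$m f_m=\sum_{j=1}^m a_j f_{m-j}$ with~$f_0=1$ is~$f_m=$ the $m$th coefficient of~$\bigl(1-\sum_{n\ge1}c_nz^n\bigr)^{-1}$. Concretely, if~$G(z)=\bigl(1-\sum c_nz^n\bigr)^{-1}=\sum g_mz^m$, then~$G$ satisfies~$g_m=\sum_{n=1}^{m}c_n g_{m-n}$ with~$g_0=1$; one checks that this $G$ also satisfies~$zG'(z)=G(z)\sum_{n\ge1}a_nz^n$ by a direct manipulation using the relation between~$(a_n)$ and~$(c_n)$, and then uniqueness of solutions to the first-order formal differential equation~$zy'=y\cdot(\sum a_nz^n)$ with prescribed constant term forces~$F=G$.

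**Main obstacle.**
The routine parts are the two index manipulations; the one step needing care is the passage in the second equality from the recursion~$mf_m=\sum_{j=1}^m a_jf_{m-j}$ to the closed form involving~$(c_n)$. The cleanest route is to recognise that both~$F$ and~$G$ are the unique power series with constant term~$1$ solving~$zy' = y\cdot\sum_{n\ge1}a_nz^n$: for~$F$ this is immediate from its definition as an exponential, and for~$G$ one must verify it, which amounts to checking that the coefficient recursions~$g_m=\sum_{n=1}^m c_n g_{m-n}$ and the Newton-type identity~$a_n=\sum_{i=1}^{n-1}c_i a_{n-i}+nc_n$ together imply~$m g_m=\sum_{j=1}^m a_j g_{m-j}$. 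This is a finite combinatorial identity that can be proved by substituting the expression for~$a_j$ and reorganising the double sum; I expect this to be the only place where one has to be genuinely attentive to bookkeeping, though it remains elementary. An alternative, avoiding differential equations entirely, is to take formal logarithms of both~$\prod(1-z^n)^{-b_n}$ and~$(1-\sum c_nz^n)^{-1}$ and compare—but the logarithm of~$(1-\sum c_nz^n)^{-1}$ is less transparent, so the differential-equation argument is preferable.
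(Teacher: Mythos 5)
Your proof is correct and the first equality is handled exactly as in the paper. For the second equality your route is a near-miss of the paper's and worth contrasting. You set up the formal differential equation $zy' = y\sum_{n\ge1}a_nz^n$ and propose to show that both $F=\exp(\sum a_nz^n/n)$ and $G=(1-\sum c_nz^n)^{-1}$ satisfy it with constant term $1$, then invoke uniqueness. This works, but the verification that $G$ satisfies the ODE requires substituting $a_j=c_1a_{j-1}+\cdots+c_{j-1}a_1+jc_j$ into $\sum_{j}a_jg_{m-j}$ and reorganising a double sum—exactly the bookkeeping you flag as the main obstacle. The paper sidesteps this entirely: it sets $f(z)=-\exp(-\sum a_nz^n/n)$, so $-f=1-\sum c_n'z^n$ for some coefficients $c_n'$, and computes the logarithmic derivative $f'/f=-\sum a_nz^{n-1}$. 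Expanding $f'=-f\cdot\sum a_nz^{n-1}$ and comparing coefficients of $z^{n-1}$ gives, with no further manipulation, the recursion $nc_n'=a_n-c_1'a_{n-1}-\cdots-c_{n-1}'a_1$, which is verbatim the defining recursion of $\doldC$, hence $c_n'=c_n$. In other words, by applying the log-derivative to the \emph{reciprocal} rather than to $F$ itself, the paper produces the $c$-recursion directly and never needs Newton's identities $mf_m=\sum_j a_jf_{m-j}$ nor the combinatorial identity matching them against the geometric-series recursion for $G$. Your version buys nothing extra and costs one nontrivial double-sum rearrangement; if you rewrite the second half by differentiating $\log\bigl((1-\sum c_nz^n)^{-1}\bigr)$, or equivalently the reciprocal as the paper does, the argument collapses to a one-line coefficient comparison.
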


\begin{proof}
Using the Taylor expansion for the logarithm, we obtain
\begin{align*}
\log\prod_{n\ge1}(1-z^n)^{-b_n}
&=
\sum_{n\ge1}-b_n\log(1-z^n)
=
\sum_{n\ge1}\sum_{k\ge1}b_n\frac{z^{nk}}{k}\\
&=
\sum_{m\ge1}\sum_{n\smalldivides m}nb_n\frac{z^m}{m}
=\sum_{m\ge1}\frac{a_m}{m}z^m,
\end{align*}
giving the first equality.

Write
\[
f(z)=-\exp\bigl(-\sum_{n\ge1}\frac{a_n}{n}z^n\bigr)
\]
for the negative reciprocal of the left-hand expression
in~\eqref{equationStartingToIsolateForCoronavirus1}, and
let~$f(z)=\sum_{n\ge0}c_n'z^n$ be its
Taylor expansion. Comparing
the values at zero
gives~$c_0'=-1$, and the logarithmic
derivative is given by
\[
\frac{f'(z)}{f(z)}=-\sum_{n\ge1}a_nz^{n-1}.
\]
Hence
\[
f'(z)=\sum_{n\ge1}nc_n'z^{n-1}=
\(\sum_{n\ge1}a_nz^{n-1}\)\cdot
\(1-\sum_{n\ge1}c_n'z^n\),
\]
giving the equalities
\[
nc_n'=a_n-c_1'a_{n-1}-\cdots-c_{n-1}'a_1
\]
for all~$n\ge1$. This proves that~$c_n'=c_n$
for all~$n\ge1$, completing the proof.
\end{proof}

In the dynamical context relations
like~\eqref{equationStartingToIsolateForCoronavirus1}
may be used to prove integrality of certain sequences;
we refer to Jaidee {\it{et al.}}~\cite{MR4002553}
for examples.
By M{\"o}bius inversion
we may deduce from the
definition of~$\doldB$ a classical
relation due to M{\"o}bius~\cite{MR1577896},
\[
\eul^x=\prod_{n\ge1}(1-x^n)^{-\mu(n)/n},
\]
valid for~$\vert x\vert<1$.
In a more arithmetic direction, applying this
to the Artin--Hasse exponential series
(defined in~\cite{MR3069494})\index{Artin--Hasse series}
\[
E_p(x)=\exp\(x+\frac{x^p}{p}+\frac{x^{p^2}}{p^2}+\frac{x^{p^3}}{p^3}+\cdots\)
=
\exp\bigl(\sum_{n\ge1}\frac{a_n}{n}x^n\bigr)
\]
gives the corresponding~$(b_n)$
by the formula
\[
b_n=
\begin{cases}
\frac{\mu(n)}{n}&\mbox{if }p\notdivides n;\\
0&\mbox{otherwise}.
\end{cases}
\]
Thus Theorem~\ref{theoremEulertransforms}
gives
\[
E_p(x)=\prod_{p\smallnotdivides n}(1-x^n)^{-\mu(n)/n},
\]
from which
the well-known fact that the coefficients
of the Taylor expansion of~$E_p$
are~$p$-adic integers
may be deduced
by using the~$p$-adic continuity
of the binomial coefficient polynomials,
or from `Dwork's lemma'
(see Koblitz~\cite[Ch.~IV.2]{MR0466081} for the latter
argument).

Theorem~\ref{theoremEulertransforms} gives a
constructive alternative definition of Dold
sequences. This has a parallel in
Section~\ref{sectionRealizableSequences},
where it is expressed as the relation between
counting closed orbits and counting periodic points.

\begin{lemma}\label{lemmaConstructingDoldSequences}
For~$(a_n)\in\sequences$ let~$(b_n)=\doldB((a_n))$
and~$(c_n)=\doldC((a_n))$.
Then the following are equivalent:
\begin{enumerate}[\rm(a)]
\item $(a_n)$ is a Dold sequence;
\item $(b_n)\in\sequences(\Z)$;
\item $(c_n)\in\sequences(\Z)$.
\end{enumerate}
\end{lemma}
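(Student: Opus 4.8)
The plan is to establish the equivalence by showing $(a)\Leftrightarrow(b)$ directly from the definitions, and then $(b)\Leftrightarrow(c)$ via the generating-function identity of Theorem~\ref{theoremEulertransforms}. For the equivalence $(a)\Leftrightarrow(b)$, I would simply observe that $nb_n=\sum_{d\smalldivides n}\mu(n/d)a_d$ by the very definition of $\doldB$, so the Dold congruence~\eqref{equationDoldSequenceMobiusCondition} — namely that $n$ divides $\sum_{d\smalldivides n}\mu(n/d)a_d$ for every $n\ge1$ — holds if and only if each $b_n$ is an integer. (One should note that $b_n$ always lies in $\K$; the content is exactly the integrality.) This half is essentially immediate.

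For $(b)\Leftrightarrow(c)$, the idea is to use the factorisation
\[
\prod_{n\ge1}(1-z^n)^{-b_n}
=
\Bigl(1-\sum_{n\ge1}c_nz^n\Bigr)^{-1}
\]
from~\eqref{equationStartingToIsolateForCoronavirus1}. First suppose $(b_n)\in\sequences(\Z)$. Then each factor $(1-z^n)^{-b_n}$ is a power series with integer coefficients: if $b_n\ge0$ this is clear, and if $b_n<0$ it is a polynomial in $z$ with integer coefficients; in either case the infinite product converges in the $z$-adic topology on $\Z[[z]]$ to an element of $1+z\Z[[z]]$. Hence $\bigl(1-\sum c_nz^n\bigr)^{-1}\in1+z\Z[[z]]$, and since the group of units of $\Z[[z]]$ congruent to $1$ modulo $z$ is closed under inversion, $1-\sum c_nz^n\in1+z\Z[[z]]$ as well, which says precisely that $(c_n)\in\sequences(\Z)$. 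Conversely, if $(c_n)\in\sequences(\Z)$, then $\bigl(1-\sum c_nz^n\bigr)^{-1}\in1+z\Z[[z]]$; I would then recover the $b_n$ one at a time by the standard argument that a power series $F\in1+z\Z[[z]]$ has a unique expansion $F=\prod_{n\ge1}(1-z^n)^{-b_n}$ with $b_n\in\Z$, determined recursively by matching coefficients of $z^n$ (the coefficient of $z^n$ in the product depends on $b_1,\dots,b_n$, and $b_n$ enters linearly with coefficient $1$). This pins down $b_n\in\Z$ for all $n$, and by uniqueness these agree with $\doldB((a_n))$.

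The main obstacle — really the only point requiring care — is the manipulation of the infinite product $\prod_{n\ge1}(1-z^n)^{-b_n}$ as a genuine element of $\Z[[z]]$, i.e.\ checking $z$-adic convergence and that integrality is preserved under the product and under inversion in $1+z\Z[[z]]$. Once that formal bookkeeping is in place, both implications follow from the uniqueness of the product expansion. As an alternative to routing through Theorem~\ref{theoremEulertransforms}, one can prove $(b)\Leftrightarrow(c)$ purely recursively from the inverse formulas $a_n=\sum_{d\smalldivides n}db_d$ and $a_n=c_1a_{n-1}+\cdots+c_{n-1}a_1+nc_n$: an induction on $n$ shows that the $\Z$-module generated by $\{a_1,\dots,a_n\}$ coincides with that generated by $\{b_1,\dots,b_n\}$ and with that generated by $\{c_1,\dots,c_n\}$ after suitable scaling, but the generating-function route is cleaner and is the one I would present.
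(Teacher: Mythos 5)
Your proposal is correct and follows essentially the same route as the paper: the equivalence (a)$\Leftrightarrow$(b) is immediate from the inverse formula for $\doldB$, and (b)$\Leftrightarrow$(c) is extracted from the product identity in Theorem~\ref{theoremEulertransforms}, with integrality of the $b_n$ recovered from integrality of the $c_n$ by the same coefficient-by-coefficient induction (the paper works with $g(z)=\prod_{n\ge1}(1-z^n)^{b_n}=1-\sum_{n\ge1}c_nz^n$ directly, so it avoids your extra remark that $1+z\Z[[z]]$ is closed under inversion, but that is a cosmetic difference only).
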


\begin{proof}
The sequence~$(a_n)$ is a Dold
sequence if and only if the values of~$(b_n)$
are integers by~\eqref{equationAddedInAsInverse}.
Writing
\[
g(z)=
\prod_{n\ge1}(1-z^n)^{b_n}
=
\bigl(1-\sum_{n\ge1}c_nz^n\bigr),
\]
it is clear that if all the powers~$b_n$
are integers, then so are all the
coefficients~$c_n$. For the reverse direction,
assume that~$c_n\in\Z$ for all~$n\ge1$
and assume that~$b_1,\dots,b_{m-1}$
are integers.
The power series of~$g$
computed modulo~$z^{m+1}$
gives
\[
g(z)
\equiv
\prod_{n=1}^{m}(1-z^n)^{b_n}
\equiv
(1-b_mz^m)\prod_{n=1}^{m-1}(1-z^n)^{b_n}
\equiv
\prod_{n=1}^{m-1}(1-z^n)^{b_n}-b_mz^m
\]
modulo~$z^{m+1}$. Since~$g$
has integer coefficients, we deduce that~$b_m$
is an integer, proving that all~$b_n$ are integers
by induction.
\end{proof}

The different ways of expressing the property of
being a Dold sequence facilitate a way to decompose
a Dold sequence into elementary periodic sequences as
follows.
For~$d\in\N$ we define
\[
\eper_d(n)
=
\begin{cases}
d&\mbox{if }d\divides n;\\
0&\mbox{if }d\notdivides n.
\end{cases}
\]
The following decomposition---a different
way of expressing the transformation~$\doldB$---is
given by the following result of
Jezierski and Marzantowicz~\cite[Prop.~3.2.7]{MR2189944}.

\begin{proposition}\label{propositionDoldPeriodicDecomposition}
Let~$(a_n)\in\sequences$ and let~$(b_n)=\doldB((a_n))$
as above. Then
\begin{equation}\label{perexp}
a_n=\sum_{k=1}^{\infty}b_k\eper_k(n).
\end{equation}
Moreover,~$(a_n)$ is an integer Dold sequence
if and only if~$(b_n)\in\sequences(\Z)$.
\end{proposition}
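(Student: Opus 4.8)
The plan is to verify the identity \eqref{perexp} by a direct computation and then to invoke Lemma~\ref{lemmaConstructingDoldSequences} for the final equivalence. First I would observe that, by the very definition of $\eper_k$, for a fixed $n$ the term $b_k\eper_k(n)$ vanishes unless $k\divides n$, and in that case it equals $kb_k$. Hence the (formally infinite) sum on the right-hand side of \eqref{perexp} is actually finite and equals $\sum_{k\smalldivides n}kb_k$. By the inversion formula \eqref{equationAddedInAsInverse}, which is exactly the statement that $\doldB$ is inverted by $(b_n)\mapsto\bigl(\sum_{d\smalldivides n}db_d\bigr)$, this is precisely $a_n$. That proves the displayed decomposition.

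For the second assertion, note that $(a_n)$ being an integer Dold sequence means, by Definition~\ref{definitionDoldSequence}, that $(a_n)\in\sequences(\Z)$ and that the congruence \eqref{equationDoldSequenceMobiusCondition} holds for all $n\ge 1$; the latter says exactly that $\frac1n\sum_{d\smalldivides n}\mu(n/d)a_d=b_n$ is an integer for every $n$, i.e.\ $(b_n)\in\sequences(\Z)$. Conversely, if $(b_n)\in\sequences(\Z)$, then \eqref{perexp} (or equivalently \eqref{equationAddedInAsInverse}) expresses each $a_n$ as an integer combination of the $b_k$, so $(a_n)\in\sequences(\Z)$, and the congruence \eqref{equationDoldSequenceMobiusCondition} holds because $nb_n=\sum_{d\smalldivides n}\mu(n/d)a_d$. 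This is the equivalence (a)$\iff$(b) already recorded in Lemma~\ref{lemmaConstructingDoldSequences}, so one may simply cite it.

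There is essentially no hard step here: the content is bookkeeping with the two mutually inverse transformations $\doldB$ and $(b_n)\mapsto\bigl(\sum_{d\smalldivides n}db_d\bigr)$. The only point deserving a word of care is the interchange of summation implicit in reading the infinite sum $\sum_{k=1}^{\infty}b_k\eper_k(n)$ as a finite one; since for each fixed $n$ only the divisors $k\divides n$ contribute, no convergence issue arises and the rearrangement is trivially legitimate. I would therefore present the proof in two short paragraphs mirroring the two assertions, keeping the computation of $\sum_{k\smalldivides n}kb_k=a_n$ explicit and deferring the integrality equivalence to Lemma~\ref{lemmaConstructingDoldSequences}.
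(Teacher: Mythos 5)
Your proof is correct and follows exactly the route the paper indicates: the paper dismisses the proposition as an immediate consequence of the formula for $\doldB$ (specifically the inverse formula~\eqref{equationAddedInAsInverse}) together with Lemma~\ref{lemmaConstructingDoldSequences}, which is precisely the two-step argument you give. You have simply spelled out the bookkeeping the paper leaves implicit.
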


This is an immediate consequence of the  formula for~$\doldB$ and
Lemma~\ref{lemmaConstructingDoldSequences}, and in the
context of dynamics this may be thought
of as building up a dynamical system as
a union of closed orbits.
The class of integer multiples
of elementary periodic sequences
is closed under pointwise multiplication,
since we have
\begin{equation}\label{equationProductElementaryPeriodicSequences}
\eper_k\cdot\eper_{\ell}=\gcd(k,\ell)\eper_{\lcm(k,\ell)}.
\end{equation}
When we study dynamical systems with
finitely many points of each period,
it will be clear that a bound on the
number of periodic points is equivalent
to a bound on the possible
lengths of closed orbits. In the more
general context of Dold sequences this
is expressed as follows.

\begin{lemma}[Babenko {\&} Bogaty\u{\i}~\cite{MR1130026}]\label{lemmaDoldPeriodicIffBounded}
A Dold sequence is bounded if and only
if it is periodic. If this is the case, it can be written as a finite linear combination with integer coefficients of elementary periodic sequences $\eper_k$.
\end{lemma}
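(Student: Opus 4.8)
The plan is to prove the two implications separately; the direction \emph{periodic $\Rightarrow$ bounded} is trivial (a sequence of period $N$ takes at most $N$ values, and uses nothing about the Dold property), so essentially all the work lies in \emph{bounded $\Rightarrow$ periodic}. Carrying that implication out will, as a by-product, also produce the asserted representation as a finite integer combination of elementary periodic sequences, so I would organise the argument so that periodicity and the normal form drop out together.

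For the main direction, suppose $|a_n|\le M$ for all $n$ and set $(b_n)=\doldB((a_n))$. The key observation is that boundedness of $(a_n)$ forces $(b_n)$ to decay to $0$. From the definition of $\doldB$ we have $n b_n=\sum_{d\smalldivides n}\mu(n/d)a_d$, whence
\[
|n b_n|\le\sum_{d\smalldivides n}|a_d|\le M\,\tau(n),
\]
where $\tau(n)$ is the number of divisors of $n$. Since each divisor $d\le\sqrt n$ of $n$ is paired with the divisor $n/d\ge\sqrt n$, one has the elementary bound $\tau(n)\le 2\sqrt n$, and therefore $|b_n|\le 2M/\sqrt n\to 0$ as $n\to\infty$. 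But $(a_n)$ is an integer Dold sequence, so by Proposition~\ref{propositionDoldPeriodicDecomposition} (equivalently Lemma~\ref{lemmaConstructingDoldSequences}) the sequence $(b_n)$ is integer-valued; an integer sequence tending to $0$ is eventually $0$, so there is an $N$ with $b_n=0$ for all $n>N$.

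To finish, I would invoke the decomposition~\eqref{perexp}: since $b_k=0$ for $k>N$,
\[
a_n=\sum_{k=1}^{\infty}b_k\eper_k(n)=\sum_{k=1}^{N}b_k\eper_k(n),
\]
which exhibits $(a_n)$ as a finite linear combination with integer coefficients of the elementary periodic sequences $\eper_k$. As each $\eper_k$ has period $k$, the sequence $(a_n)$ is periodic with period dividing $\lcm\{k\le N:b_k\neq 0\}$, giving both the periodicity and the claimed normal form.

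The only step requiring any care is the divisor estimate $\tau(n)\le 2\sqrt n$, used to push $|b_n|$ below $1$; everything else is bookkeeping with results already in hand. I do not expect a genuine obstacle here — the conceptual content is simply the translation of "bounded" into a decay statement for the Möbius transform $\doldB$, followed by an integrality argument upgrading decay to eventual vanishing. This is precisely the arithmetic shadow of the dynamical fact, flagged in the surrounding text, that a system with finitely many points of each period and uniformly bounded periodic-point counts has only finitely many closed orbits.
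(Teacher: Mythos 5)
Your proof is correct and takes essentially the same route as the paper: bound $|b_n|$ via the divisor count, deduce $b_n\to 0$, invoke integrality of $(b_n)$ to conclude that all but finitely many $b_n$ vanish, and read off periodicity from the expansion~\eqref{perexp}. The one small difference is that the paper invokes the asymptotic $\sigma_0(n)=\littleo(n^{\beta})$ with a reference to Hardy and Wright, whereas you use the elementary pairing bound $\tau(n)\le 2\sqrt{n}$; yours is more self-contained, but the content is the same.
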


\begin{proof}
Let~$(a_n)_n$ be a Dold sequence
with~$|a_n|<T$ for all~$n\ge1$.
Then
\[
|b_n|=\frac{1}{n}
|\sum_{k\smalldivides n}\mu(k)a_{k/d}|\leq\frac{T}{n}
\sum_{k\smalldivides n}|\mu(k)|
\leq\frac{T}{n}\sigma_0(n),
\]
where~$\sigma_0(n)$ is the
number of positive divisors of~$n$.
It is well-known that for any~$\beta>0$
we have~$\sigma_0(n)=\littleo(n^{\beta})$
(see Hardy and Wright~\cite[Sec.~18.1]{MR568909}),
so~$|b_k|\rightarrow 0$
as~$k\rightarrow \infty$.
Since~$b_n\in\Z$, we deduce
that~$b_n=0$ for all but finitely
many~$n\in\N$, completing the proof.
\end{proof}

The representation of a Dold sequence in the
form~\eqref{perexp} is called
its {\it periodic expansion}, and is useful in
many problems in periodic point theory.
For example, in the periodic expansion of the
Lefschetz numbers of iterations of Morse--Smale
diffeomorphisms, each~$(\eper_k)$ for odd~$k$
represents a periodic orbit of minimal period~$k$.
This observation allows one to easily determine
the so-called
{\it minimal set of Lefschetz  periods}~${\rm MPer}_L(f)$
for Morse--Smale  diffeomorphisms (see Graff {\it{et al.}}~\cite{MR3921437}),  studied by
Guirao, Llibre, Sirvent and other authors  (we refer to~\cite{MR3235353} and the references therein
for more on this).

The language of
periodic expansions turns out to be useful in some
dynamical problems in magnetohydrodynamics. This
applies to the flow in magnetic flux tubes, which
can be studied via
the discrete field line mapping~$\varphi$ through
the consecutive horizontal cross-sections of the tube.
The bounds for coefficients of the periodic expansion of
the fixed point
indices of iterations of~$\varphi$ at periodic points provide
a class of topological constraints sought
by the astrophysicists Yeates {\it{et al.}}~\cite{yeates}
to explain unexpected behaviour in certain
resistive-magnetohydrodynamic simulations
of magnetic relaxation.
We refer to work of Graff {\it{et al.}}~\cite{MR3846955}
for more on this.


\subsection{Other Characterizations}
\label{sectionOtherCharacterizations}

Andr\'{a}s~\cite[Rem.~2]{MR2790978}
noticed that in the definition of
the Dold
property the M\"obius function
can be replaced by the Euler totient function~$\phi$
defined as usual by~$\phi(n)=\vert\{k\mid1\le k\le n; \; \gcd(k,n)=1\}\vert$.
That is, an integer sequence~$(a_n)$ is a Dold
sequence if and only if
\begin{equation}\label{phi-Dold}
\sum_{d\smalldivides n}\phi\(\frac{n}{d}\)a_d
\equiv 0
\end{equation}
modulo~$n$ for all~$n\ge1$.
Recently, W\'ojcik~\cite{wojcik}
proved that in fact the
Dold property can be characterized
with the M\"obius function
replaced by any other integer-valued
function satisfying some natural constraints.

\begin{theorem}[W\'ojcik~\cite{wojcik}]\label{replacement}
Let~$\psi\colon\N\to\Z$ be a
function satisfying the two conditions
\begin{equation}\label{conditions-psi}
\left\{
\begin{aligned}
\psi(1)&=\pm 1;\\
\sum_{k\smalldivides n}\psi(k)&\equiv 0\pmod{n}\mbox{ for all }n\in\N.
\end{aligned}
\right.
\end{equation}
Then~$(a_n)$ is a Dold sequence if and only if
\begin{equation}\label{Doldrelpsi}
\sum_{k|n}\psi\Big(\frac{n}{k}\Big)a_{k}\equiv0
\end{equation}
modulo~$n$ for all~$n\ge1$.
\end{theorem}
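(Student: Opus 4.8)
The plan is to phrase everything through Dirichlet convolution. For integer arithmetic functions write $(f*h)(n)=\sum_{d\mid n}f(d)h(n/d)$, let $\mathbf 1$ be the constant function~$1$, and let $\delta$ be the convolution identity, so $\mu*\mathbf 1=\delta$. Viewing $(a_n)$ as an arithmetic function, the Dold condition~\eqref{equationDoldSequenceMobiusCondition} says exactly that $(\mu*a)(n)\equiv0\pmod n$ for all~$n$, and condition~\eqref{Doldrelpsi} says exactly that $(\psi*a)(n)\equiv0\pmod n$ for all~$n$. So the theorem asserts that, under the hypotheses~\eqref{conditions-psi}, these two families of congruences are equivalent.

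The key observation is that~\eqref{conditions-psi} says precisely that $g:=\psi*\mathbf 1$ (that is, $g(n)=\sum_{k\mid n}\psi(k)$) satisfies $g(1)=\pm1$ and $g(n)\equiv0\pmod n$ for all~$n$. Replacing $\psi$ by~$-\psi$ if necessary changes neither~\eqref{equationDoldSequenceMobiusCondition} nor~\eqref{Doldrelpsi} and negates~$g$, so we may assume $g(1)=1$. Let $\mathcal G$ denote the set of integer arithmetic functions $f$ with $f(1)=1$ and $f(n)\equiv0\pmod n$ for every~$n\ge1$. I would first record the elementary fact that if $f\in\mathcal G$ and $(c_n)$ is any integer sequence with $c_n\equiv0\pmod n$ for all~$n$, then $(f*c)(n)\equiv0\pmod n$ for all~$n$: in $(f*c)(n)=\sum_{d\mid n}f(d)c_{n/d}$ the summand with $d=1$ equals $c_n$, while for $d>1$ one has $d\mid f(d)$ and $(n/d)\mid c_{n/d}$, whence $n\mid f(d)c_{n/d}$. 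I would then prove that $\mathcal G$ is closed under Dirichlet inversion: $g^{-1}$ is integer-valued with $g^{-1}(1)=1$, and induction on~$n$ applied to $g^{-1}(n)=-\sum_{d\mid n,\,d>1}g(d)g^{-1}(n/d)$ shows $g^{-1}(n)\equiv0\pmod n$, since each summand is divisible by $d\cdot(n/d)=n$ once the congruence is known for~$g^{-1}(n/d)$.

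Granting these two points the theorem follows quickly. From $g=\psi*\mathbf 1$ we get $g*\mu=\psi$, hence $\psi*a=g*(\mu*a)$, and convolving with $g^{-1}$ also $\mu*a=g^{-1}*(\psi*a)$; here $g\in\mathcal G$ by the key observation and $g^{-1}\in\mathcal G$ by the closure result. If $(a_n)$ is a Dold sequence, then $c:=\mu*a$ has $c_n\equiv0\pmod n$ for all~$n$, so applying the elementary fact with $f=g$ gives $(\psi*a)(n)\equiv0\pmod n$, i.e.~\eqref{Doldrelpsi}. Conversely, if~\eqref{Doldrelpsi} holds, then $c:=\psi*a$ has $c_n\equiv0\pmod n$ for all~$n$, so applying the elementary fact with $f=g^{-1}$ gives $(\mu*a)(n)\equiv0\pmod n$, i.e.~$(a_n)$ is a Dold sequence.

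The step I expect to carry the weight is the closure of~$\mathcal G$ under Dirichlet inversion, which is exactly what makes the converse implication work; the forward implication uses only the trivial convolution estimate. Taking $\psi=\mu$, so that $g=g^{-1}=\delta$, recovers the tautology that the Dold condition is equivalent to itself, which serves as a useful sanity check on the argument.
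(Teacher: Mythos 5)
Your argument is correct, and I note that the paper does not actually present a proof of this theorem --- it is stated with a citation to W\'ojcik's preprint \cite{wojcik}, so there is no in-text argument to compare against. That preprint is titled ``Newton sequences and Dirichlet convolution,'' which strongly suggests your choice of tool matches the source. Your development is clean and complete: condition~\eqref{conditions-psi} is exactly the statement that $g=\psi\ast\mathbf 1$ lies (after the harmless sign normalisation replacing $\psi$ by $-\psi$) in the class $\mathcal G$ of integer arithmetic functions $f$ with $f(1)=1$ and $n\mid f(n)$ for all $n$; the forward implication needs only the trivial fact that $\mathcal G$ preserves the set of sequences $(c_n)$ with $n\mid c_n$ under convolution; and the converse, which is the real content, rests on the closure of $\mathcal G$ under Dirichlet inversion, which your induction via $g^{-1}(n)=-\sum_{d\mid n,\,d>1}g(d)\,g^{-1}(n/d)$ establishes correctly (each summand is divisible by $d\cdot(n/d)=n$ once the inductive hypothesis applies to $g^{-1}(n/d)$). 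The factorisations $\psi\ast a=g\ast(\mu\ast a)$ and $\mu\ast a=g^{-1}\ast(\psi\ast a)$ then finish both directions, and your sanity check with $\psi=\mu$, $g=\delta$ is a good one. I see no gaps.
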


Notice that~\eqref{conditions-psi} is satisfied
by the M\"obius function, as the sum vanishes for~$n>1$
and is~$1$ for~$n=1$,
and is satisfied by the Euler function as the sum
in that case is equal to~$n$.

\begin{remark}\label{graffmobiuslattice}
In the recent preprint~\cite{graffgeneralizeddold}
Graff {\it{et al.}}
generalize the notion of Dold sequences
to the setting of partially ordered sets.
In this approach classical Dold sequences are the
special case in which the partial order
is given by the relation of divisibility.
The M\"obius function of a
partially ordered set is an old concept,
and became a central tool in
combinatorics following Rota's seminal work~\cite{MR174487}.
We will mention later in Remark~\ref{richardthing}
how this plays a role in studying orbit growth
for group actions.
\end{remark}

\subsection{Polynomial Dold Sequences}
\label{sectionPolynomialDoldSequences}

The notion of Dold sequence has been generalized
to sequences of polynomials by Gorodetsky~\cite{MR4015520}.
For~$n\in\N$, we define
the polynomial~$[n]_q$\label{pagenqdeffinition}
in the variable~$q$ by
\[
[n]_q=\frac{q^n-1}{q-1}=1+q+\cdots+q^{n-1}\in \mathbb Z[q].
\]
Clearly~$\lim_{q\to1}[n]_q=n$, so
this may be thought of as
the~$q$-analogue of the positive integer~$n$.
Indeed, if~$a>b$ with~$a\equiv b$ modulo~$n$, then
\[
[a]_q=\frac{q^a-1}{q-1}=\frac{q^b(q^{a-b}-1)+q^b-1}{q-1}
\equiv\frac{q^b-1}{q-1}=[b]_q
\]
modulo~$[n]_q$ as a congruence in the polynomial
ring~$\Z[q]$.

\begin{definition}\label{qDold}
A sequence of polynomials~$(a_n(q))$
with terms in~$\Z[q]$ is said to satisfy the
\emph{$q$-Gauss congruences}\index{qDoldcongruences@$q$-Dold congruences}
(or \emph{$q$-Dold congruences} in our terminology)
if
\begin{equation}\label{q-congruences}
\sum_{d\smalldivides n}\mu(d)a_{n/d}(q^d)\equiv 0
\end{equation}
modulo~$[n]_q$ in~$\Z[q]$ for all~$n\in\N$.
\end{definition}

The fact that a given sequence~$(a_n(q))$
satisfies the~$q$-Dold congruences
implies that the integer
sequence~$(a_n(1))$
satisfies the Dold congruences.
As a consequence, some
polynomial techniques are then able
to be applied to
find results about integer Dold sequences.
{In particular, tools like formal
differentiation become available. We refer
to work of Guo and Zudilin~\cite{MR3910798}
and Straub~\cite{MR3896053} for more on this.}

\section{Periodic Points}
\label{sectionRealizableSequences}

\subsection{Relating periodic points to closed orbits}
\label{Relating periodic points to closed orbits}

We will mostly be concerned with a `dynamical system',
which initially means simply a map~$T\colon X\to X$
on a set~$X$.
The two fundamental notions we will deal with are
{periodic points} and {closed
orbits}.\index{dynamical!system}

\begin{definition}\label{definitionClosedOrbits}
Let~$T\colon X\to X$ be a map.
\begin{enumerate}[\rm(a)]
\item The \emph{orbit} of a
point~$x\in X$\index{orbit}
under~$T$
is the set\label{pageOrbitSet}
\[
\orbitset_T(x)=\{T^kx\mid k\in\N_0\}.
\]
If~$T^nx=x$ for some~$n\ge1$ then~$x$
is a \emph{periodic point}\index{periodic point}
and
the set~$\orbitset_T(x)$ is a\index{closed orbit}
\emph{closed orbit} of \emph{length}~$\vert\orbitset_T(x)\vert$.
We write~$\orbit_T(n)$\label{pageOrbitT(n)}
for the
number of closed orbits of length~$n\ge1$.
\item If~$T^kx=x$ for some~$k\ge1$ and~$x\in X$,
then~$x$ is a \emph{periodic point}.
The \emph{least period} (or minimal period)
of a periodic\index{least period}
point~$x$ is~$\min\{k\in\N\mid T^kx=x\}$.
We write\label{pageFixSet}
\[
\fixset_T(n)=\{x\in X\mid T^nx=x\}
\]
for the set of points fixed by the~$n$th
iterate of~$T$,
and\label{pageFixT(n)}
\[
\fix_T(n)=\vert\fixset_T(n)\vert
\]
for the number of points fixed by the~$n$th
iterate of~$T$, or (equivalently) the
number of points that are periodic with
least period dividing~$n$.
We also write~$\least_T(n)$ for the
number of points in~$\fixset_T(n)$
with least period~$n$, so~$n\orbit_T(n)=\least_T(n)$
for all~$n\ge1$.
\end{enumerate}
\end{definition}

We will always
assume that~$\orbit_T(n)<\infty$ for all~$n\ge1$,
so that all the quantities in
Definition~\ref{definitionClosedOrbits} are finite.
When we refer to a `map' or a `dynamical system', we
make this assumption,
and~$(X,T)$\label{pageXTdefined}
always means such a system.

Clearly the sequences~$\(\orbit_T(n)\)_{n\ge1}$
and~$\(\fix_T(n)\)_{n\ge1}$ determine each other.

\subsection{Dirichlet Series and Arithmetic Functions}
\label{Dirichlet Series and Arithmetic Functions}

\begin{definition}\label{definitionDirichletConvolution}
The \emph{Dirichlet convolution} of\index{Dirichlet!convolution}
functions\index{convolution}\index{g}~$f,g\colon\N\to\C$ is the
function~$f{\ast} g\colon\N\to\C$ defined by
\[
({f}{\ast}{g})(n)
=
\sum_{d\divides n}
{f}(d){g}\(\frac{n}{d}\)\!\!.
\]
\end{definition}

Convolution is commutative and associative, and the following
properties of the M{\"o}bius function are easily checked.

\begin{lemma}\label{CONVID}\index{convolution!identity}
Define the function~$\CI\colon\N\to\C$ by~$\CI(1)=1$ and~$\CI(n)=0$
for all~$n>1$.
\begin{enumerate}[\rm(a)]
\item $f{\ast}{\CI}={\CI}{\ast} f=f$
for any function~$f\colon\N\to\C$.
\item If~$f\colon\N\to\C$ has~$f(1)\neq0$,
then there is a unique
function~$g\colon\N\to\C$ such that~${f}{\ast}{g}=\CI$. This function is
denoted~${f}^{-1}$.\index{convolution!inverse}
\item The M{\"o}bius function is multiplicative,\index{multiplicative function}
meaning that~$\mu(mn)=\mu(m)\mu(n)$ if~$\gcd(m,n)=1$.
\item\label{CONVID4}  The M{\"o}bius function satisfies
\[
\sum_{d\divides n}\mu(d)
=
\begin{cases}
1&\mbox{if }n=1;\\
0&\mbox{if }n>1.
\end{cases}
\]
\item If~$u(n)=1$ for all~$n\in\N$, then~$u^{-1}=\mu$.
\item For functions~$f,g\colon\N\to\C$ the following statements are equivalent:
    \begin{enumerate}
    \item[\rm{(i)}] $\displaystyle f(n)=\sum_{d\divides n}g(d) \text{ for all } n\in \N;$
    \item[\rm{(ii)}] $\displaystyle g(n)=\sum_{d\divides n}f(d)\mu\(\frac{n}{d}\)\text{ for all } n\in \N.$
    \end{enumerate}
\end{enumerate}
\end{lemma}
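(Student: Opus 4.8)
The statement to prove is Lemma~\ref{CONVID}, which collects standard properties of Dirichlet convolution and the M\"obius function. All seven parts are classical, so the plan is simply to verify each in turn, with the only genuine content being part (f), the M\"obius inversion formula, which is then the natural workhorse for the rest.

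The plan is to proceed as follows. For part (a), unwind the definition of $f{\ast}\CI$: the sum $\sum_{d\divides n}f(d)\CI(n/d)$ has only one nonzero term, namely $d=n$, giving $f(n)$; commutativity has already been noted. For part (b), existence and uniqueness of the convolution inverse, I would construct $g$ recursively: $g(1)=1/f(1)$ forces the value at $1$, and for $n>1$ the equation $(f{\ast}g)(n)=0$ reads $f(1)g(n)+\sum_{d\divides n, d<n}f(n/d)g(d)=0$, which determines $g(n)$ uniquely in terms of values $g(d)$ with $d<n$; induction on $n$ completes it. Part (c), multiplicativity of $\mu$, follows directly from the case definition of $\mu$ via the prime factorization: if $\gcd(m,n)=1$ then the primes dividing $m$ and those dividing $n$ are disjoint, and one checks the formula in the three cases (a factor is squarefree or not, counting the number of prime factors). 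Part (d) is the key computational fact: for $n=1$ the sum is $\mu(1)=1$; for $n=p_1^{e_1}\cdots p_r^{e_r}>1$ only squarefree divisors contribute, so $\sum_{d\divides n}\mu(d)=\sum_{S\subseteq\{1,\dots,r\}}(-1)^{|S|}=(1-1)^r=0$ by the binomial theorem.

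Part (e) is then a one-line consequence of (d): $u^{-1}$ is the unique function with $u{\ast}u^{-1}=\CI$, and $(u{\ast}\mu)(n)=\sum_{d\divides n}\mu(d)$ equals $\CI(n)$ by part (d), so by the uniqueness in part (b) we get $u^{-1}=\mu$. Part (f), the M\"obius inversion formula, is the main point and I would derive it cleanly from (a), (b), (e): the hypothesis (i) says $f=u{\ast}g$ (since $f(n)=\sum_{d\divides n}g(d)=\sum_{d\divides n}g(d)u(n/d)=(u{\ast}g)(n)$), so convolving both sides on the left with $u^{-1}=\mu$ and using associativity and part (a) gives $g=\mu{\ast}f$, which is exactly statement (ii) written out; conversely convolving (ii) with $u$ recovers (i). This uses that convolution is commutative and associative, which was asserted just before the lemma.

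I do not anticipate a real obstacle here, since everything is textbook material; the one place to be slightly careful is making sure the recursion in part (b) is phrased so that part (f) can invoke it without circularity (it cannot, since (b) does not mention $\mu$), and making sure part (e)'s appeal to ``uniqueness'' in (b) is legitimate, which it is because $u(1)=1\neq0$. The mild subtlety worth stating explicitly is that in part (f) one must observe $\sum_{d\divides n}g(d)=\sum_{d\divides n}g(n/d)$ (reindexing the divisor sum by $d\mapsto n/d$) so that the sum genuinely has the shape of a Dirichlet convolution $u{\ast}g$; after that the argument is purely formal manipulation in the convolution ring.
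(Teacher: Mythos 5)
Your proof is correct and, for parts (a), (b), (c), (e), and (f), follows essentially the same route as the paper's: the recursive construction for the convolution inverse, the reduction of M\"obius inversion to the identity $f=u\ast g$ followed by convolution with $u^{-1}=\mu$, and so on. The one place you genuinely diverge is part (d): you argue directly that for $n=p_1^{e_1}\cdots p_r^{e_r}>1$ only squarefree divisors contribute, giving $\sum_{S\subseteq\{1,\dots,r\}}(-1)^{|S|}=(1-1)^r=0$ by the binomial theorem, whereas the paper instead observes that both sides of the identity are multiplicative functions of $n$ (using part (c)) and therefore reduces the verification to the prime-power case, where it is the two-term computation $\mu(1)+\mu(p)=0$. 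Both arguments are standard and equally short; yours has the minor advantage of not depending on (c), while the paper's illustrates the general ``reduce to prime powers via multiplicativity'' technique that recurs elsewhere in the subject. Your explicit appeal to the uniqueness clause of (b) to justify (e) is also slightly more careful than the paper's terse ``is simply the formula in (d) again,'' and your remark about reindexing $d\mapsto n/d$ in (f) is a sound observation that the paper leaves implicit.
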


\begin{proof}
The first statement~(a) is clear from the definition of
convolution.

The equation~$({f}{{\ast}}{g})(1)
={ f}(1){
g}(1)$ determines~${ g}(1)$.
Assuming that~${g}(k)$ has been
determined for~$1\le k<n$,
the equation
\[
({f}{\ast}{g})(n)
=
{f}(1){g}(n)
+
\sum_{1<d\divides n}{f}(d){g}\(\frac{n}{d}\)
\]
determines~${g}(n)$ uniquely, showing~(b).

Let~$m$ and~$n$ be
integers with~$\gcd(m,n)=1$,
and factorize~$m$ and~$n$ as products
of prime powers. Clearly no prime can appear
as a factor of both~$m$ and~$n$.
If any prime appears
with exponent~$2$ or more, then
both sides of the equation~$\mu(mn)=\mu(m)\mu(n)$ are
zero. If~$m$~($n$) is a product of~$k$ (resp.~$\ell$) distinct
primes, then~$mn$ is a product of~$k+\ell$ primes,
since~$m$ and~$n$ are co-prime.
Thus~$\mu(m)=(-1)^{k}$ and~$\mu(n)=(-1)^{\ell}$, so
\[
\mu(mn) = (-1)^{k+\ell} = \mu(m)\mu(n),
\]
showing~(c).

The right-hand side of~(d) is multiplicative.
We claim that the left-hand side is also
multiplicative. If~$m$ and~$n$ are
coprime, then
\[
\sum_{d\divides mn}\mu(d)=\sum_{d_1\divides m}\sum_{d_2\divides n}\mu(d_1d_2)
=\sum_{d_1\divides m}\mu(d_1)\sum_{d_2\divides n}\mu(d_2)
\]
since~$\gcd(d_1,d_2)=1$
for~$d_1\divides m$ and~$d_2\divides n$,
and~$\mu$ is multiplicative.
Since both sides of the claimed identity are multiplicative,
it is enough to verify it when~$n=p^r$, where we
have
\[
\sum_{d\divides p^r}\mu(d)=\mu(1)+\mu(p)=0,
\]
showing~(d). The statement in~(e) is
simply the formula in~(d) again.

The claim in~(f) is usually called the
\emph{M{\"o}bius inversion formula}.\index{Mobius@M\"obius!inversion}
We may write
\[
f(n)=\sum_{d\divides n}g(d)
\]
as the convolution identity~$f=g{\ast}u$.
By associativity of convolution and~(e),
we therefore have~$f{\ast}\mu=g{\ast}u{\ast}\mu=
g{\ast}\CI=g$, so
\[
g(n)=\sum_{d\divides n}f(d)\mu\(\frac{n}{d}\).
\]
The converse direction is similar: convolve~$g=f{\ast}\mu$
with~$u$.
\end{proof}

The following relationship between periodic
points and closed orbits is
highlighted in Smale's influential
survey~\cite[Sect.~I.4]{MR228014}.

\begin{lemma}\label{lemmaFundamentalRelationship}
Let~$T\colon X\to X$ be a map. Then
\begin{equation}\label{equationfixintermsoforbits}
\fix_T(n)=\sum_{d\divides n}d\orbit_T(d)
\end{equation}
and so
\begin{equation}\label{equationMobius1}
\orbit_T(n)=\frac1n \sum_{d\divides n}\mu\(\frac{n}{d}\)\fix_T(d).
\end{equation}
\end{lemma}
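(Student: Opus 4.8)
The plan is to prove the first identity~\eqref{equationfixintermsoforbits} directly by partitioning the fixed-point set~$\fixset_T(n)$, and then to deduce~\eqref{equationMobius1} formally by M\"obius inversion.

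First I would observe that if~$x\in\fixset_T(n)$, so that~$T^nx=x$, then~$x$ is periodic and its orbit~$\orbitset_T(x)$ is a closed orbit; writing~$d$ for its length, which is the least period of~$x$, the relation~$T^nx=x$ forces~$d\divides n$. Conversely, every point~$y$ lying on a closed orbit of length~$d$ with~$d\divides n$ satisfies~$T^ny=y$. Hence~$\fixset_T(n)$ is exactly the union of all closed orbits whose length divides~$n$, and distinct closed orbits are disjoint. Since a closed orbit of length~$d$ consists of exactly~$d$ points, and there are~$\orbit_T(d)$ such orbits (a finite number, by the standing assumption that~$\orbit_T(n)<\infty$ for all~$n$), counting points in this partition yields
\[
\fix_T(n)=\sum_{d\divides n}d\,\orbit_T(d),
\]
which is~\eqref{equationfixintermsoforbits}.

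For~\eqref{equationMobius1} I would invoke the M\"obius inversion formula in the form of Lemma~\ref{CONVID}(f), applied with~$f(n)=\fix_T(n)$ and~$g(d)=d\,\orbit_T(d)$. The identity just established says~$f(n)=\sum_{d\divides n}g(d)$ for all~$n\in\N$, so inversion gives~$g(n)=\sum_{d\divides n}\mu(n/d)f(d)$, that is,~$n\,\orbit_T(n)=\sum_{d\divides n}\mu(n/d)\fix_T(d)$; dividing through by~$n$ produces the stated formula.

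I do not expect any genuine obstacle. The only step needing a moment's care is the set-theoretic claim that~$T^nx=x$ holds precisely when the least period of~$x$ divides~$n$, which rests on the elementary observation that, for a periodic point~$x$, the set~$\{k\in\N\mid T^kx=x\}$ is exactly the set of positive multiples of~$\min\{k\in\N\mid T^kx=x\}$. Everything else is bookkeeping, and the finiteness hypothesis guarantees that all the sums involved are finite so that the counting argument and the inversion are legitimate.
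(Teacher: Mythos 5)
Your proof is correct and follows exactly the same route as the paper's: partition $\fixset_T(n)$ into the closed orbits of lengths dividing $n$, count points orbit by orbit to get~\eqref{equationfixintermsoforbits}, then deduce~\eqref{equationMobius1} by M\"obius inversion via Lemma~\ref{CONVID}(f). You spell out the bidirectional containment and the least-period observation a little more explicitly than the paper does, but the argument is the same.
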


\begin{proof}
Any point fixed by~$T^n$ must live on a
closed orbit of length~$d$
for some~$d$ dividing~$n$,
so~$\fixset_T(n)$ is the disjoint union of
these closed orbits, and the
number of points that live
on closed orbits of length~$d$ is~$d\orbit_T(d)$,
showing~\eqref{equationfixintermsoforbits}.
The equivalence of~\eqref{equationfixintermsoforbits}
and~\eqref{equationMobius1} follows at
once from Lemma~\ref{CONVID}(f).
\end{proof}

\begin{definition}\index{zeta function!Artin--Mazur, dynamical}
The (Artin--Mazur or dynamical) \emph{zeta
function}~$\zeta_T$\label{pageZetaFunction}
is the formal
power series defined by
\begin{equation}\label{equationDefinesZetaFunction}
\zeta_T(z)=\exp\(\sum_{n\ge1}\frac{z^n}{n}\fix_T(n)\).
\end{equation}
The \emph{dynamical Dirichlet series}~$\dirichlet_T$\index{dynamical Dirichlet series}\index{Dirichlet!dynamical series}\label{pageDynamicalDirichletSeries}
is defined by
\begin{equation}\label{equationDefinesOrbitDirichletSeries}
\dirichlet_T(s)=\sum_{n\ge1}\frac{\orbit_T(n)}{n^s}.
\end{equation}
\end{definition}

Some remarks will help to familiarize these
formal definitions.
\begin{enumerate}[\rm(a)]
\item The basic relation~\eqref{equationfixintermsoforbits}
or, equivalently,~\eqref{equationMobius1},
may be expressed just as in
Theorem~\ref{theoremEulertransforms} in terms of these two generating
functions by
thinking of the
collection of all closed
orbits for a system~$(X,T)$
as a disjoint union of individual orbits,
to give\index{Euler product}
the identity
\begin{equation}\label{equationProductOverOrbits}
\zeta_T(z)=\prod_{n\ge1}\bigl(1-z^n\bigr)^{-\orbit_T(n)}
=\prod_{\tau}\bigl(1-z^{\vert\tau\vert}\bigr)^{-1},
\end{equation}
where the product is taken over all closed orbits~$\tau$ of~$T$, and
the resulting identity\label{pageRiemannZetaFunction}
\begin{equation}\label{andtheycamefromeverywhere}
\dirichlet_T(s){\boldsymbol\zeta}(s+1)=\sum_{n\ge1}\frac{\fix_T(n)}{n^{s+1}},
\end{equation}
where~$\boldsymbol\zeta$ is the classical (Riemann) zeta function.\index{zeta function!Riemann}
\item If there is an exponential bound of the
form
\begin{equation}\label{equationCauchyHadamardRadiusOfConvergence}
\limsup_{n\to\infty}\fix_T(n)^{1/n}=R<\infty
\end{equation}
then
the dynamical zeta function is a convergent
power series with radius of
convergence~$\rho(\zeta_T)=1/R$\label{pageRadiusOfConvergence}
by the Cauchy--Hadamard theorem.\index{Cauchy--Hadamard theorem}
\item Write~$s=\sigma+\imag t$, and
hence~$\vert n^s\vert= n^{\sigma}$.
If a Dirichlet series converges absolutely for~$s_0=\sigma_0+\imag t_0$,
then it converges absolutely
for all $s$ with~$\sigma>\sigma_0$. It follows
that if~$\dirichlet_T(s)$ does converge for some~$s\in\C$
but does not converge for all~$s\in\C$, then there is
a real\label{pageDefineabscissaofconvergence}
number~$\sigma_{\text{abs}}\index{Dirichlet!series!abscissa of absolute convergence}
=\sigma_{\text{abs}}(\dirichlet_T)$ (the `abscissa of
absolute convergence') with the property that~$\dirichlet_T(s)$
converges absolutely if~$\sigma>\sigma_{\text{abs}}$ but
does not converge absolutely if~$\sigma<\sigma_{\text{abs}}$.
Thus we expect~$\dirichlet_T$ to define a
function on some right-half plane in~$\C$
if there is a polynomial bound~$\orbit_T(n)\le An^B$
on the growth in closed orbits.

\item A striking example to illustrate settings in which
the dynamical Dirichlet series may be useful comes
from
the quadratic map~$\alpha:x\mapsto1-cx^2$ on the
interval~$[-1,1]$ at the Feigenbaum
value~$c=1.401155\cdots$.
As pointed out by Ruelle~\cite{MR1920859},
this map has dynamical zeta function
\[
\zeta_{\alpha}(z)=\prod_{n=0}^{\infty}\bigl(1-z^{2^n}\bigr)^{-1}=\prod_{n=0}^{\infty}\bigl(1+z^{2^n}\bigr)^{n+1},
\]
which satisfies the Mahler functional equation~$\zeta(z^2)=(1-z)\zeta(z)$
and hence admits a natural boundary.
In contrast, Everest {\it{et al.}}~\cite{MR2550149}
point out that the dynamical Dirichlet series
is given by~$\dirichlet_{\alpha}(z)=\frac{1}{1-2^{-z}}$,
with readily understood analytic behaviour.

\end{enumerate}

\begin{remark}\label{remarkEnglandExampleStory}
The third author's
interest in the kind of questions discussed
here to some extent began with an obscure error
in a paper of England and Smith~\cite{MR307280}.
In order to show that an automorphism of a
solenoid can have an irrational zeta function,\index{zeta function!irrational}
they construct an explicit example and claim
its zeta function is
\begin{equation}\label{equationEnglandAndSmithExample}
\exp\(4z+4z^2+\sum_{k=3}^{\infty}(7^k-3^k)\frac{z^k}{k}\).
\end{equation}
Many years ago he was looking at the paper
for other reasons, and noticed that this
could not be the zeta function of any
map because~$z\mapsto\frac{1-3z}{1-7z}$ is clearly
a dynamical zeta function
(corresponding to the
sequence of coefficients~$(7^n-3^n)$,
or counting the periodic points of the map
dual to~$x\mapsto\frac{7}{3}x$ on~$\Z[\frac{1}{3}]$),
and the basic Dold relation
shows that it is not really possible to
change finitely many terms of a sequence
while preserving the property of counting periodic points for a map.
Indeed, if~$\zeta_T(z)$ is given by~\eqref{equationEnglandAndSmithExample},
then we would
have~$\(\fix_T(n)\)=\(4,8,316,2320,16564,116920,\dots\)$.
Applying the M{\"o}bius transform~\eqref{equationMobius1}
then gives
\[
\(\orbit_T(n)\)=\(4,2,104,578,3312,\tfrac{58300}{3},\dots\),
\]
which is impossible.
In fact irrational zeta functions are not only
possible but common---indeed, in a certain
sense, generic---for compact group automorphisms.
We refer to work of Everest {\it{et al.}}~\cites{MR2180241, MR1458718, MR1619569}
for more on this. Indeed, it seems that a typical
compact group automorphism will admit
a natural boundary for the zeta function;
see Bell {\it{et al.}}~\cite{MR3217030}
and the survey~\cite{MR3330348}
for more on this.
Bowen and Lanford~\cite{MR0271401}
had earlier pointed out a large class
of symbolic dynamical systems with
irrational zeta functions by constructing
uncountably many symbolic systems with distinct
zeta functions, and showing that there are
only countably many rational dynamical
zeta functions.
\end{remark}

\begin{remark}\label{richardthing}
As mentioned in Remark~\ref{graffmobiuslattice},
the fundamental congruence~\eqref{equationDoldSequenceMobiusCondition}
can be interpreted in any partially ordered set.
Miles and Ward~\cites{MR2465676,MR2650793}
used a similar extension of Lemma~\ref{lemmaFundamentalRelationship}
for closed orbits of group actions and
bounds on the values of the M\"obius function
on the lattice of finite-index subgroups of nilpotent
groups to find asymptotics for orbit growth
properties of some simple algebraic actions of
abelian and nilpotent groups.
\end{remark}

\section{Universal Dynamical Systems}
\label{A Universal Example}

Periodic points in dynamics arise in several
settings:
\begin{itemize}
\item In various classical mechanical settings,
periodic orbits represent possible repetitive motions.
For example, in the three body problem great
efforts have gone into establishing the existence
of infinitely many periodic orbits.
\item More generally, the Arnold conjecture in
symplectic geometry concerns how the additional rigidity
of symplectic manifolds impacts on the number
of closed orbits; we refer to Arnold's
famous book of problems~\cite[1972-33]{MR2078115} for
more on this important problem.
\item As invariants of suitable notions of equivalence.
For example, if~$T\colon X\to X$ and~$S\colon Y\to Y$
are homeomorphisms of compact metric spaces (`topological
dynamics') then a \emph{topological conjugacy}
between the systems~$(X,T)$ and~$(Y,S)$, meaning
a homeomorphism~$\phi\colon X\to Y$
with the property that~$\phi\circ T=S\circ\phi$,
implies that~$\fix_T(n)=\fix_S(n)$ for all~$n\in\N$.
\item As indicators of complexity. For example,
under certain natural conditions the logarithmic
growth rate of~$\fix_T(n)$ is related to
the topological entropy of~$T$.
We refer to the work of Katok~\cite{MR573822}
for foundational results of
this sort.
\item In a different
direction, work of Markus and Meyer~\cite{MR556887}
shows that certain `exotic' orbit growth patterns
that arise in one-dimensional solenoids
are a generic feature of high-dimensional Hamiltonian
dynamical systems.
\item As a product of the presence of other
phenomena via closing lemmas, specification properties,
and so on.
\item As an insight into typical behaviour in
various contexts. For example, Artin and
Mazur~\cite{MR176482}
show there is a~$C^k$-dense set of~$C^k$-maps
on a compact smooth manifold without boundary
with an exponential bound on the growth
in~$\fix(n)$ (a result later extended
and strengthened by Kaloshin~\cite{MR1726706}).
\item In contrast, Kaloshin~\cite{MR1757015}
showed that arbitrarily
fast growth in~$\fix(n)$ is Baire generic
in the space of~$C^2$ or smoother diffeomorphisms.
\item Kaloshin and Hunt~\cite{MR2276768}
showed that for a measure-theoretic
notion of generic (`prevalent') the growth
is typically only a little faster than
exponential, with a growth bound
of the shape~$\exp(Cn^{1+\delta})$.
\item Counting periodic points in algebraic
systems can lead to subtle Diophantine problems
(we refer to work of Lind~\cite{MR684244}
on quasihyperbolic toral automorphisms
and work of Everest {\it{et al.}}~\cite{MR1461206,MR2339472}
for examples; very similar problems occur also for endomorphisms of algebraic groups (and related maps) in positive characteristic~\cite{MR3894433, MR4062561}).
\end{itemize}

A less familiar question is to ask not
for what is \emph{typical} in various contexts
but to ask for what is \emph{possible}.

\subsection{Combinatorial Maps}
\label{sectionCombinatorial}

\begin{definition}\label{definitionRealisable}
A sequence~$(a_n)$ with terms in~$\N_0$ is
called \emph{realizable} if there is some\index{realizable sequence}
map~$T\colon X\to X$ with~$a_n=\fix_T(n)$
for all~$n\ge1$.
\end{definition}

It is clear that~\eqref{equationMobius1}
gives the only constraint on the
periodic points of
a bijection. That is, if~$(o_n)$ is any
sequence with values in~$\N_0$ then there
is a bijection~$T\colon X\to X$ with~$\orbit_T(n)=o_n$
for all~$n\in\N$, simply by defining~$X$
to be the disjoint union of~$o_n$ many
closed orbits of length~$n$
for each~$n\in\N$.
Thus a sequence~$(a_n)$ is
realizable if and only if
\begin{equation}\label{equationMainCondition}
\frac1n\sum_{d\divides n}\mu\(\frac{n}{d}\)a_d\in\N_0
\end{equation}
for all~$n\ge1$. Notice that this requires
two conditions:
\begin{itemize}
\item (Non-negative) The sum over divisors must be
non-negative.\index{realizable!non-negative condition}
\item (Congruence) The sum over divisors must be
divisible by~$n$ for each~$n$.\index{realizable!congruence condition}
\end{itemize}
Thus a realizable sequence is a Dold sequence
in the sense of
Definition~\ref{definitionDoldSequence},
together with a sign condition on the signed
linear combinations in~\eqref{equationMainCondition}.


Figure~\ref{figtwo} illustrates this with
(the start of) a construction
of a map with the property that
the number of closed orbits of
length~$n$ is~$n$ for each~$n\in\N$.
The set~$X$ consists of the black dots and
the map~$T$ is given by the cyclic motions illustrated
by the arrows
(and of course the figure is to be extended to
the right in a similar way).

\begin{figure}[htbp]
      \begin{center}
\scalebox{0.58}{\includegraphics{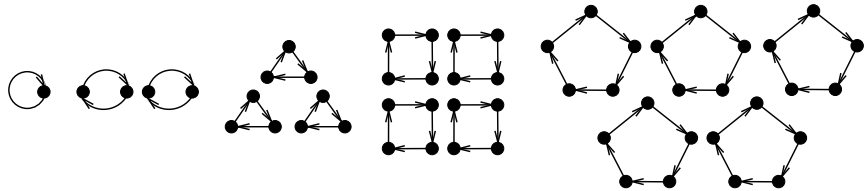}}
\caption{\label{figtwo}Building a map with~$n$ closed
orbits of length~$n$ for each~$n$.}
\end{center}
\end{figure}

\subsection{Relating Dold and Realizable Sequences}
\label{sectionRelatingDoldAndRealizableSequences}

For specific classes of dynamical systems the
relation between the Dold condition and realizability may
be quite involved, but at the level of full generality
the relationship is straightforward---and explains the
sense in which the realizable sequences may be
viewed as a positive cone in the space of Dold
sequences. The next result is shown in many places,
including work of
Du, Huang, and Li~\cite{MR1950443}
and Arias de Reyna~\cite{MR2163516}.

\begin{proposition}\label{propositionDifferencesBetweenRealizableSequences}
The following conditions are equivalent for an
integer sequence:
\begin{enumerate}[\rm(a)]
\item $(a_n)$ is a Dold sequence;
\item $(a_n)$ can be written as a difference of
two realizable sequences; and
\item\label{propositionDifferencesBetweenRealizableSequences3} for every prime~$p$ and~$n,\ell\in\N$ we have~$a_{np^{\ell}}\equiv
a_{np^{\ell-1}}$ modulo~$p^{\ell}$.
\end{enumerate}
\end{proposition}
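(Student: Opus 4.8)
\emph{Structure of the plan.} I would establish the equivalences as (a)$\Leftrightarrow$(b) and (a)$\Leftrightarrow$(c), treating the three implications (b)$\Rightarrow$(a), (a)$\Rightarrow$(b), (a)$\Leftrightarrow$(c) in turn. The first two are soft; the real content is the localization argument for (a)$\Leftrightarrow$(c).

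\emph{The two easy implications.} For (b)$\Rightarrow$(a), recall that a realizable sequence is a Dold sequence (this is the congruence part of~\eqref{equationMainCondition}) and that the defining condition~\eqref{equationDoldSequenceMobiusCondition} is linear in the sequence, so the Dold sequences form a subgroup of~$\sequences(\Z)$ under pointwise addition; hence every difference of realizable sequences is Dold. For (a)$\Rightarrow$(b), I would pass to~$(b_n)=\doldB((a_n))$, which lies in~$\sequences(\Z)$ by Lemma~\ref{lemmaConstructingDoldSequences}, split it as~$b_n=b_n^{+}-b_n^{-}$ with~$b_n^{+}=\max(b_n,0)$ and~$b_n^{-}=\max(-b_n,0)$ in~$\N_0$, and set~$a_n^{\pm}=\sum_{d\divides n}d\,b_d^{\pm}$. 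By~\eqref{equationAddedInAsInverse} the~$\doldB$-transform of~$(a_n^{\pm})$ is~$(b_n^{\pm})\in\sequences(\N_0)$, so~$(a_n^{\pm})$ satisfies~\eqref{equationMainCondition} and is therefore realizable (concretely, by Proposition~\ref{propositionDoldPeriodicDecomposition} it counts periodic points of a disjoint union of~$b_k^{\pm}$ closed orbits of length~$k$), and~$a_n=a_n^{+}-a_n^{-}$.

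\emph{Localizing the Dold condition at primes.} For (a)$\Leftrightarrow$(c) the first step is to observe, via the Chinese remainder theorem and the fact that~$n=\prod_{p\divides n}p^{v_p(n)}$ with pairwise coprime factors, that the single congruence~$n\divides\sum_{d\divides n}\mu(n/d)a_d$ for all~$n$ is equivalent to the family of congruences~$p^{\ell}\divides\sum_{d\divides n}\mu(n/d)a_d$ for all primes~$p$ and all~$n=p^{\ell}m$ with~$p\nmid m$ and~$\ell\ge1$. The second step is the key computation: because~$\mu$ vanishes on multiples of~$p^{2}$, in the sum over divisors~$d=p^{j}e$ of~$n=p^{\ell}m$ only the terms with~$j\in\{\ell-1,\ell\}$ survive, giving
\[
\sum_{d\divides n}\mu\!\left(\tfrac{n}{d}\right)a_d
=\sum_{e\divides m}\mu\!\left(\tfrac{m}{e}\right)\bigl(a_{ep^{\ell}}-a_{ep^{\ell-1}}\bigr)
\qquad(n=p^{\ell}m,\ p\nmid m,\ \ell\ge1).
\]
Thus (a) is equivalent to: for every prime~$p$, every~$\ell\ge1$ and every~$m$ coprime to~$p$, the right-hand side above is divisible by~$p^{\ell}$.

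\emph{Closing the loop.} If (c) holds, then applying it with~$n$ replaced by each divisor~$e$ of~$m$ (all coprime to~$p$) makes every summand~$a_{ep^{\ell}}-a_{ep^{\ell-1}}$ divisible by~$p^{\ell}$, so the displayed sum is too; hence (a). Conversely, assuming (a), I would fix~$p$ and~$\ell$ and prove~$p^{\ell}\divides a_{mp^{\ell}}-a_{mp^{\ell-1}}$ for all~$m$ coprime to~$p$ by strong induction on~$m$ along divisibility: for~$m=1$ the identity already reads~$p^{\ell}\divides a_{p^{\ell}}-a_{p^{\ell-1}}$, and for~$m>1$, isolating the~$e=m$ term, the identity gives
\[
p^{\ell}\divides\bigl(a_{mp^{\ell}}-a_{mp^{\ell-1}}\bigr)+\sum_{e\divides m,\ e<m}\mu\!\left(\tfrac{m}{e}\right)\bigl(a_{ep^{\ell}}-a_{ep^{\ell-1}}\bigr),
\]
where each remaining summand is divisible by~$p^{\ell}$ by induction, forcing~$p^{\ell}\divides a_{mp^{\ell}}-a_{mp^{\ell-1}}$. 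Finally, to obtain the full statement of (c), where~$n$ may itself be divisible by~$p$, write~$n=p^{k}m$ with~$p\nmid m$ and note~$a_{np^{\ell}}-a_{np^{\ell-1}}=a_{mp^{k+\ell}}-a_{mp^{k+\ell-1}}$, which we have shown is divisible by~$p^{k+\ell}$ and hence by~$p^{\ell}$. The main obstacle is the divisor bookkeeping in the displayed identity and keeping track of $p$-coprimality of indices through the induction; everything else is formal manipulation with the results already established.
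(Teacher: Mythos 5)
Your proposal is correct. The two easy implications (b)$\Rightarrow$(a) and (a)$\Rightarrow$(b) are handled exactly as in the paper: the Dold condition is closed under differences, and splitting $(b_n)=\doldB((a_n))$ into positive and negative parts $b_n^{\pm}$ and pulling back through~\eqref{equationAddedInAsInverse} gives the two realizable sequences. Where you genuinely diverge is in how you connect (c) to the other two: the paper closes the cycle via (a)$\Rightarrow$(b)$\Rightarrow$(c)$\Rightarrow$(a), proving (b)$\Rightarrow$(c) dynamically --- since realizable sequences suffice, and for a realizable sequence $\fix_T(p^{\ell}n)-\fix_T(p^{\ell-1}n)$ is a sum of terms $d\,\orbit_T(d)$ over divisors $d$ with $p^{\ell}\divides d$, hence manifestly divisible by $p^{\ell}$. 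You instead prove (a)$\Leftrightarrow$(c) directly and purely arithmetically: the Chinese remainder reduction to prime-power moduli, the exact identity $\sum_{d\divides n}\mu(n/d)a_d=\sum_{e\divides m}\mu(m/e)\bigl(a_{ep^{\ell}}-a_{ep^{\ell-1}}\bigr)$ for $n=p^{\ell}m$, $p\nmid m$ (which the paper also uses, but only as a congruence and only for (c)$\Rightarrow$(a)), and a strong induction on $m$ along divisibility for the converse. The final reduction of the general case of (c) to the $p\nmid n$ case by absorbing the extra power of $p$ matches the paper. Both routes are valid; the paper's version buys the dynamical intuition (orbit counting) at the cost of relying on (a)$\Leftrightarrow$(b), while yours is self-contained and makes (a)$\Leftrightarrow$(c) logically independent of the realizable-sequence machinery, at the cost of the extra divisor bookkeeping in the induction.
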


\begin{proof}
The defining relations~\eqref{equationMainCondition}
and~\eqref{equationDoldSequenceMobiusCondition}
make it clear that~(b) implies~(a).
Assume now that~(a) holds, and let~$b_n=\sum_{d\smalldivides n}\mu\(\frac{n}{d}\)a_d$,
so we know that~$n\divides b_n$ for all~$n\ge1$.
Let~$b_n^{\pm}=\max(0,\pm b_n)$,
so that~$b_n^{+}$ and~$b_n^{-}$ are
non-negative integers with
\[
b_n=b_n^{+}-b_n^{-}
\]
for all~$n\ge1$.
Correspondingly, define~$a_n^{\pm}=\sum_{d\smalldivides n}db_d^{\pm}$.
Then~$(a_n^{+})$ and~$(a_n^{-})$ are realizable
by~\eqref{equationMainCondition}
and~$a_n=a_n^{+}-a_n^{-}$ for all~$n\ge1$.

To see that~(b) implies~(c), it is enough to show~(c)
for realizable sequences because the congruence
condition is closed under subtraction of sequences.
So assume that~$(a_n)$ is realizable
by some system~$(X,T)$. To show~(c)
it is enough to prove that~$a_{np^{\ell}}\equiv
a_{np^{\ell-1}}$ modulo~$p^{\ell}$
under the additional assumption
that~$p\notdivides n$.
By~\eqref{equationfixintermsoforbits}
we have
\[
\fix_T(p^{\ell}n)-\fix_T(p^{\ell-1}n)
=
\sum_{d\smalldivides p^{\ell}n}d\orbit_T(d)-
\sum_{d\smalldivides p^{\ell-1}n}d\orbit_T(d)
\]
which
must be divisible by~$p^{\ell}$ as any~$d$
with~$d\divides p^{\ell}n$ but~$d\notdivides p^{\ell-1}n$
has~$p^{\ell}\divides d$, showing~(c).

Finally, assume that~$(a_n)$ satisfies~(c).
Notice that in order to show that~$(a_n)$
is a Dold sequence it is enough to prove
that
\begin{equation}\label{equationMadMed1}
\sum_{d\smalldivides n}\mu\(\frac{n}{d}\)a_d\equiv0
\end{equation}
modulo~$p^{\ell}$
for any prime~$p$ with~$p^{\ell}\Divides n$\label{pageStrongDivides}
(meaning that~$p^{\ell}\divides n$ but~$p^{\ell+1}\notdivides n$).
So assume that~$p^{\ell}\Divides n$ and~$n=p^{\ell}m$.
Then in the sum on the left-hand
side of~\eqref{equationMadMed1}
only the terms with~$p^{\ell-1}\divides d$
do not vanish. Thus, by multiplicativity
of the M{\"o}bius function,
we get
\begin{align*}
\sum_{d\smalldivides n}\mu\(\frac{n}{d}\)a_d
&\equiv
\sum_{d\smalldivides m}\mu\(\frac{m}{d}\)a_{p^{\ell}d}
-
\sum_{d\smalldivides m}\mu\(\frac{m}{d}\)a_{p^{\ell-1}d}\\
&\equiv
\sum_{d\smalldivides m}\mu\(\frac{m}{d}\)
\bigl(a_{p^{\ell}d}-a_{p^{\ell-1}d}\bigr)
\equiv0
\end{align*}
modulo~$p^{\ell}$,
showing~(a).
\end{proof}

Hence, for example, we can easily see from property \ref{propositionDifferencesBetweenRealizableSequences3} in Proposition~\ref{propositionDifferencesBetweenRealizableSequences}  that the class
of Dold sequences is closed under taking sums, differences,
and products.

\subsection{Linear Recurrence Realizable Sequences}
\label{sectionDynamicalLinearRecurrenceSequences}

Linear recurrence sequences form a
natural class of well-studied sequences
(we refer to~\cite{MR1990179} for an
overview of some of their properties
and an extensive bibliography).
The first natural direction in which to
extend Fermat's little theorem is to integer
matrices.

\begin{lemma}\label{lemmaTracesAreRealisable}
If~$A\in\matrices_{d,d}(\N_0)$
is a matrix with non-negative integer
entries, then the sequence~$\(\trace(A^n)\)$ is
realizable.
\end{lemma}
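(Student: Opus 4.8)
The natural approach is to realize $\trace(A^n)$ directly as a count of periodic points of a combinatorial map, using the standard interpretation of powers of a non-negative integer matrix as counting walks in a directed multigraph. First I would associate to $A=(a_{ij})\in\matrices_{d,d}(\N_0)$ the directed multigraph $G$ on vertex set $\{1,\dots,d\}$ having exactly $a_{ij}$ distinct edges from $i$ to $j$. A standard fact is that $\trace(A^n)$ equals the number of closed walks of length $n$ in $G$, that is, sequences of edges $(e_1,\dots,e_n)$ where the terminal vertex of $e_k$ is the initial vertex of $e_{k+1}$ (indices mod $n$). The plan is then to build a dynamical system $(X,T)$ whose points fixed by $T^n$ are precisely these closed walks of length $n$.

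\textbf{Key steps.} The cleanest construction is to take $X$ to be the space of bi-infinite (or, equivalently for counting purposes, the relevant finite-support) paths in $G$; concretely, let $X$ be the set of all bi-infinite sequences $(e_k)_{k\in\Z}$ of edges of $G$ such that consecutive edges are composable, and let $T$ be the shift $(T(e))_k = e_{k+1}$. This is the \emph{edge shift} (a subshift of finite type) determined by $A$. A point $(e_k)$ is fixed by $T^n$ exactly when the sequence is $n$-periodic, which is the same data as a closed walk of length $n$ in $G$; hence $\fix_T(n)$ equals the number of such closed walks, which is $\trace(A^n)$. One must check the standing finiteness assumption from Definition~\ref{definitionClosedOrbits}, namely that $\orbit_T(n)<\infty$ for each $n$: this holds because a closed walk of length $n$ is a finite object (an element of a finite set of size $\trace(A^n)$), so there are only finitely many $n$-periodic points and hence finitely many closed orbits of each length. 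This gives $a_n=\trace(A^n)=\fix_T(n)$ for all $n\ge1$, which is exactly what realizability requires.

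Alternatively — and perhaps more self-containedly, avoiding any appeal to subshift machinery — one can instead verify the arithmetic criterion: by Proposition~\ref{propositionDifferencesBetweenRealizableSequences} (or directly) it suffices to show that the M\"obius transform $\frac1n\sum_{d\smalldivides n}\mu(n/d)\trace(A^d)$ lies in $\N_0$. The non-negativity would follow from Burnside/orbit-counting: $\sum_{d\smalldivides n}\mu(n/d)\trace(A^d)$ counts closed walks of length exactly $n$ (i.e.\ of least period $n$), and such walks carry a free $\Z/n\Z$-action by cyclic rotation whose orbits all have size exactly $n$, so the count is a non-negative multiple of $n$. Either route works; I would present the edge-shift construction as the main argument because it exhibits an explicit map and because the same system reappears when one discusses subshifts of finite type later.

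\textbf{Main obstacle.} The only genuinely substantive point is the identification $\fix_T(n)=\trace(A^n)$, i.e.\ that the number of $n$-periodic points of the edge shift equals $\trace(A^n)$; this is the standard "$\trace(A^n)$ counts closed walks of length $n$" identity, proved by induction on $n$ (writing $(A^n)_{ii}=\sum_j (A^{n-1})_{ij}a_{ji}$ and interpreting the sum combinatorially). Everything else — composability of the shift, finiteness of periodic-point counts, $n$-periodicity of a bi-infinite path being the same as a closed walk — is routine bookkeeping. A minor subtlety worth a sentence is the degenerate case $A=0$ or isolated vertices with no edges: then $\trace(A^n)=0$ for all $n$ and the realizing system is the empty map $T\colon\emptyset\to\emptyset$, which is consistent.
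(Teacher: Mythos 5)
Your proposal is correct and follows essentially the same route as the paper: build the edge shift on the directed multigraph with adjacency matrix $A$, observe that $n$-periodic points of the shift correspond bijectively to closed walks of length $n$, and invoke the standard identity that these number $\trace(A^n)$. The Burnside-style alternative you sketch is a sound bonus but not needed; the paper also records the compactness of the shift space, though that plays no role in the counting.
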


\begin{proof}
Consider a directed graph~$\mathsf{G}=(V,E)$
comprising a
vertex set
\[
V=\{v_1,v_2,\dots,v_d\}
\]
and a set~$E$ of directed edges
between elements of~$V$
with~$a_{i,j}\in\N_0$ directed edges from the
vertex~$v_i$ to the vertex~$v_j$ for~$i$
and~$j$
in~$\{1,\dots,d\}$.
Writing~$A=(a_{i,j})$ for the resulting
adjacency matrix, the matrix~$A^n=\bigl(a_{i,j}^{(n)}\bigr)$
has~$(i,j)$th entry~$a_{i,j}^{(n)}$ equal to the
number of walks in the graph~$\mathsf{G}$ of
length~$n$ starting at vertex~$v_i$ and ending
at vertex~$v_j$. In particular,~$\trace A^n$ is
thus equal to the number of walks of length~$n$
that begin and end at the same vertex.
If we let~$X\subset E^{\Z}$ be the set of
bi-infinite walks~$(e_i)$ in the graph~$\mathsf{G}$,
then~$X$ is a compact set in the topology
inherited from the Tychonoff topology on~$E^{\Z}$,
and the left shift map~$\sigma\colon X\to X$
defined by~$\sigma\((e_i)\)=(e_{i+1})$ is a homeomorphism.
Moreover,~$\fix_{\sigma}(n)=\trace A^n$
for all~$n\ge1$ since there is a one-to-one
correspondence between sequences fixed by~$\sigma^n$
and walks of length~$n$ that start and end at
the same vertex.
\end{proof}

\begin{corollary}\label{croollaryWhereYouTheLastPersionToKnow}
Let $B$ be a square matrix with integer entries. Then the sequence~$\(\trace(B^n)\)$
is a Dold sequence.
\end{corollary}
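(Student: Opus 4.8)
The plan is to reduce the general case of an integer matrix $B$ to the case of a matrix with non-negative integer entries, where Lemma~\ref{lemmaTracesAreRealisable} already gives us that $\bigl(\trace(B^n)\bigr)$ is realizable, hence a Dold sequence. The key observation is that realizable sequences are Dold sequences, and---by Proposition~\ref{propositionDifferencesBetweenRealizableSequences}---the class of Dold sequences is closed under sums and differences; in fact that proposition already records (via its condition~\ref{propositionDifferencesBetweenRealizableSequences3}) that Dold sequences form a ring under pointwise operations.

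First I would pick a large positive integer $N$ so that $A := B + NI$ has non-negative entries (for instance $N$ larger than the absolute value of every entry of $B$, with the diagonal adjusted accordingly). Then $A \in \matrices_{d,d}(\N_0)$, so by Lemma~\ref{lemmaTracesAreRealisable} the sequence $\bigl(\trace(A^n)\bigr)$ is realizable and therefore a Dold sequence. Similarly, $NI \in \matrices_{d,d}(\N_0)$, so $\bigl(\trace((NI)^n)\bigr) = (dN^n)$ is realizable and hence also a Dold sequence. But $B = A - NI$, and the two matrices $A$ and $NI$ commute, so we may expand $B^n = (A - NI)^n$ by the binomial theorem. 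Taking traces, $\trace(B^n)$ is an integer linear combination of the quantities $\trace(A^j (NI)^{n-j}) = N^{n-j}\trace(A^j)$ for $0 \le j \le n$. This does not immediately exhibit $\bigl(\trace(B^n)\bigr)$ termwise as a $\Z$-linear combination of finitely many fixed Dold sequences, so a little more care is needed.

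The cleaner route is to avoid the binomial expansion entirely and instead argue that traces of powers of a \emph{fixed} integer matrix automatically satisfy the Dold congruence via the characteristic-polynomial recurrence, reducing to the non-negative case only through the following device. Consider the companion matrix $C$ of the characteristic polynomial of $B$; then $\trace(B^n)$ equals the power sum $p_n = \sum \lambda_i^n$ of the eigenvalues, which is the same as $\trace(C^n)$, and $C$ has integer entries. Choosing $N$ so that $C + NI$ has non-negative entries, we have $\trace((C+NI)^n) = \sum (\lambda_i + N)^n$, which is Dold by Lemma~\ref{lemmaTracesAreRealisable}; likewise the power sums of the single repeated ``eigenvalue'' contributions coming from $NI$ are Dold; and by expanding $\lambda_i^n = ((\lambda_i + N) - N)^n$ one sees that each power sum $p_n$ is, for every fixed $n$, a $\Z$-linear combination of the power sums of $(\lambda_i + N)$ with coefficients that are themselves power sums of $N$ of total degree $n$. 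Applying Proposition~\ref{propositionDifferencesBetweenRealizableSequences}\ref{propositionDifferencesBetweenRealizableSequences3} (the statement that the Dold property is equivalent to the prime-power congruences $a_{mp^{\ell}} \equiv a_{mp^{\ell-1}} \pmod{p^{\ell}}$, which only involves two indices at a time), one checks these congruences directly for $\bigl(\trace(B^n)\bigr)$ using that they hold for $\bigl(\trace((C+NI)^n)\bigr)$ and $(dN^n)$, together with the ring structure on Dold sequences noted after Proposition~\ref{propositionDifferencesBetweenRealizableSequences}.

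The main obstacle I expect is bookkeeping rather than anything deep: the naive ``$\bigl(\trace(B^n)\bigr)$ is a difference of two realizable sequences'' statement is \emph{false} in general (the terms of $\bigl(\trace(B^n)\bigr)$ need not be eventually of one sign, and realizable sequences are non-negative), so one cannot simply subtract. The honest argument must go through the congruence characterization (condition~\ref{propositionDifferencesBetweenRealizableSequences3}), which is index-by-index and hence survives the binomial manipulation, rather than through the ``positive cone'' picture. Once one commits to verifying $a_{mp^{\ell}} \equiv a_{mp^{\ell-1}} \pmod {p^{\ell}}$ for $a_n = \trace(B^n)$ by writing $B = A - NI$ with $A = B + NI \in \matrices_{d,d}(\N_0)$ commuting with $NI$, and using that the corresponding congruences hold for $\trace(A^n)$ and for $\trace((NI)^n) = dN^n$, the argument closes with a short computation. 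Alternatively---and most economically---one simply invokes that Dold sequences form a ring (the remark immediately following Proposition~\ref{propositionDifferencesBetweenRealizableSequences}): $\bigl(\trace(B^n)\bigr)$ lies in the subring generated by the Dold sequences $\bigl(\trace(A^n)\bigr)$ and the constant-in-a-suitable-sense sequences, once one expresses everything through the shifted matrix $A$ and the commuting scalar $NI$.
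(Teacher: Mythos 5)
Your proposal has a genuine gap, and the gap is precisely where you wave your hands: the ``short computation'' and the appeal to the ring structure do not close the argument. The difficulty is that the identity
\[
\trace(B^n)=\sum_{j=0}^{n}\binom{n}{j}(-N)^{\,n-j}\,\trace(A^{j}),\qquad A=B+NI,
\]
is \emph{not} a pointwise operation on sequences: both the coefficients $\binom{n}{j}(-N)^{n-j}$ and the inner indices $j$ vary with $n$, so the term $\trace(B^n)$ is built from terms of $(\trace A^m)$ at \emph{other} positions $m<n$. Closure of Dold sequences under sums, differences, and pointwise products (the ring structure you invoke after Proposition~\ref{propositionDifferencesBetweenRealizableSequences}) applies only to operations acting index-by-index, so it simply does not see this expansion. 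The same objection applies verbatim to the companion-matrix variant with eigenvalues shifted by $N$. Moreover, if you try to verify condition~\ref{propositionDifferencesBetweenRealizableSequences3} directly, you would need $A\equiv B \pmod{p^{\ell}}$, i.e.\ $p^{\ell}\mid N$; but $N$ was fixed once, so this holds for only finitely many prime powers, and the congruence for $(\trace A^n)$ does not transfer to $(\trace B^n)$ at the others.

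The idea you are missing is to let the auxiliary non-negative matrix depend on the modulus. That is what the paper does: to check the Dold congruence~\eqref{equationDoldSequenceMobiusCondition} at a fixed $n$, choose a non-negative integer matrix $A$ with $A\equiv B\pmod n$ (for instance, add a sufficiently large multiple of $n$ to every entry of $B$). Then $A^d\equiv B^d\pmod n$ for every $d$, hence $\trace(A^d)\equiv\trace(B^d)\pmod n$, and the congruence $\sum_{d\mid n}\mu(n/d)\trace(A^d)\equiv 0\pmod n$ --- which holds because $(\trace A^m)$ is realizable by Lemma~\ref{lemmaTracesAreRealisable} --- carries over term by term to $\sum_{d\mid n}\mu(n/d)\trace(B^d)\equiv 0\pmod n$. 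No binomial expansion, no ring structure, and no fixed shift $N$: the whole point is that the lifting $A$ is allowed to change with $n$.
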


\begin{proof}
Fix~$n$ and choose a non-negative matrix~$A$ with~$A\equiv B$ modulo~$n$.
Then the condition~\eqref{equationDoldSequenceMobiusCondition}
for~$A$ holds by Lemma~\ref{lemmaTracesAreRealisable},
and implies the condition~\eqref{equationDoldSequenceMobiusCondition}
for~$B$,
since all the expressions involved only involve operations
that commute with reduction modulo~$n$.
\end{proof}

We can further specialize this result to deduce congruences
of Euler--Fermat type.\index{Euler--Fermat congruence}

\begin{corollary}\label{matrixeulerfermat}
Let~$A$ denote a square matrix with integer entries.
Then
\[
\trace(A^{p^r})\equiv\trace(A^{p^{r-1}})
\]
modulo~$p^r$ for any prime~$p$ and~$r\in\N$.
\end{corollary}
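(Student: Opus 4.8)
The plan is to derive Corollary~\ref{matrixeulerfermat} directly from Corollary~\ref{croollaryWhereYouTheLastPersionToKnow} using Proposition~\ref{propositionDifferencesBetweenRealizableSequences}. First I would observe that the sequence $a_n=\trace(A^n)$ is a Dold sequence by Corollary~\ref{croollaryWhereYouTheLastPersionToKnow}. The statement to be proved is precisely the special case $n=1$ of condition~\ref{propositionDifferencesBetweenRealizableSequences3} in Proposition~\ref{propositionDifferencesBetweenRealizableSequences}, namely $a_{p^{r}}\equiv a_{p^{r-1}}$ modulo $p^{r}$ (taking $n=1$, $\ell=r$ in the notation there). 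Since Proposition~\ref{propositionDifferencesBetweenRealizableSequences} asserts that being a Dold sequence is equivalent to condition~\ref{propositionDifferencesBetweenRealizableSequences3} holding for all primes $p$ and all $n,\ell\in\N$, the corollary follows immediately by specializing $n=1$.

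Alternatively, and perhaps more self-containedly, I would give the short direct argument. Fix a prime $p$ and $r\in\N$. As in the proof of Corollary~\ref{croollaryWhereYouTheLastPersionToKnow}, choose a non-negative integer matrix $A'$ with $A'\equiv A$ modulo $p^{r}$; then $\trace((A')^{n})\equiv\trace(A^{n})$ modulo $p^{r}$ for every $n$, so it suffices to prove the congruence for $A'$, i.e.\ we may assume $A$ has non-negative integer entries. By Lemma~\ref{lemmaTracesAreRealisable} there is a dynamical system $(X,\sigma)$ with $\fix_\sigma(n)=\trace(A^{n})$ for all $n\ge1$. Then, using~\eqref{equationfixintermsoforbits},
\[
\trace(A^{p^{r}})-\trace(A^{p^{r-1}})
=\fix_\sigma(p^{r})-\fix_\sigma(p^{r-1})
=\sum_{d\divides p^{r}}d\,\orbit_\sigma(d)-\sum_{d\divides p^{r-1}}d\,\orbit_\sigma(d)
=\sum_{j=r}^{r}p^{j}\,\orbit_\sigma(p^{j})\cdot\,,
\]
wait---more carefully, the difference equals $\sum_{d\divides p^{r},\ d\nmid p^{r-1}} d\,\orbit_\sigma(d)=p^{r}\orbit_\sigma(p^{r})$, since the only divisor of $p^{r}$ not dividing $p^{r-1}$ is $p^{r}$ itself. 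Hence the difference is divisible by $p^{r}$, which is the claim.

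The main obstacle here is essentially nonexistent---this is a routine corollary---but the one point requiring a moment's care is the reduction step: that replacing $A$ by a congruent non-negative matrix is legitimate, which works because $\trace$ and matrix powers commute with reduction modulo $p^{r}$, exactly as invoked in the proof of Corollary~\ref{croollaryWhereYouTheLastPersionToKnow}. With that in hand, the identification of which divisors of $p^{r}$ fail to divide $p^{r-1}$ is immediate, and the proof is complete. I would present the direct orbit-counting version as the primary argument since it keeps the corollary close to the dynamical perspective of the surrounding section, with a parenthetical remark that it is also the case $n=1$ of Proposition~\ref{propositionDifferencesBetweenRealizableSequences}\ref{propositionDifferencesBetweenRealizableSequences3}.
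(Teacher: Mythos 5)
Your proposal is correct, and your first argument is essentially the paper's: the paper simply says the result is the Dold congruence~\eqref{equationDoldSequenceMobiusCondition} for the sequence~$(\trace(A^n))$ at~$n=p^r$, which (since~$\mu(p^r/d)\neq0$ only for~$d=p^r$ or~$d=p^{r-1}$) is exactly the~$n=1$,~$\ell=r$ case of Proposition~\ref{propositionDifferencesBetweenRealizableSequences}\ref{propositionDifferencesBetweenRealizableSequences3} that you cite. Your alternative direct argument (reduce modulo~$p^r$ to the non-negative case, realize by a shift via Lemma~\ref{lemmaTracesAreRealisable}, and note that the only divisor of~$p^r$ not dividing~$p^{r-1}$ is~$p^r$ itself) is a perfectly sound, slightly more self-contained unwinding of the same ideas.
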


\begin{proof}
This is the Dold congruence condition~\eqref{equationDoldSequenceMobiusCondition} of the sequence~$(\trace(A^n))$ applied to~$n$ being a prime power.
\end{proof}


This kind of reasoning may be used to find
non-trivial congruences; we refer to
work of Arias de Reyna~\cite{MR2163516} and
Sun~\cite{MR2887613} for examples.

Minton's result in Theorem~\ref{theoremMinton}
gives a complete description of linear recurrence
Dold sequences. This gives partial information about
realizable linear recurrence sequences, but not
complete information because of the sign conditions
and possible rational multiplier.
To see how this looks in a simple setting, we
describe in Example~\ref{exampleYash1}
one of the early results in this direction,
where a complete picture is readily found.

{Sign conditions for linear recurrence sequences
present difficulties in several directions.
There are many ways to see that the absolute
value of a linear recurrence sequence is not in general
a linear recurrence; the next example was shown to
us by Jason Bell.

\begin{example}\label{exampleBell1}
Let~$a_n=1+n\imag$ for all~$n\ge1$,
so that the
sequence~$(a_n)$
satisfies the
linear recurrence
relation~$a_{n+2}=2a_{n+1}-a_n$ for all~$n\ge1$.
Then we have~$|a_n|=\sqrt{1+n^2}$,
so the sequence~$(\vert a_n\vert)$
cannot satisfy a linear
recurrence relation
because the terms of a linear recurrence sequence
must lie in a finitely generated field extension
of the rationals and
the extension~$\Q(\sqrt{1+n^2}\mid n\in\N)/\Q$
is not finitely generated.
\end{example}

Clearly, if a sequence has constant sign, then taking
its absolute value preserves the property of being a linear
recurrence sequence. The same holds under the weaker assumption
that the signs change periodically. As it turns out, for real-valued sequences this condition is also necessary.

\begin{lemma}\label{lemabslr}
Let~$(a_n)$ be a linear recurrence sequence with real values.
Then~$(|a_n|)$ is also a linear recurrence sequence
if and only if the sequence of signs~$(\mathrm{sgn}(a_n))$ with
values in~$\{-1,0,1\}$ is {ultimately} periodic.
\end{lemma}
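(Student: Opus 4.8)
The plan is to prove both implications, with the backward direction being essentially trivial and the forward direction requiring the structure theory of linear recurrence sequences over $\R$. For the backward direction, suppose the sign sequence $(\mathrm{sgn}(a_n))$ is ultimately periodic with period $N$ (ignoring finitely many initial terms). On each residue class $n\equiv r\pmod N$ the sign is eventually constant, say $\epsilon_r\in\{-1,0,1\}$, so $|a_n|=\epsilon_r a_n$ for large $n$ in that class. The sequence $(\epsilon_r a_n)_{n\equiv r}$ is a linear recurrence sequence (a constant times the restriction of $(a_n)$ to an arithmetic progression, both operations preserving the linear recurrence property), and a sequence that agrees on each of finitely many arithmetic progressions with a linear recurrence sequence, up to finitely many terms, is itself a linear recurrence sequence. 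Hence $(|a_n|)$ satisfies a linear recurrence.

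For the forward direction, assume $(|a_n|)$ is a linear recurrence sequence. Both $(a_n)$ and $(|a_n|)$ then satisfy a common linear recurrence, so the product sequence $(a_n|a_n|)$ and the sequence $(a_n^2)=(|a_n|^2)$ are linear recurrence sequences as well. Now I would invoke the Skolem--Mahler--Lech theorem: the zero set $\{n : a_n=0\}$ of a linear recurrence sequence is a finite union of arithmetic progressions together with a finite set. On the complement of this zero set, consider the sequence $c_n = a_n|a_n|/a_n^2 = \mathrm{sgn}(a_n)$; wherever $a_n\neq 0$ this equals the quotient of two linear recurrence sequences. The key point is that $(a_n^2)$ has exactly the same zero set as $(a_n)$, so $c_n$ is a well-defined ratio of linear recurrence sequences off a finite union of arithmetic progressions and a finite set, and it takes only the values $\pm 1$ there.

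The main obstacle—and the technical heart of the argument—is showing that a $\{\pm1\}$-valued sequence arising this way must be ultimately periodic on each of the relevant arithmetic progressions. The clean tool here is the Hadamard quotient theorem of van der Poorten (building on work of Pourchet and Rumely): if $(u_n)$ and $(v_n)$ are linear recurrence sequences over a field of characteristic zero and $v_n \mid u_n$ in a suitable ring for all $n$ (here, the quotient $u_n/v_n$ takes values in $\Z$, indeed in $\{-1,0,1\}$), then $(u_n/v_n)$ is itself a linear recurrence sequence. Applying this with $u_n=a_n|a_n|$ and $v_n=a_n^2$ on each arithmetic progression where $a_n$ is nonzero, we conclude $(\mathrm{sgn}(a_n))$ is a linear recurrence sequence on that progression; but a linear recurrence sequence taking only finitely many values (here $\{-1,1\}$) is ultimately periodic—its generalized power sum representation forces the dominant roots to be roots of unity once the sequence is bounded and integer-valued, or more directly one applies the pigeonhole principle to the shift orbit in the finite-dimensional recurrence state space. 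Piecing the progressions back together (and absorbing the Skolem--Mahler--Lech zero set, where $\mathrm{sgn}(a_n)=0$ periodically) yields that $(\mathrm{sgn}(a_n))$ is ultimately periodic on all of $\N$, completing the proof. The only subtlety to flag is ensuring the Hadamard quotient theorem applies even though $u_n/v_n$ is only defined off the zero locus; this is handled by restricting to each arithmetic progression disjoint from the zero set, on which both $(u_n)$ and $(v_n)$ remain linear recurrence sequences.
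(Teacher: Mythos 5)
Your proof is correct and takes essentially the same route as the paper: both directions rest on the Hadamard quotient theorem applied to $|a_n|/a_n$ together with the Skolem--Mahler--Lech theorem. You are somewhat more careful than the paper about the point that the Hadamard quotient theorem is only directly applicable once the zero set of the denominator has been removed---you pre-decompose $\N$ into arithmetic progressions avoiding $\{n : a_n = 0\}$ via Skolem--Mahler--Lech and apply Hadamard per-progression, whereas the paper applies Hadamard somewhat informally to the subsequence indexed by $n\notin J$ and then invokes Skolem--Mahler--Lech to describe the sign sets; the two arrangements of the same ingredients are equivalent, and yours makes the handling of zeros more transparent.
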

\begin{proof}
Assume that~$(|a_n|)$
is a linear recurrence, and consider the sets
\[
J_-=\{n\in\N \mid a_n <0\},\qquad J=\{n\in\N \mid a_n =0\},\qquad J_+=\{n\in\N \mid a_n > 0\}.
\]
For~$n\not\in J$, the sequence~$\mathrm{sgn}(a_n)=|a_n|/a_n$
can be written as a quotient of two linear recurrence sequences.
Since this sequence takes values~$\pm 1$,
we may apply the
Hadamard quotient theorem~\cite[Th.~4.4]{MR1990179}
to see that there is a linear
recurrence sequence~$(b_n)$ with~{$\mathrm{sgn}(a_n)=b_n$}
for all~$n\notin J$.
By the Skolem--Mahler--Lech theorem~\cite[Th.~2.1]{MR1990179},
the sets~$J_-$,~$J$, and~$J_+$ are unions of finite sets
and finitely many arithmetic progressions. It follows that~$(\mathrm{sgn}(a_n))$ is {ultimately} periodic\end{proof}

The problem of determining
if a given linear recurrence sequence of algebraic
numbers has positive
terms matters in multiple fields, and
the decidability and complexity (in the sense
of computer science) of this question
has a long history. Mignotte
{\it{et al.}}~\cite{MR743965}
and Vereshchagin~\cite{MR808885}
independently used Diophantine analysis to show
the problem is decidable for linear recurrence
sequences of degree no more than~$4$, and
Ouaknine and Worrell~\cite{MR3238382} brought in additional
tools from number theory and real algebraic
geometry to prove decidability and establish
the complexity of the question for
linear recurrence sequences over the
integers of degree
up to~$9$, under the assumption that there
are no repeated
characteristic roots. We refer to their
paper~\cite{MR3238382} for more of the history,
and for additional references.}

\begin{example}[Fibonacci recurrences]\label{exampleYash1}
Consider the sequence
\[
(1,a,1+a,1+2a,2+3a,3+5a,\dots)
\]
for~$a\in\N_0$. For which values of~$a$ is
this realizable?
\begin{itemize}
\item If~$a=0$ the sequence begins~$(1,0,1,1,2,3,5,\dots)$.
If this is realised by some~$(X,T)$ then~$\orbit_T(1)=1$
(because there is one fixed point), which means
that~$\fix_T(2)=\orbit_T(1)+2\orbit_T(2)=0$ is impossible.
\item If~$a=1$ the sequence begins~$(1,1,2,\dots)$
which is impossible as~$\fix_T(3)-\fix_T(1)$ must
be divisible by $3$.
\end{itemize}
\end{example}

{Theorem~\ref{theoremMinton}
shows that the space of all Dold solutions
of this recurrence has rank one, which means
the space of realizable solutions must have
rank zero or one. In fact it has rank one,
and there is a complete description.
Of course this result is an easy consequence
of Minton's Theorem~\ref{theoremMinton}, because
in this case the sign issues are easy,
but we include a proof to illuminate the sort
of issues that come into play in a simple
example.}

\begin{lemma}[Puri {\&} Ward~\cite{MR1866354}]\label{theoremFibonacciCondition}
The sequence
\begin{equation}\label{equationUsingaToTestForRealisability}
u=(u_n)=(1,a,1+a,1+2a,2+3a,3+5a,\dots)
\end{equation}
is realizable if and only if~$a=3$.
\end{lemma}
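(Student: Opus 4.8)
The plan is to handle the two implications separately; the case $a=3$ is a one-line appeal to Lemma~\ref{lemmaTracesAreRealisable}, and all the content is in the converse.

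\emph{The case $a=3$.} Here the sequence is $(1,3,4,7,11,18,\dots)$, the Lucas numbers. I would realize it as a trace sequence: let $A=\bigl(\begin{smallmatrix}1&1\\1&0\end{smallmatrix}\bigr)$, so $\trace(A)=1$, $\trace(A^2)=3$, and since $A^2=A+I$ (Cayley--Hamilton) we get $\trace(A^{n+2})=\trace(A^{n+1})+\trace(A^n)$; hence $(\trace(A^n))_{n\ge1}$ is exactly $u$ with $a=3$. As $A$ has non-negative integer entries, Lemma~\ref{lemmaTracesAreRealisable} gives realizability.

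\emph{The converse.} Suppose $u$ is realizable; then $u$ is a Dold sequence, and so is the $a=3$ sequence $L=(L_n)$ by the previous paragraph. With the Fibonacci numbers normalized by $F_0=0$, $F_1=1$, a trivial induction gives $u_n=F_{n-2}+aF_{n-1}$ and $L_n=F_{n-2}+3F_{n-1}$, so
\[
u_n-L_n=(a-3)F_{n-1}\qquad(n\ge1).
\]
Dold sequences are closed under differences (remark following Proposition~\ref{propositionDifferencesBetweenRealizableSequences}), so $\bigl((a-3)F_{n-1}\bigr)_{n\ge1}$ is a Dold sequence. Writing $c=a-3$, the Dold congruence~\eqref{equationDoldSequenceMobiusCondition} for this sequence at a prime $p$ is $cF_{p-1}-cF_0\equiv0\pmod p$, i.e.\ $p\mid cF_{p-1}$. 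I would then invoke the classical fact that $p\nmid F_{p-1}$ whenever $5$ is a quadratic non-residue mod $p$ (that is, $p\equiv\pm2\pmod5$): in that case $p\mid F_{p+1}$, while $\gcd(F_{p-1},F_{p+1})=F_{\gcd(p-1,p+1)}=F_2=1$. There are infinitely many such primes, so if $c\neq0$ we could take one with $p>\lvert c\rvert$ and deduce the impossible $p\mid c$. Hence $c=0$ and $a=3$.

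\emph{Where the difficulty sits.} The only non-bookkeeping ingredient is that $F_{p-1}$ is a unit modulo $p$ for infinitely many primes $p$. I would derive it from the rank-of-apparition divisibility $p\mid F_{p-(\frac5p)}$, the gcd identity above, and the infinitude of primes with $\bigl(\tfrac5p\bigr)=-1$; the last point is an instance of Dirichlet's theorem but is also elementary (an integer $\equiv2\pmod5$ has a prime factor $\equiv\pm2\pmod5$, and Chinese remaindering yields such integers coprime to any prescribed finite set of primes). Strictly, the argument only needs that $p\mid F_{p-1}$ fails infinitely often, so pinning down the residue classes is optional. The Fibonacci identity for $u_n-L_n$ and the passage from realizability to the Dold congruences are immediate from material already in the paper.
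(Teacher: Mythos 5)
Your proof is correct, and it follows the same basic route as the paper: both proofs exploit primes~$p\equiv\pm2\pmod5$, both use the identity~$u_n=F_{n-2}+aF_{n-1}$ and the classical fact~$p\mid F_{p+1}$ for such~$p$, and both invoke the infinitude of such primes to pin down~$a$. The organizational difference is worth noting. The paper works directly with the Dold congruence at~$n=p$ for~$(L_n)$ and~$(u_n)$ separately, computing the explicit residues~$F_{p-1}\equiv1$ and~$F_{p-2}\equiv-2\pmod p$ before extracting~$a\equiv3\pmod p$. You instead subtract the two sequences first, so that the Dold closure property immediately hands you a new Dold sequence~$((a-3)F_{n-1})$; the prime-$p$ congruence then becomes the single divisibility~$p\mid(a-3)F_{p-1}$, and you only need~$F_{p-1}$ to be a \emph{unit} modulo~$p$, not to compute its residue. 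Your derivation of that weaker fact from the gcd identity~$\gcd(F_m,F_n)=F_{\gcd(m,n)}$ and~$p\mid F_{p+1}$ is clean and bypasses the small identity chase~$F_{p+1}=2F_{p-1}+F_{p-2}$ that the paper goes through. Net effect: same prime family, same Lucas/Fibonacci input, but your reduction to a pure divisibility statement by subtracting off the realizable~$a=3$ solution is a genuine simplification that makes the structure of the argument more transparent.
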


\begin{proof}
Lemma~\ref{lemmaTracesAreRealisable} applied to
the matrix
\[
A=\bmatrix1&1\\1&0\endbmatrix
\]
gives a system~$(X,T)$ with~$\fix_T(n)=L_n$
for~$n\ge1$, where~$L_n$\label{pageLucasNumbers}
denotes the~$n$th
term of the Lucas sequence~$(1,3,4,7,\dots)$.\index{Lucas sequence}
Thus the sequence~\eqref{equationUsingaToTestForRealisability}
is indeed realizable if~$a=3$.

For the converse direction,
write~$F=(F_n)=(1,1,2,3,5,8,\dots)$
for the Fibonnaci numbers.\index{Fibonacci number}\label{pageFibonacciNumbers}
Since~$(L_n)$ is realizable,
we have
\[
\sum_{d\divides n}\mu\(\frac{n}{d}\)L_d\equiv0
\]
modulo~$n$
for all~$n\ge1$.
For~$n=p$ a prime, it follows
that
\begin{equation}\label{equationCoolLucasThing}
L_p
=
F_{p-2}+3F_{p-1}
\equiv L_1=1
\end{equation}
modulo~$p$ (the first
equality is an easy induction).
It follows from~\eqref{equationCoolLucasThing}
that
\begin{equation}\label{equationSecondCoolLucasThing}
F_{p-1}\equiv1
\Longleftrightarrow
F_{p-2}\equiv-2
\end{equation}
modulo~$p$ for any prime~$p$.
The Fibonacci numbers
satisfy~$F_{p+1}\equiv0$ modulo~$p$
if~$p$ is a prime congruent to~$\pm2$ modulo~$5$
(see~\cite[Th.~180]{MR568909}).
Let~$p$ be any such prime.
The resulting identities~$
F_{p+1}
=
F_{p}+F_{p-1}
=
2F_{p-1}+F_{p-2}
\equiv 0
$
modulo~$p$
and~\eqref{equationCoolLucasThing}
together imply that~$F_{p-1}\equiv1$ modulo~$p$
if~$p\equiv\pm2$ modulo~$5$.

Assume now that~\eqref{equationUsingaToTestForRealisability}
is realizable. Then~$u_p\equiv u_1=1$
modulo~$p$ for any prime~$p$.
Now~$u_n=F_{n-2}+aF_{n-1}$
for~$n\ge1$, so~$F_{p-2}+aF_{p-1}\equiv 1$
modulo~$p$.
If~$p\equiv\pm2$ modulo~$5$
we thus have
\begin{equation}\label{eight}
\left(F_{p-2}-1\right)+a\equiv0
\end{equation}
modulo~$p$, since~$F_{p-1}\equiv1$.
For such~$p$,~\eqref{equationSecondCoolLucasThing}
gives~$F_{p-2}\equiv -2$ modulo~$p$,
so~\eqref{eight}
gives~$a\equiv 3$
modulo~$p$.
By Dirichlet's theorem\index{Dirichlet's theorem}
there are infinitely many primes~$p$
congruent to~$\pm2$ modulo~$5$, so~$a=3$ as claimed.
\end{proof}





Extending this line of reasoning gives the following
picture. For a given (homogeneous) linear recurrence\index{linear recurrence!relation}
relation there will be some
initial conditions that give realizable
sequences. There is always one such
sequence because~$(0,0,0,\dots)$ is
realizable. Since any natural number multiple
of a realizable sequence is also
realizable (by taking disjoint unions
of a given system with itself), we can
ask for the `rank' or `dimension' of the
realizable subspace, meaning the
number of linearly independent realizable
solutions.\index{realizable!subspace}
Lemma~\ref{theoremFibonacciCondition}
shows that there is a one-dimensional
subset of realizable sequences satisfying
the Fibonacci recurrence relation.

A more involved argument along the
same lines using special primes gives
the following result,
originally shown by Puri~\cite{yash}
using other methods.
{Once again Theorem~\ref{theoremMinton} gives an
upper bound on the dimensions, but other arguments
are needed to give exact values or lower bounds.}

\begin{theorem}[Everest {\it{et al.}}~\protect{\cite[Th.~2.1]{MR1938222}}]\label{theoremEPPWunprivedhere}
Let~$\Delta$ denote the discriminant of the
characteristic polynomial\index{linear recurrence!relation}\index{linear recurrence!non-degenerate}
of a non-degenerate binary recurrence relation.
Then the realizable
subspace has
\begin{enumerate}[\rm(a)]
\item dimension~$0$ if~$\Delta <0$,
\item dimension~$1$ if~$\Delta=0$ or~$\Delta > 0$
and non-square,
\item dimension~$2$ if~$\Delta >0$ is a square.
\end{enumerate}
\end{theorem}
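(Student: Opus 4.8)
The plan is to analyze the recurrence via its characteristic roots and the congruence condition~\ref{propositionDifferencesBetweenRealizableSequences3} of Proposition~\ref{propositionDifferencesBetweenRealizableSequences}, following the pattern already used in Lemma~\ref{theoremFibonacciCondition}. Write the binary recurrence as~$a_{n+2}=\alpha a_{n+1}+\beta a_n$ with~$\alpha,\beta\in\Z$, characteristic polynomial~$x^2-\alpha x-\beta$, roots~$\lambda,\mu$, and discriminant~$\Delta=\alpha^2+4\beta$. Non-degeneracy means~$\lambda/\mu$ is not a root of unity (in particular~$\beta\neq0$ and the roots are distinct, so~$\Delta\neq0$ in the generic case—though we must also treat~$\Delta=0$ separately). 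A general solution is~$a_n=A\lambda^n+B\mu^n$ with~$A,B$ determined by the initial conditions~$a_1,a_2$. By Theorem~\ref{theoremMinton}, the Dold solutions form a space of trace sequences, and a dimension count on the~$\Q$-vector space of Dold solutions gives the upper bounds: the realizable subspace sits inside the Dold subspace (a realizable sequence is Dold by Proposition~\ref{propositionDifferencesBetweenRealizableSequences}), and since realizability is preserved under taking~$\N_0$-combinations, the realizable sequences span a subspace whose dimension is at most that of the Dold subspace.

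Next I would pin down the three cases. When~$\Delta>0$ is a perfect square, both roots~$\lambda,\mu$ are rational integers (since~$\alpha,\beta\in\Z$ and~$\sqrt\Delta\in\Z$), so~$(\lambda^n)$ and~$(\mu^n)$ are, up to non-negativity, built from~$\trace$ of a~$1\times1$ matrix and Lemma~\ref{lemmaTracesAreRealisable} applies; one checks that suitable~$\N_0$-combinations of~$(\lambda^n)$ and~$(\mu^n)$ are realizable and independent, giving dimension~$2$. When~$\Delta>0$ is positive but not a square, the roots are conjugate real quadratic irrationals~$\lambda,\mu=\bar\lambda$; here~$(\lambda^n+\mu^n)=(\trace_{\K\mid\Q}(\lambda^n))$ for~$\K=\Q(\sqrt\Delta)$ is a trace sequence, realizable after scaling (via Lemma~\ref{lemmaTracesAreRealisable} applied to an integer matrix with this trace sequence, e.g.\ the companion matrix reduced appropriately), but no solution involving~$\lambda^n$ alone can be realizable since realizable sequences are integer-valued, which forces the~$\mu^n$ coefficient to be the conjugate of the~$\lambda^n$ coefficient—this cuts the dimension to~$1$. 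When~$\Delta<0$, the roots are genuinely complex (non-real) conjugates, and then the argument must show that \emph{no} nonzero solution is realizable: the point is that a realizable sequence is non-negative, but~$a_n=A\lambda^n+\bar A\bar\lambda^n=2|A|\,r^n\cos(n\theta+\phi)$ with~$\theta\notin\pi\Q$ (by non-degeneracy) is equidistributed in sign by Weyl, so it takes negative values infinitely often unless~$A=0$. The case~$\Delta=0$ gives a repeated root~$\lambda=\alpha/2\in\tfrac12\Z$; non-degeneracy forces~$\lambda$ to be a nonzero integer (else~$\lambda/\mu=1$ is excluded but the repeated-root solutions~$(\lambda^n, n\lambda^n)$ still need handling), and the solution~$(c_1+c_2 n)\lambda^n$ is realizable only for the one-parameter family where the~$n\lambda^n$ term is absent (the~$n\lambda^n$ piece violates the congruence condition, as in the~$a=1$ case of Example~\ref{exampleYash1}), giving dimension~$1$.

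The main obstacle I anticipate is establishing the \emph{lower} bounds—exhibiting enough genuinely realizable solutions—rather than the upper bounds, which follow cleanly from Theorem~\ref{theoremMinton}. Concretely, in the square-discriminant case one must produce two independent realizable sequences, and in the positive-non-square and zero cases one must produce one nonzero realizable sequence, which requires not just that the relevant trace sequence satisfies the Dold congruences (automatic) but that it is \emph{non-negative}, or that some explicit~$\N_0$-combination with the always-realizable sequences~$(\trace A^n)$ for non-negative~$A$ lands in the positive cone. This is exactly the kind of sign bookkeeping flagged in the remarks after Theorem~\ref{theoremMinton}: the cleanest route is to realize, for each case, a specific non-negative integer matrix~$A$ (of size~$1$ or~$2$) whose trace sequence~$(\trace A^n)$ lies on the recurrence and is not a multiple of a previously-found solution, invoking Lemma~\ref{lemmaTracesAreRealisable}; the~$\Delta<0$ case instead needs the Weyl-equidistribution argument to kill all candidates, and I expect that step—quoting non-degeneracy to get~$\theta/\pi$ irrational—to need the most care.
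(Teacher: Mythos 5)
The paper itself does not prove Theorem~\ref{theoremEPPWunprivedhere}; it only cites it to Everest {\it{et al.}}\ and records afterwards (following Example~\ref{tribonacci}) that the original proof ``rests on inert primes'', in the spirit of the inert-prime computation carried out in full for Lemma~\ref{theoremFibonacciCondition}. So there is no internal proof to compare against, and your proposal is really a substitute argument. As such it takes a genuinely different route: you invoke Minton's Theorem~\ref{theoremMinton} to cap the Dold subspace at dimension~$1$ (for~$\Delta$ non-square or zero) or~$2$ (for~$\Delta>0$ square), then rely on the sign condition in~\eqref{equationMainCondition} plus Weyl equidistribution for the~$\Delta<0$ case. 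This is cleaner and more modern than the cited approach---once Minton's~$2014$ characterisation is available, the upper bound becomes nearly free, whereas the original~$2002$ proof had to establish that upper bound by hand via inert primes (exactly what the argument~\eqref{equationCoolLucasThing}--\eqref{eight} does for the Fibonacci recurrence). The tradeoff is that Minton's theorem is itself a deep input, whereas the inert-prime argument is elementary and self-contained.

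The genuine gap is in the lower bounds, which you flag yourself but do not close, and in fact the gap is larger than your sketch suggests because of the sign of the characteristic roots. Minton gives a Dold-dimension bound that is the \emph{same} for~$\Delta<0$ and~$\Delta>0$ non-square (dimension~$1$ in both cases), so all of the work of distinguishing cases~(a) and~(b) sits in the non-negativity condition. Your Weyl argument disposes of~$\Delta<0$, but for~$\Delta>0$ and~$\Delta=0$ you need to actually exhibit a realizable (i.e.\ non-negative, not merely Dold) solution, and this fails for some admissible recurrences: if both roots are negative (e.g.\ $\lambda=-2$,~$\mu=-3$, discriminant~$\Delta=1$, a positive square; or~$\Delta=0$ with~$\lambda<0$), then every nonzero rational solution changes sign, so under the literal definition of realizability the subspace is trivial, not~$2$- or~$1$-dimensional. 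This is not a flaw in your reasoning alone---it signals that the statement, as abbreviated in the survey, needs an extra hypothesis (positivity of a dominant root, or the ``absolute value realizable'' formulation used in Theorem~\ref{rankrestriction}); the cited source should be consulted for the precise form. Separately, your treatment of the~$\Delta=0$ case conflates the non-degeneracy condition with the repeated-root situation: when~$\Delta=0$ the only characteristic root is~$\lambda=\alpha/2\in\Z$ (forced by integrality of~$\beta=-\lambda^2$), non-degeneracy says nothing about~$\lambda/\mu$, and the reason the~$(n\lambda^n)$ direction is excluded is a direct Dold-congruence check at primes~$p\nmid\lambda$, as you correctly intuit but do not state cleanly.
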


The
fact that realizable sequences can be
multiplied by natural numbers and added
while remaining realizable
gives the following illustration of
case~(c):
any sequence~$u_n=a2^n+b$
with~$a,b\in\N$ is realizable and
satisfies
the recurrence relation~$u_{n+2}=3u_{n+1}-2u_n$
for all~$n\in\N$.

The proofs of the definitive
Lemma~\ref{theoremFibonacciCondition}
and Theorem~\ref{theoremEPPWunprivedhere}
rely on special properties of quadratics.
The general picture in higher degree
is much less clear, and new phenomena arise.
A partial result with a slightly more
permissive definition is found by
Everest {\it{et al.}}

\begin{theorem}[Everest {\it{et al.}}~\protect{\cite[Th.~2.3]{MR1938222}}]\label{rankrestriction}
Let~$f$ be the
characteristic polynomial\index{linear recurrence!characteristic polynomial}
of a
non-degenerate linear recurrence
sequence with integer coefficients.
If~$f$ is separable
with~$\ell$ irreducible
factors
and a dominant root, then the dimension of the
subspace
of solutions of the recurrence
{with the property that their absolute
value is realizable} cannot
be more than~$\ell$. If~$f(0)\neq 0$
then equality holds if either
the dominant
root is not less than the sum of the absolute values of
the other roots or
the dominant root
is strictly greater than the sum of the absolute values of
its conjugates.
\end{theorem}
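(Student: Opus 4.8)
The plan is to treat the two inequalities $\dim\le\ell$ and $\dim\ge\ell$ separately, after a common reduction. Factor $f=f_{1}\cdots f_{\ell}$ over $\Q$ into distinct monic irreducibles, choose a root $\theta_{i}$ of $f_{i}$, and set $K_{i}=\Q(\theta_{i})$, $d_{i}=[K_{i}:\Q]$. Since $f$ is separable its roots are simple, so the space $V$ of solutions of the recurrence is spanned over $\C$ by the sequences $(\rho^{n})_{n}$ as $\rho$ runs over the roots of $f$, and $\dim_{\C}V=\sum_{i}d_{i}$. As $f$ has integer coefficients its non-real roots occur in complex-conjugate pairs of equal modulus, so the dominant root $\lambda$ is real; take it to be a root of $f_{1}$. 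A solution whose absolute value is realizable is automatically $\Z$-valued, hence a rational solution, so it can be written uniquely as $a_{n}=\sum_{i=1}^{\ell}\ftrace_{K_{i}|\Q}(\gamma_{i}\theta_{i}^{n})$ with $\gamma_{i}\in K_{i}$; a priori this is an $(\sum_{i}d_{i})$-parameter family, and the content of the theorem is that realizability collapses it. The basic rigidity, used throughout, is this: if such an $(a_{n})$ has a nonzero leading term $c\rho^{n}$ with $\rho$ a \emph{real} top-modulus characteristic root, then $\mathrm{sgn}(a_{n})$ is ultimately $\mathrm{sgn}(c)\mathrm{sgn}(\rho)^{n}$, hence ultimately periodic, so by Lemma~\ref{lemabslr} the sequence $(|a_{n}|)$ is a linear recurrence; since realizable sequences are Dold (Proposition~\ref{propositionDifferencesBetweenRealizableSequences}), if $(|a_{n}|)$ is realizable then Minton's Theorem~\ref{theoremMinton} forces it to be a trace sequence. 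The bounded case $\lambda\le1$ (all roots of modulus at most $1$, all solutions bounded) is handled separately through Lemma~\ref{lemmaDoldPeriodicIffBounded}, so we may assume $\lambda>1$.

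For the upper bound I would peel off characteristic roots in decreasing order of modulus, in the spirit of the special-primes arguments of Lemma~\ref{theoremFibonacciCondition} and Theorem~\ref{theoremEPPWunprivedhere}, but organised by Galois blocks. At each stage consider the factors $f_{i}$ possessing a root of maximal current modulus. If one such top root $\rho$ is non-real, then a rational solution with a nonzero coefficient on $\rho$ has a genuinely oscillatory leading term $r^{n}\cos(n\vartheta+\psi)$ with $\vartheta/\pi$ irrational, and such a sequence cannot be the absolute value of a realizable sequence; since $\ftrace_{K_{i}|\Q}(\gamma_{i}\theta_{i}^{n})$ feeds into that coefficient through an injective embedding of $K_{i}$, this forces $\gamma_{i}=0$, removing the block $f_{i}$ at no cost. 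If instead the current top root $\rho$ in $f_{i}$ is real, the rigidity above applies and, using the structure theory of realizable linear-recurrence sequences—up to a bounded periodic part, which must vanish here, they are governed by non-negative integer matrices, so by Perron--Frobenius their leading eigendata on a real block is confined to a single ray—one sees that the $\gamma_{i}$ produced by this family all lie on one $\Q$-line in $K_{i}$, contributing at most one dimension. Each irreducible factor is consumed once as the modulus decreases, and the process begins from the genuine dominant root $\lambda$, so $\dim\le\ell$. The step I expect to be the main obstacle is precisely this combination of Perron-type rigidity (oscillatory leading terms are incompatible with realizability) and Perron--Frobenius confinement of the realizable leading eigendata; it is what forces the passage from $\sum_{i}d_{i}$ parameters down to $\ell$, and it is the analogue here of the Diophantine input in the binary case.

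For the reverse inequality under the extra hypotheses $f(0)\neq0$ and (i) or (ii), I would construct $\ell$ linearly independent realizable solutions via Lemma~\ref{lemmaTracesAreRealisable}. For each subset $I\subseteq\{1,\dots,\ell\}$ containing $1$, the polynomial $\prod_{j\in I}f_{j}$ has $\lambda$ as its unique root of maximal modulus, and $\lambda$ is a Perron number; hypothesis (i), $\lambda\ge\sum_{\rho\neq\lambda}|\rho|$, lets one present $\prod_{j\in I}f_{j}$ directly as the characteristic polynomial of a non-negative integer companion matrix, while hypothesis (ii) ($\lambda$ strictly exceeds the sum of the moduli of its proper conjugates) together with $f(0)\neq0$ achieves this after multiplying by a suitable power of $x$, whose zero roots contribute nothing to traces of positive powers. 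Either way one obtains a non-negative integer matrix $B_{I}$ with $\trace(B_{I}^{n})=\sum_{j\in I}\ftrace_{K_{j}|\Q}(\theta_{j}^{n})$ for all $n\ge1$; this sequence is realizable by Lemma~\ref{lemmaTracesAreRealisable} and satisfies the recurrence attached to $f$. Taking differences, $\trace(B_{\{1,i\}}^{n})-\trace(B_{\{1\}}^{n})=\ftrace_{K_{i}|\Q}(\theta_{i}^{n})$ for $i\neq1$ (and $\trace(B_{\{1\}}^{n})=\ftrace_{K_{1}|\Q}(\theta_{1}^{n})$ directly), so all $\ell$ of the blocks $(\ftrace_{K_{i}|\Q}(\theta_{i}^{n}))_{n}$ lie in the $\Q$-span of realizable solutions; they are linearly independent, having distinct characteristic polynomials $f_{i}$, so that span has dimension exactly $\ell$, matching the upper bound. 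The delicate input here is again external to the excerpt—the realization of a Perron number as the spectral radius of a non-negative integer matrix with prescribed characteristic polynomial—and hypotheses (i) and (ii) are exactly what make that realization available without spurious factors.
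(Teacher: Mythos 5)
The paper does not reprove this theorem; it cites Everest {\it{et al.}}~\cite{MR1938222} and, immediately after Example~\ref{tribonacci}, records the essential input of the original argument: the Dold congruence is exploited at prime indices, and the key Diophantine ingredient guaranteeing infinitely many primes satisfying a suitable sign condition is Vinogradov's theorem on the density of the fractional parts~$\{p\beta\}$ over primes~$p$ for irrational~$\beta$ (together with inert primes in the quadratic case). Your route is genuinely different: you try to reach the upper bound via Lemma~\ref{lemabslr} (ultimate periodicity of signs forces~$(|a_n|)$ to be a linear recurrence) and then Minton's Theorem~\ref{theoremMinton}, and the lower bound via non-negative matrix realizations. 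That is a nice alternative framework, but as written both halves contain unproved claims exactly where the real work sits.

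For the upper bound, the step that a nonzero coefficient on a non-real ``current top'' root forces an oscillatory leading term that ``cannot be the absolute value of a realizable sequence'' is asserted, not proved; nothing in the cited lemmas shows that a sequence with irrational rotation in its leading behaviour violates the Dold congruences or the non-negativity condition in~\eqref{equationMainCondition}, and this is exactly the point where the cited original argument brings in Vinogradov's theorem to produce primes on which signs conflict. Moreover once the coefficient on~$\lambda$ vanishes,~$(a_n)$ satisfies a smaller recurrence which need not itself have a unique dominant root, so the ``peeling by modulus'' is not obviously well-posed, and the ``Perron--Frobenius confinement to a single~$\Q$-line per factor'' is offered without an argument. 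For the lower bound, the claim that hypothesis~(i) (resp.~(ii) plus~$f(0)\neq0$ and padding by powers of~$x$) allows one to present~$\prod_{j\in I}f_j$ as the characteristic polynomial of a \emph{non-negative} integer matrix is also not substantiated: already~$(x-1)(x-2)$ satisfies~(i) but its companion matrix has a negative entry, and the hypotheses~(i)/(ii) are not the hypotheses of the Boyle--Handelman/Kim--Ormes--Roush realization theory. A direct verification that the M\"obius sums~$\frac1n\sum_{d\smalldivides n}\mu(n/d)\,t_i(d)$ are non-negative under~(i) or~(ii) — which is what the dominance inequality is really for — would be the honest route, and is closer to what the cited source does.
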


Extending this to all linear recurrences,
and dealing with the sign conditions, remains open
{apart from the general result of
Minton in Theorem~\ref{theoremMinton}, which
gives an upper bound of the possible dimensions
for the original problem of the realizability of the
terms of a linear recurrence as opposed to the
absolute value of the terms as considered here}.
The Fibonacci case above only really used the
congruence condition in~\eqref{equationMainCondition},
but the general case also engages the
non-negativity condition, which can be
difficult to work with.
A simple example of a dominant root cubic
is given by the so-called
`Tribonacci' relation.

\begin{example}\label{tribonacci}
The sequence~$(3, 1, 3, 7, 11, 21, 39,\dots)$
satisfying
\[
u_{n+3}=u_{n+2}+u_{n+1}+u_n.
\]
is
realizable, since
its~$n$th term is~$\left(\trace(A^n)\right)$,
where~$A$ is the companion matrix to
the polynomial~$x^3-x^2-x-1$.
Theorem~\ref{rankrestriction} says that
any realizable sequence satisfying the same
cubic recurrence
must be a multiple
of this one.
\end{example}

In a rather different
direction, work of Kim, Ormes and Roush~\cite{MR1775737} on
the Spectral Conjecture of Boyle and Handelman~\cite{MR1097240}
gives a checkable criterion for a given linear recurrence sequence
to be realised {by an irreducible subshift of finite type}.
Bertrand-Mathis~\cite{MR2340599} discussed related questions
from a language-theoretic point of view.

The original proofs of
Theorem~\ref{theoremEPPWunprivedhere} and~\ref{rankrestriction}
again ultimately involve knowing there are
infinitely many primes satisfying certain conditions.
For the quadratic case the argument rests on
inert primes. The higher degree argument
in~\cite{MR1938222} also uses the result of
Vinogradov~\cite{zbMATH03029875} \index{Vinogradov theorem}
(which says that the sequence of
fractional parts of~$p\beta$
as~$p$ runs through the primes
is dense in~$(0,1)$ for irrational~$\beta$)
to ensure there are infinitely many
primes satisfying a sign condition
needed for the argument.

\subsection{Time-Changes Preserving Realizability}
\label{sectionTimeChangesPreservingRealizability}

The two viewpoints on realizable sequences---on the one hand, being
defined by a dynamical system~$(X,T)$ and, on the other,
being a sequence of non-negative integers satisfying
the combinatorial  condition~\eqref{theoremPhasnotorsion}---raises
questions about `functorial' properties. That is,
how to interpret operations that are natural in one setting
in the other setting. This idea is explored further in work
of Pakapongpun {\it{et al.}}~\cite{MR2486259,MR3194906,MR1873399}.
The very simplest of these, for example, is to note that changing~$T$ to~$T^2$
changes the sequence~$(\fix_T(n))$ to the sequence~$(\fix_T(2n))$.
We can interpret this observation by saying that the map~$n\mapsto2n$ for~$n\in\N$
is a realizability-preserving time change in the
following sense.

\begin{definition}[Jaidee {\it{et al.}}~\cite{MR4002553}]\label{definitionSymmetriesOfZetaFunctions}
For a system~$(X,T)$
define~$\mathscr{P}_T$,
the
set of \emph{realizability-preserving time-changes for}~$(X,T)$,
to be the
set of maps~$h\colon\mathbb{N}\to\mathbb{N}$
with the property that~$\bigl(\fix_T(h(n))\bigr)$
is a realizable sequence.
Also
define the monoid\label{pageMonoidP} of \emph{universally
realizability-preserving time-changes},~$\mathscr{P}=
\bigcap_{\{(X,T)\}}\mathscr{P}_T$,
where
the intersection is taken over all systems~$(X,T)$.
\end{definition}

Not only is~$\mathscr{P}$ a monoid---closed under composition---it
is in fact closed under infinite composition in the following sense.
If~$(h_1,h_2,\dots)$ is a sequence
of functions in~$\mathscr{P}$ with
the property
that
the sequence
\begin{equation}\label{equationwhoops!}
\(h_1(n),h_2(h_1(n)),h_3(h_2(h_1(n))),\dots\)
\end{equation}
stabilizes for every~$n\in\N$, then
the infinite composition~$h=\cdots\circ h_3\circ h_2\circ h_1$,
defined by setting~$h(n)$ to be the stabilized value
or limit of~\eqref{equationwhoops!},
is also in~$\mathscr{P}$.

\begin{theorem}[Jaidee {\it{et al.}}~\cite{MR4002553}]
The monoid~$\mathscr{P}$ has the following properties.
\begin{enumerate}[\rm(a)]
\item A polynomial lies in~$\mathscr{P}$ if and only if it is a monomial.
\item The monoid~$\mathscr{P}$ is uncountable.
\end{enumerate}
\end{theorem}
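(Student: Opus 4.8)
The plan is to base both parts on a reduction which I would establish first: a map~$h\colon\N\to\N$ lies in~$\mathscr{P}$ if and only if~$\bigl(\eper_\ell(h(n))\bigr)_{n}$ is realizable for every~$\ell\ge1$. Necessity follows by applying the definition of~$\mathscr{P}$ to a system consisting of a single closed orbit of length~$\ell$, where~$\fix(n)=\eper_\ell(n)$. For sufficiency, decompose an arbitrary system~$(X,T)$ into closed orbits: then~$\fix_T(h(n))=\sum_{\ell\ge1}\orbit_T(\ell)\,\eper_\ell(h(n))$, and for each fixed~$n$ only finitely many~$\ell$ can divide some~$h(d)$ with~$d\mid n$, so the right-hand side is a pointwise-finite non-negative integer combination of realizable sequences, which is realizable via the disjoint union of the corresponding minimal systems (one checks the resulting orbit counts are finite). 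Granting this, the ``if'' direction of (a) is quick: $\mathscr{P}$ is a monoid, a monomial~$cx^{k}$ factors as~$(n\mapsto cn)\circ(n\mapsto n^{k})$, and the constant map~$n\mapsto c$ sends~$(\fix_T(n))_{n}$ to the realizable constant sequence~$(\fix_T(c))_{n}$; so it suffices to note that~$\fix_T(cn)=\fix_{T^{c}}(n)$ (and~$(X,T^{c})$ is again a system with finite orbit counts), and that~$\eper_\ell(n^{k})=\tfrac{\ell}{\ell'}\eper_{\ell'}(n)$ with~$\ell'=\prod_{p}p^{\lceil v_{p}(\ell)/k\rceil}$ dividing~$\ell$, both being non-negative integer multiples of elementary periodic sequences and hence realizable.

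The content of (a) is the converse. Suppose~$h$ is a polynomial in~$\mathscr{P}$; as~$h$ maps~$\N$ into~$\N$ its leading coefficient is positive, and after multiplying~$h$ by a suitable positive integer---which changes neither membership in~$\mathscr{P}$ nor being a monomial---we may take~$h\in\Z[x]$. For each prime~$q$ the sequence~$\bigl(\eper_q(h(n))\bigr)_{n}$ is realizable, hence a Dold sequence, so condition~\ref{propositionDifferencesBetweenRealizableSequences3} of Proposition~\ref{propositionDifferencesBetweenRealizableSequences} with~$n=\ell=1$ gives~$\eper_q(h(p))\equiv\eper_q(h(1))\pmod{p}$ for every prime~$p\ne q$; since~$\gcd(p,q)=1$ and~$\eper_q(\cdot)$ takes values in~$\{0,q\}$, this forces~$q\mid h(p)$ precisely when~$q\mid h(1)$, for all distinct primes~$p,q$. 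By Dirichlet's theorem the primes exhaust every class of~$(\Z/q\Z)^{\times}$, so for each~$q$ the roots of~$h$ modulo~$q$ meet~$(\Z/q\Z)^{\times}$ either in all of it or not at all; the first alternative forces~$q$ to divide all coefficients of~$h$ whenever~$q>\deg h+1$, hence can hold for only finitely many~$q$. Thus for all but finitely many~$q$ one has~$q\mid h(r)\Rightarrow q\mid r$ for every~$r\in\Z$. Writing~$h(x)=x^{j}g(x)$ with~$g(0)\ne0$, this gives~$q\mid g(r)\Rightarrow q\mid r$, so if~$q$ divides some value~$g(r)$ then, writing~$r=qs$, we get~$q\mid g(0)$; hence, excluding in addition the finitely many primes dividing~$g(0)$, no remaining prime divides any value of~$g$. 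Since a non-constant integer polynomial has infinitely many prime divisors among its values, $g$ is constant, and~$h=g(0)\,x^{j}$ is a monomial.

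For (b) I would produce an uncountable subfamily of~$\mathscr{P}$ via the reduction. Fix a prime~$p_{0}$ and, for each non-decreasing~$g\colon\N_{0}\to\N_{0}$ with~$g(0)=0$ and~$g(e)\ge e$ for all~$e\ge1$, put~$h_{g}(n)=\bigl(n/p_{0}^{\,v_{p_{0}}(n)}\bigr)\,p_{0}^{\,g(v_{p_{0}}(n))}$, so that~$h_{g}$ rescales only the~$p_{0}$-part of~$n$ and maps~$\N$ into~$\N$. With~$\ell=p_{0}^{e}\ell'$, $p_{0}\nmid\ell'$, and~$\alpha_{e}=\min\{a:g(a)\ge e\}$, the condition~$g(e)\ge e$ gives~$\alpha_{e}\le e$; and since~$\ell\mid h_{g}(n)$ holds exactly when~$\ell'\mid n$ and~$g(v_{p_{0}}(n))\ge e$, i.e.\ exactly when~$\ell'p_{0}^{\alpha_{e}}\mid n$, one gets~$\eper_{\ell}(h_{g}(n))=p_{0}^{\,e-\alpha_{e}}\,\eper_{\ell'p_{0}^{\alpha_{e}}}(n)$, a non-negative integer multiple of an elementary periodic sequence, so~$h_{g}\in\mathscr{P}$. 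Since~$h_{g}(p_{0}^{\,a})=p_{0}^{\,g(a)}$, distinct~$g$ give distinct~$h_{g}$, and there are uncountably many admissible~$g$ (for example~$g_{A}(e)=e+\lvert A\cap\{1,\dots,e\}\rvert$ for arbitrary~$A\subseteq\N$); hence~$\mathscr{P}$ is uncountable.

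The main obstacle I anticipate is the converse in (a): translating ``$h$ preserves realizability for every cyclic system'' into a rigidity statement about~$h$ modulo primes. Once the Dold congruence and Dirichlet's theorem have been used to rule out unit roots of~$h$ modulo all large primes, the only further external input is the classical fact that the values of a non-constant integer polynomial have infinitely many prime divisors; the remainder is routine. In (b) the sole delicate point is the explicit evaluation of~$\eper_{\ell}(h_{g}(n))$ together with the local finiteness of the disjoint-union construction underlying the reduction.
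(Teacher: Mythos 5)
Your reduction---that $h\in\mathscr{P}$ if and only if $\bigl(\eper_\ell(h(n))\bigr)_n$ is realizable for every $\ell$---is correct and well suited to this problem. Necessity is clear from the single-cycle system, and sufficiency follows from the pointwise-finite disjoint-union argument once one notes $\fix_T(h(n))<\infty$ bounds the orbit counts. Both implications of (a) as you present them are sound: the ``if'' direction via the explicit identities $\fix_T(cn)=\fix_{T^c}(n)$ and $\eper_\ell(n^k)=\tfrac{\ell}{\ell'}\eper_{\ell'}(n)$, and the ``only if'' direction via the Dold congruence at primes, Dirichlet to realise all units of $(\Z/q\Z)^\times$ as residues of primes, Lagrange's bound on the number of roots, and the classical fact that a nonconstant polynomial in $\Z[x]$ has infinitely many prime divisors among its values. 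I verified the evaluation in (b), namely $\eper_{\ell}(h_g(n))=p_0^{\,e-\alpha_e}\eper_{\ell'p_0^{\alpha_e}}(n)$ with $\alpha_e\le e$ guaranteed by $g(e)\ge e$, and the family $g_A$ does give an injection of $2^{\N}$ into $\mathscr{P}$.

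Your route to (b) is genuinely different from the one sketched in the survey. The argument attributed to Moss there embeds the power set of the primes into $\mathscr{P}$ by taking infinite compositions of the maps $g_p$ (which increment $v_p(n)$ when $p\mid n$), and this requires first proving $g_p\in\mathscr{P}$ and then verifying the stabilisation needed to form the infinite composition. You instead fix a single prime $p_0$ and let an arbitrary valuation-rescaling function $g$ act on the $p_0$-part of $n$; this produces an uncountable family directly, with a closed-form evaluation of $\eper_\ell\circ h_g$ that makes membership in $\mathscr{P}$ immediate via your reduction, and no infinite-composition lemma is needed. Each approach has its advantages: the Moss--style construction exhibits $\mathscr{P}$ as containing a copy of the Boolean algebra of sets of primes and showcases closure under infinite composition, while yours is more elementary and self-contained. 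For part (a) the survey gives no proof beyond attribution, so I cannot compare against the paper's argument, but your congruence-plus-Dirichlet-plus-Schur argument is complete and correct as written.
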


The fact that monomials lie in~$\mathscr{P}$ was shown first
in Moss' thesis~\cite{pm}, and of course means
that if a sequence~$(a_n)$ satisfies the Dold
congruence, then for any~$k,\ell\in\N$ the sequence~$(a_{\ell n^k})$
also does.
The proof that~$\mathscr{P}$ is uncountable relies
on a simpler remark also due to Moss: For a given prime~$p$,
the map~$g_p\colon\mathbb{N}\to\mathbb{N}$
defined by
\[
g_p(n)=\begin{cases}n&\mbox{if }p\notdivides n;\\pn&\mbox{if }p\divides n\end{cases}
\]
lies in~$\mathscr{P}$. Using the closure under
infinite composition property mentioned above, this
allows us to embed the power set of the set of primes
into~$\mathscr{P}$. One of the questions raised in~\cite{MR4002553}
asked if a nontrivial permutation could lie in~$\mathscr{P}$, and
we answer this here, in a stronger form.

\begin{theorem}\label{theoremPhasnotorsion}
The only surjective map~$\mathbb{N}\to\mathbb{N}$ in~$\mathscr{P}$
is the identity.
\end{theorem}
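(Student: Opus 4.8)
The plan is to show that any surjective $h\in\mathscr{P}$ is injective, hence a bijection, and then to rule out all nontrivial bijections. The first step is combinatorial: if $h\colon\N\to\N$ is surjective but not injective, then some fibre $h^{-1}(m)$ has at least two elements. I would exploit this by choosing a dynamical system $(X,T)$ for which $\fix_T(m)$ is large while $\fix_T(k)=0$ for some other $k\in h^{-1}(m)$ — but since $h(k)=m$ forces $\fix_T(h(k))=\fix_T(m)$, surjectivity together with a counting argument via \eqref{equationMobius1} should produce a violation of the congruence or non-negativity condition for $\bigl(\fix_T(h(n))\bigr)$. Concretely, I would take $T$ to be a single fixed point together with a single closed orbit of some carefully chosen length, so that $\fix_T$ is the elementary periodic sequence $1+\eper_\ell$ for a prime $\ell$, and then track what the congruence \ref{propositionDifferencesBetweenRealizableSequences3} of Proposition~\ref{propositionDifferencesBetweenRealizableSequences} demands of the pulled-back sequence.

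Once $h$ is known to be a bijection, the second step is to eliminate every nonidentity permutation. Suppose $h\neq\mathrm{id}$, and let $a$ be minimal with $h(a)\neq a$; then $h(a)=b>a$ and, since $h$ is a bijection fixing $1,\dots,a-1$, there is some $c>a$ with $h(c)=a$. The idea is again to feed in a well-chosen system. The most promising choice is a system whose periodic-point sequence is a single elementary periodic sequence $\eper_a$, realized by $a$ disjoint copies of a closed orbit of length $a$ (so $\fix_T(n)=a$ if $a\divides n$ and $0$ otherwise, which is realizable since the M\"obius transform is $b_a=1$, all other $b_n=0$, wait — more carefully one wants $\orbit_T(a)=1$, giving $\fix_T(n)=a\cdot[a\divides n]$, realizable). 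Pulling back along $h$: the value $\fix_T(a)=a$ now sits at position $c$, i.e. $\fix_T(h(c))=a\neq 0$, while many positions $n$ with $a\divides n$, $n\neq a$ might map elsewhere. The congruence condition $a_{np^\ell}\equiv a_{np^{\ell-1}}\pmod{p^\ell}$ from Proposition~\ref{propositionDifferencesBetweenRealizableSequences}\ref{propositionDifferencesBetweenRealizableSequences3}, applied to the sequence $\bigl(\fix_T(h(n))\bigr)$, should then be forced to fail for a suitable prime power, because the "defect" $a$ sitting at an isolated position incompatible with the divisibility structure violates a congruence.

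The cleanest route for the bijection step may instead be: since $\mathscr P$ is closed under composition and contains no torsion beyond the identity would follow once we show surjective implies identity, I would actually prove directly that a bijection $h\in\mathscr P$ must satisfy $h(n)\divides$-compatibility — namely that if $(a_n)$ is realizable then so is $(a_{h(n)})$, and choosing $(a_n)=\fix_T(n)$ for $T$ a single $n_0$-cycle forces, via \eqref{equationMobius1}, that $\frac1n\sum_{d\divides n}\mu(n/d)a_{h(d)}\in\N_0$; picking $n_0$ prime and tracking which $n$ have $h(n)\in\{1,n_0\}$ pins down $h$ on primes, and an induction handles the rest. The main obstacle I anticipate is the bookkeeping in the bijection-to-identity step: one must choose the test system $(X,T)$ so that a single misplaced value of $\fix_T$ provably breaks \emph{some} explicit Dold congruence, and controlling the full M\"obius sum $\sum_{d\divides n}\mu(n/d)\fix_T(h(d))$ for a permutation $h$ — rather than just a monotone reindexing — requires care, since permutations can move the "support" of an elementary periodic sequence in complicated ways. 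I expect the key idea that makes it work is to use systems supported on a single closed orbit of prime length, so that $\fix_T$ takes only two values and the combinatorics of where those values land under $h$ is manageable.
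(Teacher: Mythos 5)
Your instinct to feed elementary periodic sequences realized by single closed orbits into the time-change is exactly the right starting point, and it is what the paper does. However, your plan is missing the two steps that actually close the argument, and the way you propose to organize the proof creates difficulties the paper avoids.

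First, the paper does not split into ``surjective $\Rightarrow$ injective'' and then ``bijection $\Rightarrow$ identity.'' It works with an arbitrary surjection $\sigma$ directly. Your injectivity sketch (``choose $(X,T)$ with $\fix_T(m)$ large while $\fix_T(k)=0$ for some other $k\in h^{-1}(m)$'') does not work as stated: if $h(k_1)=h(k_2)=m$, both pulled-back values equal $\fix_T(m)$, so you cannot make one large and one zero. That step as written has no content.

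Second, and more importantly, you never extract the structural fact that makes the test sequences useful. The decisive observation in the paper is: a realizable sequence taking only the values $0$ and $k$ must be $\tfrac{k}{d}\eper_d$ for some $d\divides k$. Applying this to $\bigl(\eper_k(\sigma(n))\bigr)_n$ (which is realizable, takes only values $0$ and $k$, and hits $k$ by surjectivity) gives $\sigma^{-1}(k\N)=d(k)\N$ with $d(k)\divides k$, for every $k$. This is much stronger and cleaner than ``some explicit Dold congruence fails at a suitable prime power,'' which is the route you gesture at and correctly flag as hard to control. Without this lemma, your $\eper_a$ test for the minimal displaced $a$ only yields partial divisibility information (for instance that $a\divides h(a)$ and $a\divides h^{-1}(a)$), not a contradiction.

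Third, you are missing the closing argument. Even with $\sigma^{-1}(k\N)=d(k)\N$ in hand, one needs to show $d(k)=k$ for all $k$. The paper does this with a chain argument: take a maximal chain of divisors $1=m_0\divides m_1\divides\cdots\divides m_s=k$ with prime ratios ($s=\Omega(k)$), note that surjectivity turns the strict chain $m_0\N\supsetneq\cdots\supsetneq m_s\N$ into a strict chain $d(m_0)\N\supsetneq\cdots\supsetneq d(m_s)\N=d(k)\N$, and conclude $\Omega(d(k))\ge\Omega(k)$, forcing $d(k)=k$ since $d(k)\divides k$. Then $k\divides\sigma(n)\Leftrightarrow k\divides n$, and specializing $k=n$ and $k=\sigma(n)$ gives $\sigma(n)=n$. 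Your proposal contains neither the $\Omega$-counting idea nor any replacement for it. (A variant of your ``minimal displaced $a$'' approach can in fact be made to work once the lemma $\sigma^{-1}(k\N)=d(k)\N$ is available — testing with both $\eper_a$ and $\eper_{h(a)}$ yields $h^{-1}(a\N)=h^{-1}(h(a)\N)=a\N$ and hence $h(a)=a$ by surjectivity — but as written your sketch never reaches this.)

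In short: right test objects, right intuition that the combinatorics should become tractable on single cycles, but the two load-bearing ideas — the classification of realizable $\{0,k\}$-valued sequences, and the chain/$\Omega$-counting step — are absent, and the injectivity detour is both unnecessary and unsound as sketched.
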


\begin{proof}
Let~$\sigma\colon\N\to\N$ be a surjective map,
assume that~$\sigma\in\mathscr{P}$, and fix~$k\in\N$.
Applying~$\sigma$ to~$\eper_k$ gives the
sequence~$(\eper_k(\sigma(n)))$, which by
definition of~$\eper_k$ can only take on the values~$0$ and~$k$,
and by surjectivity of~$\sigma$ must take on
the value~$k$.
By thinking about the possible closed orbits (or from
the Dold congruence), we know that a
realizable sequence only taking on
a single non-zero value~$k$ is necessarily an integer multiple of the sequence~$\eper_d$ for some~$d$ dividing~$k$. 
If~$d(k)$ is the smallest integer
with the property that~$k\divides\sigma(d(k))$,
it follows that~$d(k)\divides k$ and~$k\divides\sigma(n)$
if and only if~$d(k)\divides n$ for any~$n\in\N$.
That is,
\begin{equation}\label{equationMemoriesTamperedWith}
\sigma^{-1}(k\N)=d(k)\N.
\end{equation}
Let~$s=\Omega(k)$ be the number of prime divisors of~$k$ counted
with multiplicity, and
define the divisors~$m_0,m_1,\dots,m_s$ of~$k$
so that~$m_0=1$ and~$\frac{m_{j+1}}{m_j}$
is a prime for~$j=1,\dots,s-1$.
Then there are strict inclusions
\[
m_0\N\supsetneq m_1\N\supsetneq m_2\N\supsetneq\cdots\supsetneq m_s\N=k\N.
\]
Since~$\sigma$ is surjective, applying~$\sigma^{-1}$ gives
strict inclusions
\begin{equation}\label{equationMemoriesTamperedWith2}
d(m_0)\N\supsetneq d(m_1)\N\supset d(m_2)\N\supsetneq\cdots\supsetneq d(m_s)\N=d(k)\N
\end{equation}
by~\eqref{equationMemoriesTamperedWith}.
If~$d(k)$ is a proper divisor of~$k$ then~$\Omega(d(k))<\Omega(k)$,
contradicting~\eqref{equationMemoriesTamperedWith2}.
It follows that~$d(k)=k$ for all~$k\in\N$.
Now the definition of~$d(k)$
shows that~$k\divides\sigma(n)$ if and only if~$k\divides n$
for any~$k,n\in\N$.
Applying this
with~$k=n$ and~$k=\sigma(n)$
shows that~$\sigma(n)=n$ for all~$n\in\N$.
\end{proof}

\subsection{Repair Phenomenon}

As remarked earlier, the Fibonacci sequence~$(F_n)$
is not realizable.
Moss and Ward~\cite{mossfibo}
strengthened this observation, and
showed that the Fibonacci sequence sampled
along the squares can be `repaired' to be realizable in the
following sense.

\begin{theorem}[Moss and Ward~\cite{mossfibo}]
If~$j$ is odd, then
the set of primes dividing denominators
of~$\frac{1}{n}\sum_{d\smalldivides n}\mu\bigl(\tfrac{n}{d}\bigr)F_{d^j}$
for~$n\in\N$ is infinite.
If~$j$ is even, then the sequence~$(F_{n^j})$ is
not realizable, but the sequence~$(5F_{n^j})$ is.
\end{theorem}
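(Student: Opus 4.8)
The plan is to translate realizability into the Euler--Fermat form of the Dold congruence from Proposition~\ref{propositionDifferencesBetweenRealizableSequences}\ref{propositionDifferencesBetweenRealizableSequences3} and then to feed in classical Fibonacci identities, working in the ring of integers $R=\Z[\phi]$ of $\Q(\sqrt5)$, where $\phi=\tfrac{1+\sqrt5}{2}$ and $\psi=\tfrac{1-\sqrt5}{2}$ are the two roots of $x^{2}-x-1$ and $\sqrt5\,F_{n}=\phi^{n}-\psi^{n}$. The case $j$ odd is quick: it suffices to produce infinitely many primes $p$ with $p\nmid F_{p^{j}}-1$, because then the $n=p$ term of the sum in the statement equals $\tfrac1p(F_{p^{j}}-1)$, whose denominator in lowest terms is $p$. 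If $p$ is inert in $\Q(\sqrt5)$ (equivalently $p\equiv\pm2\pmod5$), then the Frobenius of the field $R/pR$, which has $p^{2}$ elements, interchanges the two roots $\phi,\psi$ of $x^{2}-x-1$; applying it an odd number of times gives $\sqrt5\,F_{p^{j}}=\phi^{p^{j}}-\psi^{p^{j}}\equiv\psi-\phi=-\sqrt5\pmod p$, and since $p\ne5$ makes $\sqrt5$ invertible mod $p$ this gives $F_{p^{j}}\equiv-1$, hence $F_{p^{j}}-1\equiv-2\not\equiv0\pmod p$ for every inert prime $p>2$. Dirichlet's theorem then supplies infinitely many such $p$.

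Now let $j$ be even. That $(F_{n^{j}})$ is not realizable is immediate: since $5\divides F_{m}$ exactly when $5\divides m$, we have $5\divides F_{5^{j}}$ while $F_{1}=1$, so the Dold congruence fails already at $n=5$. The substance is to prove $(5F_{n^{j}})$ realizable; its terms are positive integers, so I would reduce this to the Dold property together with nonnegativity of the M\"obius transform, the latter postponed to a size estimate at the end. For the Dold property, Proposition~\ref{propositionDifferencesBetweenRealizableSequences}\ref{propositionDifferencesBetweenRealizableSequences3} (after the usual reduction to $p\nmid n$) demands $5F_{Np^{j}}\equiv5F_{N}\pmod{p^{\ell}}$ for every prime $p$ and every $\ell\ge1$, where $N=n^{j}p^{(\ell-1)j}$ with $p\nmid n$, so that $v_{p}(N)=(\ell-1)j$. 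I would split this into the case $p\ne5$ and the ramified prime $p=5$.

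For $p\ne5$ the element $\sqrt5$ is a unit modulo $p^{\ell}$, so it is enough to show $\phi^{Np^{j}}\equiv\phi^{N}$ and $\psi^{Np^{j}}\equiv\psi^{N}$ in $R/p^{\ell}R$, i.e.\ that the multiplicative order of $\phi$ (a unit, since $\phi\psi=-1$) divides $N(p^{j}-1)$. I would bound that order by the exponent of $(R/p^{\ell}R)^{\times}$, which divides $p^{\ell-1}(p-1)$ when $p$ splits, $p^{\ell-1}(p^{2}-1)$ when $p$ is an odd inert prime, and $3\cdot2^{\ell-1}$ when $p=2$ (which is inert). In each case its prime-to-$p$ part divides $p-1$ or $p^{2}-1$, hence divides $p^{j}-1$ \emph{precisely because $j$ is even}, while its $p$-part is at most $p^{\ell-1}$, which divides $p^{(\ell-1)j}$ and so $N$; combining the two coprime factors gives divisibility into $N(p^{j}-1)$, and clearing the unit $\sqrt5$ (and noting that a congruence of rational integers valid modulo $p^{\ell}R$ holds modulo $p^{\ell}$) yields the congruence for $5F$. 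For the ramified prime $p=5$ the mechanism is different: the identity $F_{5m}/F_{m}=25F_{m}^{4}+25(-1)^{m}F_{m}^{2}+5\equiv5\pmod{25}$ gives $v_{5}(F_{5m})=v_{5}(F_{m})+1$, hence $v_{5}(F_{N})=(\ell-1)j$ and $v_{5}(F_{N5^{j}})=\ell j$, so $v_{5}(5F_{N})=1+(\ell-1)j\ge\ell$ and $v_{5}(5F_{N5^{j}})=1+\ell j\ge\ell$ (using $j\ge1$); both sides vanish modulo $5^{\ell}$ and the congruence is trivial. Finally, nonnegativity of $\tfrac1n\sum_{d\smalldivides n}\mu(n/d)\,5F_{d^{j}}$ follows because its $d=n$ term $5F_{n^{j}}$ dominates the at most $\sigma_{0}(n)$ remaining terms, each at most $5F_{(n/2)^{j}}$, since $F_{n^{j}}/F_{(n/2)^{j}}$ outgrows every power of $n$ once $j\ge1$.

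The hard part will be the prime-by-prime order computation when $p\ne5$: one must control the exponent of $(R/p^{\ell}R)^{\times}$ uniformly in $\ell$ across all three splitting types, and in particular handle $p=2$ with care, since the unit group of $R/2^{\ell}R$ carries an extra $2$-torsion factor coming from $-1\in1+2R$. It is there that the hypotheses are genuinely used: the evenness of $j$ is exactly what forces $p^{2}-1$ (and $3$, when $p=2$) to divide $p^{j}-1$, and the multiplier $5$ is exactly what absorbs the obstruction at the ramified prime, which cannot be treated by any order argument and instead needs the $5$-adic valuation identity.
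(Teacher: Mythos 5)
This theorem is quoted from the separate paper of Moss and Ward~\cite{mossfibo}; the survey under review states it without proof, so there is no internal proof to compare against, and I can only assess your argument on its own terms. It appears to be correct and is the natural approach. For the $j$ odd case your Frobenius argument is clean: since the residue field $R/pR$ at an inert prime $p$ has $p^2$ elements, the $p$-power map swaps $\phi$ and $\psi$, and iterating an odd number of times gives $\sqrt5\,F_{p^j}\equiv\psi-\phi\pmod{pR}$, hence $F_{p^j}\equiv-1\pmod p$ (after clearing the unit $\sqrt5$), so the $n=p$ term of the M\"obius sum has denominator $p$; there are infinitely many such primes by Dirichlet. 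The non-realizability of $(F_{n^j})$ at $n=5$ is immediate. For the realizability of $(5F_{n^j})$, the reduction to the Euler--Fermat form of Proposition~\ref{propositionDifferencesBetweenRealizableSequences}\ref{propositionDifferencesBetweenRealizableSequences3} is legitimate, and the split into $p\ne5$ (order of $\phi$ in $(R/p^\ell R)^\times$, with evenness of $j$ forcing $p^2-1$, resp.\ $3$, to divide $p^j-1$) and $p=5$ (the $5$-adic valuation $v_5(F_m)=v_5(m)$, so the factor $5$ makes both sides vanish mod $5^\ell$) is exactly where the two hypotheses are used.

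Two points that would need to be tightened in a write-up. First, the nonnegativity of $\tfrac1n\sum_{d\mid n}\mu(n/d)\,5F_{d^j}$ is treated asymptotically (``outgrows every power of~$n$''); since realizability is an assertion for every~$n$, you should either give an explicit inequality such as $F_{n^j}>\sigma_0(n)F_{\lfloor n/2\rfloor^j}$ for all $n\ge2$ using $F_m\ge\phi^{m-2}$, or simply check small~$n$ by hand. (Note also that $(n/2)^j$ need not be an integer, so $F_{(n/2)^j}$ should be replaced by $F_{\lfloor n/2\rfloor^j}$ or $F_{(n/q)^j}$ for the smallest prime $q\mid n$.) Second, the claim that the exponent of $(R/2^\ell R)^\times$ divides $3\cdot2^{\ell-1}$ is asserted rather than proved; it is true, and follows from the elementary observation that $(1+2R)^{2^{m}}\subseteq 1+2^{m+1}R$, but this bound is not entirely obvious given the extra $2$-torsion from $-1\in1+2R$ and deserves a sentence of justification. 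Neither point is a genuine gap in the idea, only in the exposition.
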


This motivated the following definition.

\begin{definition}[Miska and Ward~\cite{miska2021stirling}]\label{definitionAlmostRealizable}
For a sequence~$A$ of non-negative integers,
\[
\failure(A)
=
\begin{cases}
\lcm
\(\{
\denom\((\tfrac1n\mu*A)(n)\)\mid n\ge1
\}\)&\mbox{if this is finite};\\
\infty&\mbox{if not}.
\end{cases}
\]
The sequence~$A$ is said to be \emph{almost realizable}
if~$\failure(A)<\infty$ and
it satisfies the sign condition.
\end{definition}

Thus the result of~\cite{mossfibo}
states in particular that~$(F_n)$ is not
almost realizable, but~$(F_{n^2})$ is
almost realizable with~$\failure((F_{n^2}))=5$.
This is part of a wider phenomenon in which
a Lucas sequence sampled along the squares
becomes realizable if it is multiplied by its
discriminant.

There is another combinatorial setting where
a similar almost realizability occurs.
Write~$\stirlingone{n}{k}$ for the (signless) Stirling
numbers of the first kind, defined for any~$n\ge1$ and~$0\le k\le n$
to be the number of permutations of~$\{1,\dots,n\}$
with exactly~$k$ cycles,
and write~$\stirlingtwo{n}{k}$ for~$n\ge1$ and~$1\le k\le n$
for the Stirling numbers of the second kind,
so~$\stirlingtwo{n}{k}$ counts the number of ways to partition
a set comprising~$n$ elements
into~$k$ non-empty subsets.

\begin{theorem}[Miska and Ward~\cite{miska2021stirling}]
For each~$k\ge1$ define sequences
\begin{align*}
\sequenceone_k
&=
\bigl(\stirlingone{n+k-1}{k}\bigr)_{n\ge1}
\intertext{and}
\sequencetwo_k
&=
\bigl(\stirlingtwo{n+k-1}{k}\bigr)_{n\ge1}.
\end{align*}
Then:
\begin{enumerate}[\rm(a)]
\item For~$k\ge1$ the sequence~$\sequenceone_k$ is
not almost realizable.
\item For~$k\le2$ the sequence~$\sequencetwo_k$ is realizable.
\item For~$k\ge3$ the sequence~$\sequencetwo_k$ is not
realizable, but is almost realizable
with~$\failure\bigl(\sequencetwo_k\bigr)\divides(k-1)!$
for all~$k\ge1$.
\end{enumerate}
\end{theorem}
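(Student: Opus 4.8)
The argument rests on one closed form: expanding the ordinary generating function $\sum_{n\ge1}\stirlingtwo{n+k-1}{k}z^{n}=z\prod_{i=1}^{k}(1-iz)^{-1}$ into partial fractions gives
\[
\stirlingtwo{n+k-1}{k}=\sum_{i=1}^{k}A_{i}\,i^{n},\qquad
A_{i}=\frac{(-1)^{k-i}\,i^{k-2}}{(i-1)!\,(k-i)!}
\]
(equivalently $\stirlingtwo{n+k-1}{k}=h_{n-1}(1,2,\dots,k)$, the complete homogeneous symmetric polynomial in $1,\dots,k$). I would record this first and use it throughout.

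\textbf{Part (a).} The point here is a congruence for Stirling numbers of the first kind: if $p>k$ is prime, then $\stirlingone{p+k-1}{k}\equiv-1\pmod p$. Write out $x(x+1)\cdots(x+p+k-2)=\sum_{j}\stirlingone{p+k-1}{j}x^{j}$: the $p$ consecutive factors $x,x+1,\dots,x+p-1$ run over a complete residue system mod $p$, so their product is $\equiv x^{p}-x$, while the remaining $k-1$ factors reduce mod $p$ to $x(x+1)\cdots(x+k-2)=\sum_{j}\stirlingone{k-1}{j}x^{j}$; comparing coefficients of $x^{k}$ (no $x^{p+j}$ contributes, as $p>k$) gives $\stirlingone{p+k-1}{k}\equiv-\stirlingone{k-1}{k-1}=-1$. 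Since $\stirlingone{k}{k}=1$, we obtain $(\mu\ast\sequenceone_{k})(p)=\stirlingone{p+k-1}{k}-1\equiv-2\pmod p$, so for every prime $p>\max(k,2)$ the denominator of $\tfrac1p(\mu\ast\sequenceone_{k})(p)$ is divisible by $p$. Hence infinitely many primes occur in the denominators of the terms of $\doldB(\sequenceone_{k})$, so $\failure(\sequenceone_{k})=\infty$ and $\sequenceone_{k}$ is not almost realizable.

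\textbf{Part (b) and the universal assertions in (c).} Part (b): $\sequencetwo_{1}=(1,1,\dots)$ is realizable, and $\sequencetwo_{2}=(2^{n}-1)_{n\ge1}$ has $\tfrac1n(\mu\ast\sequencetwo_{2})(n)=L_{2}(n)-[n=1]\in\N_{0}$ (with $L_{i}(n)=\tfrac1n\sum_{d\mid n}\mu(n/d)i^{d}\in\N_{0}$), hence is realizable. For $\failure\mid(k-1)!$: since $(k-1)!\,A_{i}=(-1)^{k-i}\binom{k-1}{i-1}i^{k-2}\in\Z$, the sequence $(k-1)!\,\sequencetwo_{k}$ is a $\Z$-linear combination of the trace (hence Dold) sequences $(i^{n})_{n}$, so it is a Dold sequence; by Lemma~\ref{lemmaConstructingDoldSequences} and $\Q$-linearity of $\doldB$ we get $(k-1)!\,\doldB(\sequencetwo_{k})\in\sequences(\Z)$, i.e.\ every $\denom\bigl(\tfrac1n(\mu\ast\sequencetwo_{k})(n)\bigr)$ divides $(k-1)!$, so $\failure(\sequencetwo_{k})\mid(k-1)!$ for all $k$. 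For the sign condition, a short induction on $M$ (base $\binom{k+1}{2}\ge1$; step $\stirlingtwo{M+1}{k}=k\stirlingtwo{M}{k}+\stirlingtwo{M}{k-1}\ge2\stirlingtwo{M}{k}$) gives, for $k\ge2$, $\stirlingtwo{M}{k}\ge\sum_{m=k}^{M-1}\stirlingtwo{m}{k}$ for $M\ge k+1$; as $\{n/d:d\mid n,\ d\ge2\}$ is exactly the set of proper divisors of $n$, this yields $\bigl|\sum_{d\mid n,\ d\ge2}\mu(d)\stirlingtwo{n/d+k-1}{k}\bigr|\le\sum_{m=k}^{n+k-2}\stirlingtwo{m}{k}\le\stirlingtwo{n+k-1}{k}$, hence $(\mu\ast\sequencetwo_{k})(n)=\stirlingtwo{n+k-1}{k}+\sum_{d\mid n,\ d\ge2}\mu(d)\stirlingtwo{n/d+k-1}{k}\ge0$ (the cases $n=1$, $k=1$ being trivial). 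Together these show $\sequencetwo_{k}$ is almost realizable for every $k$.

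\textbf{Non-realizability of $\sequencetwo_{k}$ for $k\ge3$, and the main obstacle.} Since the sign condition holds, $\sequencetwo_{k}$ is realizable iff it is a Dold sequence, i.e.\ iff every $\tfrac1n(\mu\ast\sequencetwo_{k})(n)$ is an integer, and one must exhibit an $n$ where this fails. If $k\not\equiv1,2\pmod4$ this is immediate: $\tfrac12(\mu\ast\sequencetwo_{k})(2)=\tfrac12\bigl(\binom{k+1}{2}-1\bigr)\notin\Z$. If $k\equiv1,2\pmod4$ (so $k\ge5$) I would work $2$-adically at $n=2^{\ell}$ for large $\ell$: from $(\mu\ast\sequencetwo_{k})(2^{\ell})=\sum_{i}A_{i}\bigl(i^{2^{\ell}}-i^{2^{\ell-1}}\bigr)$, lifting the exponent gives $v_{2}(i^{2^{\ell}}-i^{2^{\ell-1}})=v_{2}(i^{2}-1)+\ell-2$ for odd $i\ge3$, the even-$i$ and $i=1$ summands having far larger valuation; the $i=3$ summand then has valuation $v_{2}(A_{3})+v_{2}(8)+\ell-2=\ell-v_{2}((k-3)!)<\ell$, and one must verify this is the strictly minimal valuation occurring—or, if several summands tie, that there is no cancellation among them—so that $2^{\ell}\nmid(\mu\ast\sequencetwo_{k})(2^{\ell})$. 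The reformulation and the sign/denominator estimates come out cleanly; the genuine nuisance, where I expect to spend the most care, is exactly this last step—pinning down the $2$-adic valuation of $(\mu\ast\sequencetwo_{k})(2^{\ell})$, together with the small case split on $k\bmod4$—unless a single uniform witness $n$ can be found.
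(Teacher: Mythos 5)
Your proofs of part (a), part (b), the divisibility $\failure(\sequencetwo_k)\divides(k-1)!$, and the sign condition needed for almost-realizability are all correct and complete. The congruence $\stirlingone{p+k-1}{k}\equiv-1\pmod p$ for primes $p>k$ gives (a) cleanly; the partial-fraction decomposition $\stirlingtwo{n+k-1}{k}=\sum_{i=1}^k A_i\,i^n$ with $(k-1)!\,A_i=(-1)^{k-i}\binom{k-1}{i-1}i^{k-2}\in\Z$ gives the bound on $\failure$ via Corollary~\ref{croollaryWhereYouTheLastPersionToKnow} and Lemma~\ref{lemmaConstructingDoldSequences}; and the inductive estimate $\stirlingtwo{M}{k}\ge\sum_{m=k}^{M-1}\stirlingtwo{m}{k}$ does deliver the sign condition. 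All of this is fine.

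The genuine gap, which you flag yourself, is the non-realizability of $\sequencetwo_k$ for $k\equiv 1,2\pmod 4$ with $k\ge5$. Your $n=2$ computation handles $k\equiv 0,3\pmod4$, but the $2$-adic sketch at $n=2^\ell$ does not close the remaining cases, and moreover is off-target as stated. Writing $f(i)=v_2(i^2-1)-v_2((i-1)!)-v_2((k-i)!)$ for the $\ell$-independent part of the valuation of the $i$th odd summand in $\sum_i A_i(i^{2^\ell}-i^{2^{\ell-1}})$, one finds $f(3)-f(5)=2-v_2\bigl((k-3)(k-4)\bigr)$, which is positive exactly when $k\equiv1,2\pmod4$: so in precisely the cases you still need, it is the $i=5$ term (not $i=3$) that has the smaller valuation, and your claimed minimum is wrong. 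Worse, ties occur and do produce cancellation at leading order: for $k=9$ one has $f(5)=f(9)=-3$, and since any two $2$-adic units sum to an element of $2\Z_2$, the combined contribution of $i=5$ and $i=9$ has valuation strictly greater than $\ell-5$. So as written the argument does not rule out $v_2\bigl((\mu\ast\sequencetwo_k)(2^\ell)\bigr)\ge\ell$ for all $\ell$ for some of these $k$. To finish, you would need to push the cancellation analysis to the next $2$-adic digit, or produce a uniform witness $n$, or switch to a different prime $p\le k-1$ for the stubborn residue classes; none of these is in the proposal, so part (c) is incomplete.
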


Computing the repair factor~$\failure\bigl(\sequencetwo_k\bigr)$
is problematic, because {\it{a priori}} it involves
an unbounded calculation.
Calculations suggest that
\[
\failure\bigl(\sequencetwo_k\bigr)
=
\lcm
\(\{
\denom\((\tfrac1n\mu*A)(n)\)\mid1\le n\le k^{\sharp}
\}\)
\]
where~$k^{\sharp}$ is the
largest prime power less than~$k$,
but this is not proved.
What little
is known about the sequence
\[
\bigl(\failure\bigl(\sequencetwo_k\bigr)\bigr)_{k\ge1}
=(1, 1, 2, 6, 12, 60, 30, 210, 840, 2520, 1260,\dots)
\]
is recorded in the entries~A341617
and~A341991 of the Online Encyclopedia of Integer
Sequences~\cite{OEIS}.

\subsection{Continuous Maps}\label{sectionContinuousMaps}

A simple observation from~\cite{MR1873399} is that
an arbitrary sequence of closed orbit
counts~$(o_n)$, when viewed
as a set, can be given a compact
topology for which the map is a
homeomorphism. This is easiest to see if there
is a fixed point as this can be placed `at
infinity'. Returning to Figure~\ref{figtwo},
we can simply locate the fixed point at the origin,
squeeze the closed orbits into some
bounded subset of~$\R^2$, and define the compact
space to be the union of the orbits as illustrated
in Figure~\ref{figthree}, to produce
a continuous map with (in this case)~$n$ closed
orbits of length~$n$ for each~$n\in\N$.

\begin{figure}[htbp]
      \begin{center}
\scalebox{0.58}{\includegraphics{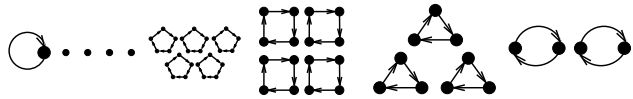}}
\caption{\label{figthree}Building a
continuous map with~$n$ closed
orbits of length~$n$ for each~$n$.}
\end{center}
\end{figure}

\begin{lemma}\label{lemmafirstcontinuousconstruction1}
A sequence~$(a_n)$ with~$a_n\in\N_0$ is
realised by a homeomorphism of a compact
metric space if and only if it
satisfies~\eqref{equationMainCondition}.
\end{lemma}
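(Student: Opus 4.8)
The plan is to prove the two implications separately; one is immediate, while the content lies in reconstructing a dynamical system from the combinatorial data. For necessity, suppose $(a_n)$ is realised by a homeomorphism $T$ of a compact metric space $X$. Then $T\colon X\to X$ is a map with finite periodic-point counts (indeed $n\orbit_T(n)\le\fix_T(n)=a_n<\infty$), so Lemma~\ref{lemmaFundamentalRelationship} applies and gives $\orbit_T(n)=\frac1n\sum_{d\smalldivides n}\mu(n/d)a_d\in\N_0$, which is precisely~\eqref{equationMainCondition}.

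For sufficiency, assume~\eqref{equationMainCondition} and set $o_n=\frac1n\sum_{d\smalldivides n}\mu(n/d)a_d\in\N_0$, the prescribed number of closed orbits of length~$n$. If every $o_n$ vanishes, take the empty space (or, if a non-empty model is preferred, an irrational rotation of a circle); if $\sum_n o_n<\infty$, the disjoint union of the finitely many prescribed finite closed orbits is already a finite, hence compact, metric space on which the cyclic map is trivially a homeomorphism. So assume there are infinitely many orbits, necessarily of infinitely many distinct lengths since each $o_\ell$ is finite, and list them as $\tau_1,\tau_2,\dots$ with lengths $\ell_1\le\ell_2\le\cdots\to\infty$. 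When $a_1=o_1\ge1$ I would argue as in Figure~\ref{figthree}: place a fixed point $\ast$ at the origin of $\R^2$, realise $\tau_j$ as the vertices of a regular $\ell_j$-gon of radius $2^{-j}$ cyclically permuted by~$T$ (a closed orbit of length exactly $\ell_j$), and let $X$ be the closure of the union of these orbits, which merely adjoins $\ast$. Each orbit point is then isolated in $X$, and since any neighbourhood of $\ast$ contains all but finitely many of the orbits, each of which $T$ maps onto itself, the map $T$, extended by fixing $\ast$, is a homeomorphism of the compact metric space $X$; and $\fix_T(n)=\sum_{d\smalldivides n}d\,\orbit_T(d)=a_n$ by~\eqref{equationfixintermsoforbits}.

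The case $a_1=0$ is the crux: there is no fixed point to serve as the accumulation point, and one cannot let the long orbits pile up onto a single short orbit, since a brief equivariance argument shows that if a closed orbit of length $\ell$ lies within a small enough distance of a finite invariant set of length~$m$ then the nearest-point assignment, being $T$-equivariant, forces $m\smalldivides\ell$. Instead I would accumulate the long orbits onto an irrational rotation $M=\S^1$ of a circle: this $M$ has no periodic points at all, yet for every $\ell$ it carries $\ell$ distinct points that differ, step by step, from a genuine orbit segment $\phi,\phi+\alpha,\dots,\phi+(\ell-1)\alpha$ by only $O(1/\ell)$ and close up after $\ell$ steps --- one spreads the closing discrepancy $-\ell\alpha\bmod1$, of absolute value at most $\tfrac12$, evenly over the $\ell$ steps, with an arbitrarily small generic perturbation to keep the $\ell$ points distinct. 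Realising $\tau_j$ as such a configuration on the circle of radius $1+2^{-j}$, cyclically permuted by~$T$ with least period exactly $\ell_j$, and setting $X=M\cup\bigsqcup_j\tau_j\subset\R^2$, one obtains a compact metric space in which every $\tau_j$-point is isolated; since the only orbits coming close to $M$ are the $\tau_j$ for large $j$, for which $\ell_j$ is large and the step-discrepancy $O(1/\ell_j)$ correspondingly small, the map $T$ extends continuously, with continuous inverse, to all of $X$ and restricts to the rotation on~$M$. As $M$ contributes no periodic points, $\fix_T(n)=\sum_{d\smalldivides n}d\,\orbit_T(d)=a_n$ again by~\eqref{equationfixintermsoforbits}.

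The main obstacle, as I see it, is the $a_1=0$ construction: carefully checking that the extended map is a homeomorphism --- continuity and continuity of the inverse at the points of $M$, together with the bookkeeping that keeps the $\ell_j$ points of $\tau_j$ distinct and its least period equal to $\ell_j$ --- and making precise the equivariance argument that rules out the easier-looking alternatives. Necessity, the degenerate cases, the fixed-point construction, and the final identification of $\fix_T(n)$ with $a_n$ should all be routine once the topologies are fixed.
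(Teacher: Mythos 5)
Your proof is correct, and it is in fact more complete than the paper's own treatment. Necessity is exactly Lemma~\ref{lemmaFundamentalRelationship}, as you say. For sufficiency, the paper only sketches the construction of Figure~\ref{figthree} (closed orbits placed on circles of shrinking radius accumulating on a fixed point at the origin), explicitly notes that this is ``easiest to see if there is a fixed point'', and otherwise defers to~\cite{MR1873399}; so the fixed-point case you give is the paper's argument, and the rest is material the paper leaves out. You correctly isolate the case $o_1=0$ with infinitely many orbits as the real crux: a one-point compactification would manufacture an unwanted fixed point, and accumulating the long orbits onto some prescribed short orbit would constrain the space in ways that need not be available. Your remedy---letting the orbits accumulate onto an irrational circle rotation, realising the orbit of length~$\ell_j$ on the circle of radius~$1+2^{-j}$ as $\ell_j$ distinct points whose successive angular gaps differ from~$\alpha$ by~$O(1/\ell_j)$---is sound: every orbit point is isolated, continuity of $T$ and $T^{-1}$ at a point of the unit circle holds because nearby orbit points lie on some~$\tau_j$ with~$j$ (hence~$\ell_j$) large so the step discrepancy is small, and since the rotation contributes no periodic points one recovers $\fix_T(n)=\sum_{d\smalldivides n}d\,\orbit_T(d)=a_n$. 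Two minor points worth tightening. First, in the case $o_1\ge 1$ the fixed point placed at the origin should be \emph{one of} the $o_1$ prescribed fixed points, not an additional one, or you will overcount $\fix_T(1)$ by one; as written the list $\tau_1,\tau_2,\dots$ together with the separate~$\ast$ appears to include it twice. Second, the motivating aside that an equivariant nearest-point projection onto a short limit orbit forces a divisibility constraint is informal (the nearest-point map need not be $T$-equivariant without additional hypotheses on~$T$), but it plays no logical role in the construction and can be dropped or rephrased as heuristics.
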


Even at this level of generality one has to respect
topological constraints. The spaces constructed above
are zero-dimensional, which permits
the extreme flexibility in the number of orbits.
On a specific topological space there are additional
conditions forced onto the sequence by
the global topology, {which will be discussed further
in Section~\ref{Indseq}}. Two examples
of this will illustrate some of what may
arise.

\begin{example}
A continuous map on a disk must have a fixed point by
Brouwer's theorem.
\end{example}

\begin{example}
A continuous map~$f\colon[0,1]\to[0,1]$
must obey\index{Sarkovskii@\v{S}arkovs\cprime ki\u{\i}'s theorem}
\v{S}arkovs\cprime ki\u{\i}'s theorem~\cite{MR0159905}.
This shows there is a total order~$\prec$
on~$\N$ with the property that if~$m\prec n$
then the existence of a point of {minimal} period~$m$
for~$f$ implies the existence of a point
of {minimal} period~$n$ for~$f$, and that this
order has the following shape:
\[
3\prec
5\prec 7\prec\cdots\prec 3\cdot 2\prec 5\cdot 2\prec\cdots\prec 3\cdot
2^2\prec 5\cdot 2^2\prec\cdots\prec 2^3\prec 2^2\prec 2\prec 1.
\]
\end{example}

\subsection{Smooth Maps}
\label{sectionSmoothMaps}

It is less clear how to create
smooth models. The following result
requires a delicate construction, \index{smooth model}
and this was done by Windsor.

\begin{theorem}[Windsor~\cite{MR2422026}]\label{theoremWindsor}
A sequence~$(a_n)$ with~$a_n\in\N_0$ is
realised by a~$C^{\infty}$
map of the~$2$-torus if and only if it
satisfies~\eqref{equationMainCondition}.
\end{theorem}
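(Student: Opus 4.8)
The plan is to prove the two directions separately. The necessity of condition~\eqref{equationMainCondition} is immediate: if $(a_n)$ is realized by \emph{any} map $T\colon X\to X$ with finitely many periodic points of each period, then Lemma~\ref{lemmaFundamentalRelationship} forces $\orbit_T(n)=\frac1n\sum_{d\divides n}\mu(n/d)a_d\in\N_0$ for all $n$, which is exactly~\eqref{equationMainCondition}. This applies in particular to a $C^\infty$ self-map of the $2$-torus, so there is nothing further to do here. The entire content is the converse: given a sequence $(a_n)$ with $a_n\in\N_0$ satisfying~\eqref{equationMainCondition}, construct a $C^\infty$ map $f\colon\T^2\to\T^2$ with $\fix_f(n)=a_n$ for all $n\ge1$.

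For the converse I would reduce, via~\eqref{equationMainCondition}, to the orbit data: set $o_n=\orbit_T(n)=\frac1n\sum_{d\divides n}\mu(n/d)a_d\in\N_0$, so the target is a smooth map of $\T^2$ having exactly $o_n$ periodic orbits of least period $n$ and no other periodic points. The natural strategy is to build $f$ one periodic orbit at a time as a perturbation supported on a small disk (or a union of small disks cyclically permuted), so that each elementary piece contributes a single periodic orbit of a prescribed least period $n$ and \emph{nothing else}. Concretely: start from a smooth map of $\T^2$ with a single fixed point (accounting for one orbit of length $1$, if $o_1\ge1$), and then, for each $n$ and each of the $o_n$ orbits to be installed, pick $n$ pairwise disjoint tiny disks $D_0,\dots,D_{n-1}$ (disjoint also from all previously used disks and from the support of all previous modifications), and modify $f$ so that it cyclically maps $D_j\to D_{j+1\bmod n}$, with the first-return map $D_0\to D_0$ being a smooth contraction fixing exactly the center. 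This inserts precisely one orbit of least period $n$. Outside the union of all chosen disks one keeps the map equal to the (arbitrary, e.g.\ constant or fixed-point-carrying) background map, which must itself be chosen to have no spurious periodic points in the region where the construction has not yet acted. A clean way to arrange the background is to push everything toward a single attracting fixed point and carve the periodic orbits out of its basin.

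The hard part — and the reason the theorem is due to Windsor and required a delicate argument — is the \emph{smoothness and convergence} of this infinite process. One must (i) choose the disks and the bump functions so that the perturbations have uniformly controlled $C^k$ norms for each $k$, and shrink the disks fast enough that the infinitely many modifications accumulate only at a single point (or a controlled closed set) where the resulting map is still $C^\infty$ — this is a Whitney-type gluing argument, assembling $f$ as a $C^\infty$-convergent series of compactly supported perturbations; (ii) ensure no \emph{new} periodic points are created by the interaction of the pieces, which is why the supports must be pairwise disjoint and why each first-return map is taken to be a genuine contraction (so its only periodic point is its fixed point, with no periodic points appearing on the boundary or in transit); and (iii) handle the limit point of the shrinking disks, showing $f$ extends smoothly there with that point either a fixed point already counted or not periodic, and in any case not producing extra periodic orbits. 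I would present the background-plus-local-surgery scheme and the $o_n$-counting bookkeeping in full, and then invoke~\cite{MR2422026} for the technical $C^\infty$ estimates that make the infinite assembly legitimate, since reproducing Windsor's construction in detail is beyond the scope of a survey; the point to emphasize is that~\eqref{equationMainCondition} is exactly the combinatorial obstruction and that, remarkably, no further obstruction is imposed by requiring the model to be a smooth map of a fixed $2$-dimensional manifold.
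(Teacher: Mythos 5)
The paper does not prove this theorem: it attributes it to Windsor~\cite{MR2422026} and offers only a two-sentence summary, namely that the construction ``builds closed orbits of given length using rotations'' and that the difficulty is avoiding the creation of spurious orbits in the limiting process. There is therefore no detailed proof in the paper for your proposal to be compared against, and your sketch is at roughly the same level of detail as the paper's own remark. Your necessity direction is correct and is exactly the application of Lemma~\ref{lemmaFundamentalRelationship} the paper intends. Your description of the converse --- local surgery on shrinking disjoint disks, with Whitney-type estimates controlling the $C^k$ norms of the countably many modifications, and with care taken at the accumulation point --- correctly identifies both the shape of the argument and the genuine technical obstacles, and you sensibly defer the hard estimates to Windsor's paper, which is what a survey would do.

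Two points of divergence are worth flagging. First, your local pieces are smooth contractions on disks, whereas the paper (and Windsor) frames the construction in terms of rotations; both devices plausibly produce a single orbit of a prescribed least period without by-products, but rotations are what the cited construction actually uses, and the choice matters for exactly the reason you raise in point (ii): one must rule out new periodic points created where the modified map interpolates back to the background, and rotations give a cleaner handle on this. Second, and more substantively, your proposed background (``push everything toward a single attracting fixed point and carve the orbits out of its basin'') silently forces $o_1=\orbit(1)\ge 1$. This is not always the case: for example, the sequence $a_n=\eper_2(n)$, i.e.\ $a_n=2$ for $n$ even and $a_n=0$ for $n$ odd, satisfies~\eqref{equationMainCondition} with $o_1=0$ and $o_2=1$, yet your background map would introduce an unwanted fixed point. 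To cover such sequences the background (and the accumulation set of the shrinking disks) has to be arranged to carry no periodic points at all, which requires more care than your sketch allows for; this is one of the places where the delicacy of Windsor's argument lies.
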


The start of this construction is straighforward ---
to build closed orbits of given length using rotations.
The challenge is to ensure that no new
orbits are created in the limiting processes
involved.

\begin{problem}[Smooth models in general]
{Is it true that for every smooth manifold~$M$
of dimension at least~$2$ a
sequence~$(a_n)$ with~$a_n\in\N_0$ is
realised by a~$C^{\infty}$
map of~$M$}
if and only if it
satisfies~\eqref{equationMainCondition}
and the consequences of the
Lefschetz fixed point theorem?\index{Lefschetz fixed point theorem}
\end{problem}

\section{Lefschetz Sequences}
\label{sectionLefschetzSequences}

\begin{definition}\label{definitionLefschetzSequence}
A sequence of integers~$(a_n)$ is
a \emph{Lefschetz sequence}\index{Lefschetz sequence}\index{sequence!Lefschetz}
if there exist
square integer matrices~$A$ and~$B$
such that
\[
a_n=\trace A^n-\trace B^n
\]
for all~$n\in\N$.
\end{definition}

Expressing the trace in terms of the eigenvalues
of the matrices gives the following
characterization.

\begin{proposition}\label{formLefschetz}
A sequence of integers~$(a_n)$
is a Lefschetz sequence if and only if
there exist algebraic
numbers~$\lambda_1,\dots,\lambda_s$
and integers~$m_1,\dots,m_s$
such that
\[
a_n=\sum_{i=1}^s m_i \lambda_i^n
\]
for all~$n\in\N$.
\end{proposition}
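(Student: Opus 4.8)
The plan is to derive both implications from the passage between a matrix and its eigenvalues, the only non-formal ingredient being an integrality statement in the reverse direction.

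For the forward implication one writes $a_n=\trace A^n-\trace B^n$ with $A,B$ integer matrices, triangularizes $A$ and $B$ over $\overline{\Q}$, and obtains $\trace A^n=\sum_j\alpha_j^n$ and $\trace B^n=\sum_k\beta_k^n$, where $\alpha_j$ and $\beta_k$ run over the eigenvalues listed with algebraic multiplicity; these are algebraic integers, being roots of monic integer characteristic polynomials. Collecting the distinct values among them into a list $\lambda_1,\dots,\lambda_s$ and setting $m_i=\#\{j:\alpha_j=\lambda_i\}-\#\{k:\beta_k=\lambda_i\}\in\Z$ yields $a_n=\sum_{i=1}^s m_i\lambda_i^n$.

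For the reverse implication, start from an integer-valued $(a_n)$ with $a_n=\sum_i m_i\lambda_i^n$, $m_i\in\Z$; after merging equal $\lambda_i$ and deleting terms with $m_i=0$ or $\lambda_i=0$ we may assume the $\lambda_i$ are distinct and nonzero and all $m_i\neq0$. Then the generating function $\sum_{n\ge1}a_nz^n=\sum_i m_i\lambda_i z/(1-\lambda_i z)$ is a rational function whose denominator in lowest terms is $\prod_i(1-\lambda_i z)$, since none of the simple poles $z=\lambda_i^{-1}$ is cancelled. Because all $a_n\in\Z$, Fatou's lemma on rational power series with integer coefficients lets us represent this function as $P(z)/Q(z)$ with $P,Q\in\Z[z]$ coprime and $Q(0)=1$; by uniqueness of that normalization $Q(z)=\prod_i(1-\lambda_i z)$, so the reciprocal polynomial $\prod_i(z-\lambda_i)$ is monic with integer coefficients, and every $\lambda_i$ is an algebraic integer. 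I expect this algebraic-integer step to be the crux; the remainder is formal.

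Granting it, the argument finishes as follows. Uniqueness of the representation (Vandermonde) and $a_n\in\Q$ force every automorphism of $\overline{\Q}$ to permute the $\lambda_i$ while fixing the corresponding $m_i$ (here $m_i\in\Q$ is used), so $m_i$ takes a single value $m^O\in\Z$ on each Galois orbit $O$ of the $\lambda_i$. For each orbit the companion matrix $C_O$ of the minimal polynomial over $\Q$ of any of its elements is an integer matrix (that polynomial is monic with integer coefficients, its roots being algebraic integers) with $\trace C_O^n=\sum_{\lambda\in O}\lambda^n$, so that $a_n=\sum_O m^O\,\trace C_O^n$. Taking $A$ to be a block-diagonal integer matrix assembled from $m^O$ copies of $C_O$ over all orbits with $m^O>0$, and $B$ from $|m^O|$ copies of $C_O$ over all orbits with $m^O<0$ (and a $1\times1$ zero block if either collection is empty), gives $\trace A^n-\trace B^n=a_n$, exhibiting $(a_n)$ as a Lefschetz sequence.
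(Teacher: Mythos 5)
Your proof is correct and complete. The paper gives no proof of Proposition~\ref{formLefschetz}; it is preceded only by the remark that the characterization follows from expressing the trace in terms of the eigenvalues of the matrices. Your forward direction is precisely that observation, with the integer weights $m_i$ arising as signed differences of eigenvalue multiplicities. The reverse direction is where you supply an argument the paper leaves implicit, and this is genuinely needed: the hypothesis gives only algebraic numbers $\lambda_i$, whereas building integer companion matrices requires algebraic integers. The key step, showing that integrality of $(a_n)$ together with non-cancellation of the poles of $\sum_i m_i\lambda_i z/(1-\lambda_i z)$ forces the reciprocal of $\prod_i(1-\lambda_i z)$ to be a monic polynomial in $\Z[z]$, is exactly Fatou's lemma for integer-coefficient rational power series, and without it one cannot produce integer matrices. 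Your Galois-orbit decomposition --- grouping the $\lambda_i$ into full conjugacy classes with a constant weight $m^O$ on each orbit and assembling block-diagonal integer matrices from companion blocks --- is the natural way to finish. Since the paper's proof of Theorem~\ref{Lefschetz} invokes the reverse direction of this proposition with $\lambda_i$ presented merely as roots of rational-coefficient polynomials $P$ and $Q$, the Fatou step you include is what makes that appeal legitimate.
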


It is clear from
Corollary~\ref{croollaryWhereYouTheLastPersionToKnow}
that a Lefschetz sequence is a Dold sequence.

We recall that a sequence~$(c_n)$ (taking
values in a field) is a \emph{linear recurrence
sequence} if there exists
an integer~$p$
(the \emph{order} of the recurrence)
and constants
\[
\alpha_0,\ldots,\alpha_{p-1}
\]
{with~$\alpha_0\neq0$}
such that
\[
c_{n+p}=\alpha_{p-1} c_{n+p-1} +\ldots +\alpha_0 c_n
\]
for all~$n\geq 1$.
In applications,
it may happen that the recurrence is only satisfied
for~$n\geq n_0$, in which case the sequence nevertheless
satisfies a linear
recurrence with~$p'=p+n_0-1$,~$\alpha'_i = \alpha_{i-n_0}$
for~$i\geq n_0$ and~$\alpha'_i=0$ otherwise. The following lemma is
well-known (it may be found, for example
in the monograph of Salem~\cite{MR0157941}; we
also refer to~\cite{MR1990179} for results
of this sort and an extensive guide to the literature
on recurrence sequences).

\begin{lemma}\label{Salem1}
A power series
\begin{equation}\label{series}
f(z)=\displaystyle\mathop{\sum}_{n=0}^{\infty}c_nz^n
\end{equation}
with {coefficients in a field $\K$}
represents a rational function~$\frac{P(z)}{Q(z)}$
(where~$P$ and~$Q$ are polynomials) {if and only if}
its coefficients satisfy a linear recurrence relation.
Furthermore, the order of the recurrence is at most~$\max(\deg P, \deg Q)$.
\end{lemma}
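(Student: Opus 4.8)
The plan is to prove the equivalence by reading off coefficients from the identity $Q(z)f(z)=P(z)$, which translates a polynomial relation into a linear recurrence and back; no substantial input is needed beyond careful bookkeeping of degrees and indices. I would split the argument into the two implications, and track degrees throughout to get the stated bound $\max(\deg P,\deg Q)$.

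For the direction ``rational $\Rightarrow$ recurrence'', suppose $f(z)=P(z)/Q(z)$. First I would reduce to the case $Q(0)\neq0$: writing $z^k$ for the exact power of $z$ dividing $Q$, the equation $Qf=P$ shows that $(Q/z^k)f$ is a formal power series, so $z^k$ divides $P$ as a polynomial; cancelling $z^k$ replaces $P,Q$ by polynomials of no larger degree with $Q(0)\neq0$. Now write $Q(z)=\sum_{i=0}^{p}q_iz^i$ with $p=\deg Q$, $q_0\neq0$, $q_p\neq0$, and $P(z)=\sum_{i=0}^{d}p_iz^i$. Comparing the coefficient of $z^n$ on both sides of $Q(z)f(z)=P(z)$ gives $\sum_{i=0}^{p}q_ic_{n-i}=0$ for every $n>d$, and since $q_0\neq0$ this rearranges to a linear recurrence of order $p$ satisfied by the tail of $(c_n)$. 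If $d<p$ the recurrence is valid for all admissible indices, so the order is $\deg Q$; if $d\ge p$ it is valid only from some index $n_0=d-p+1$ onward, and I would invoke the remark preceding the lemma to convert it into a recurrence valid for all $n\ge1$ at the cost of raising the order to $n_0+p-1=d=\deg P$. In either case the order is at most $\max(\deg P,\deg Q)$.

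For the converse, suppose $(c_n)$ satisfies $c_{n+p}=\alpha_{p-1}c_{n+p-1}+\cdots+\alpha_0c_n$ for all $n\ge1$. Set $Q(z)=1-\alpha_{p-1}z-\cdots-\alpha_0z^p$ and compute the coefficient of $z^m$ in $Q(z)f(z)$: for $m\ge p+1$ it equals $c_m-\alpha_{p-1}c_{m-1}-\cdots-\alpha_0c_{m-p}$, which vanishes by the recurrence (put $n=m-p$). Hence $Q(z)f(z)=P(z)$ for some polynomial $P$ with $\deg P\le p$, so $f=P/Q$ is a rational function with $\max(\deg P,\deg Q)\le p$, completing the equivalence and the degree estimate.

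None of this is hard; the one point I would be most careful about is the index arithmetic in the forward direction --- matching ``the recurrence holds from $n_0$ on'' against the lemma's convention that recurrences run from $n\ge1$, and verifying that the resulting bump in order lands exactly on $\max(\deg P,\deg Q)$ rather than something larger. I would also dispose of the trivial degenerate cases ($f$ a polynomial, or $f\equiv0$) explicitly, since there the ``recurrence'' is an eventually-zero relation and the order bound needs the obvious convention about such relations to hold literally.
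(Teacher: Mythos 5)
The paper does not prove this lemma; it simply cites it as well known from Salem's monograph and the survey on recurrence sequences, so there is no internal proof to compare against. Your argument is the standard and correct one, by reading off coefficients from $Q(z)f(z)=P(z)$, and it handles the two points that actually need care: the preliminary reduction to $Q(0)\neq0$ (which you justify cleanly by factoring out the exact power of $z$ and showing it must divide $P$), and the index arithmetic needed to land on $\max(\deg P,\deg Q)$ rather than $\max(\deg P+1,\deg Q)$. That last point depends on the paper's convention that the recurrence $c_{n+p}=\alpha_{p-1}c_{n+p-1}+\cdots+\alpha_0c_n$ need only hold for $n\ge1$, not $n\ge0$; you use that convention consistently, and your computation $n_0+p-1=d$ for the case $\deg P\ge\deg Q$ is exactly right. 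One small caveat worth being aware of: the remark preceding the lemma, which you invoke to convert an eventually-valid recurrence into a globally-valid one, produces $\alpha'_0=0$ whenever $n_0>1$, which technically clashes with the $\alpha_0\neq0$ normalisation in the paper's definition of order; this is an inconsistency in the paper's own conventions rather than in your proof, and it does not affect the stated bound. The converse direction and the accompanying bound $\max(\deg P,\deg Q)\le p$ are also correct.
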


It is clear that Lefschetz sequences constitute a proper subset of
the set of all Dold sequences.
There are many ways to see this, including the following.
\begin{itemize}
\item {\bf{Construction:}} There are dynamical systems~$(X,T)$
with the property that~$\sum_{n=1}^{\infty}
\fix_T(n)z^n$ is far from rational, each
of which gives a Dold sequence that is not Lefschetz.
\item {\bf{Symmetry:}} As
discussed in Section~\ref{sectionTimeChangesPreservingRealizability},
there is a notion
of `symmetry' in the space of all zeta
functions that shows, for example,
that if~$(a_n)$ is a realizable sequence
then~$(a_{n^2})$ is also a realizable sequence.
These are `symmetries' of the space of realizable
(and hence of Dold) sequences that clearly do not
preserve the Lefschetz property
(see Jaidee {\it{et al.}}~\cite{MR4002553}).
\item {\bf{Cardinality:}}
There are only countably many sequences that arise as
the difference of the traces of powers of integer matrices, but
there are uncountably many Dold sequences.
\end{itemize}

\begin{theorem}\label{Lefschetz}
A sequence of integers~$(a_n)$ is a Lefschetz
sequence
if and only if~$(a_n)$ is a Dold sequence
with the property that its
generating sequence~$(c_n)$ is a linear
recurrence sequence.\end{theorem}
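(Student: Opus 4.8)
The plan is to route both implications through the generating function $G(z)=\exp\bigl(\sum_{n\ge1}\tfrac{a_n}{n}z^n\bigr)$, which by Theorem~\ref{theoremEulertransforms} equals $\bigl(1-\sum_{n\ge1}c_nz^n\bigr)^{-1}$. The bridge I would establish first is: $(c_n)$ is a linear recurrence sequence if and only if $\sum_{n\ge1}c_nz^n$ is a rational function (Lemma~\ref{Salem1}, applied to the series $\sum_{n\ge0}c_nz^n$ with $c_0=0$), if and only if $1-\sum_{n\ge1}c_nz^n$ is rational, if and only if $G(z)$ is a rational function --- the last equivalence because $G(0)=1$, so $G$ and its reciprocal $1-\sum c_nz^n$ are simultaneously honest power series and simultaneously rational. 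Thus the theorem reduces to showing that, for an integer sequence, being a Lefschetz sequence is equivalent to being a Dold sequence with $G(z)$ rational.

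For the forward implication I would assume $(a_n)$ is a Lefschetz sequence; it is then a Dold sequence by the remark preceding the theorem (via Corollary~\ref{croollaryWhereYouTheLastPersionToKnow}). By Proposition~\ref{formLefschetz} write $a_n=\sum_{i=1}^s m_i\lambda_i^n$ with $m_i\in\Z$ and $\lambda_i$ algebraic, discarding any $\lambda_i=0$. Using the Taylor series of the logarithm, $\sum_{n\ge1}\tfrac{a_n}{n}z^n=-\sum_i m_i\log(1-\lambda_i z)$, hence $G(z)=\prod_{i=1}^s(1-\lambda_i z)^{-m_i}$, which is a ratio of two polynomials because each $m_i$ is an integer. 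So $G(z)$ is rational, and by the bridge $(c_n)$ is a linear recurrence sequence.

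For the converse I would assume $(a_n)$ is a Dold sequence (so $a_n\in\Z$ for all $n$, and in particular the Taylor coefficients of $G$ lie in $\Q$) whose generating sequence $(c_n)$ is a linear recurrence sequence. By the bridge, $G(z)$ is rational; since its Taylor coefficients are rational and $G(0)=1$, we may write $G(z)=S(z)/R(z)$ with $R,S\in\Q[z]$ coprime and $R(0)=S(0)=1$, and then factor $R(z)=\prod_k(1-\mu_kz)$ and $S(z)=\prod_\ell(1-\nu_\ell z)$ over $\overline{\Q}$ with multiplicity. Taking logarithms and expanding gives $\sum_{n\ge1}\tfrac{a_n}{n}z^n=\sum_\ell\sum_{n\ge1}\tfrac{\nu_\ell^n}{n}z^n-\sum_k\sum_{n\ge1}\tfrac{\mu_k^n}{n}z^n$, so $a_n=\sum_\ell\nu_\ell^n-\sum_k\mu_k^n$; collecting equal algebraic numbers yields $a_n=\sum_i m_i\lambda_i^n$ with $m_i\in\Z$ and $\lambda_i$ algebraic, so $(a_n)$ is a Lefschetz sequence by Proposition~\ref{formLefschetz}. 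There is no serious obstacle here: the only point needing care is the bookkeeping in the converse, namely that a rational function with value $1$ at the origin factors as a finite product $\prod_i(1-\lambda_i z)^{m_i}$ with algebraic $\lambda_i$ and integer (possibly negative) exponents $m_i$ --- the integrality of the exponents is automatic since they are multiplicities of roots --- so that taking logarithms produces exactly the shape of Proposition~\ref{formLefschetz} with integer coefficients.
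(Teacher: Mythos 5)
Your proposal is correct and follows essentially the same route as the paper: both use the generating-function identity from Theorem~\ref{theoremEulertransforms} together with Lemma~\ref{Salem1} to translate between linear recurrence of $(c_n)$ and rationality of the series, compute $\prod_i(1-\lambda_i z)^{m_i}$ in the forward direction, and reverse the computation after factoring the rational function in the converse. The only differences are presentational: you phrase the equivalence as an explicit ``bridge'' through $G(z)$ rather than its reciprocal $1-\sum c_nz^n$, and you spell out a step the paper states more tersely (that the Taylor coefficients of $G$ lie in $\Q$, hence $G$ is a ratio of polynomials in $\Q[z]$, hence the $\lambda_i$ are algebraic).
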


\begin{proof}
Assume first that~$(a_n)$ is a Lefschetz sequence
(and hence a Dold
sequence),
so that we may write~$a_n=\sum_{i=1}^s m_i
\lambda_i^n$ for all~$n\ge1$.
From Theorem \ref{theoremEulertransforms}
we can calculate

\begin{align*}
\Bigl(1-\sum_{n\geq 1} c_n z^n\Bigr)
&=
\exp\Bigl(-\sum_{n\geq 1}\frac{a_n}{n}z^n\Bigr)
=
\prod_{i=1}^s\exp\Bigl(-m_i\sum_{n\geq 1}\frac{\lambda_i^n}{n}z^n\Bigr)\\ &=
\prod_{i=1}^s
\exp\bigl(m_i\log(1-\lambda_i z)\bigr)
=
\prod_{i=1}^s\bigl(1-\lambda_i z\bigr)^{m_i},
\end{align*}
where the last three equalities are formal.
We
deduce
that the generating function~$\sum_{n\geq 1}c_n z^n$
is rational,
so
by Lemma~\ref{Salem1} the sequence~$(c_n)$ satisfies a linear
recurrence relation.

This reasoning can be inverted:
if the generating sequence~$(c_n)$ is linear recurrent, then by Theorem~\ref{Salem1} the series $1-\sum_{n\geq 1} c_n z^n$ represents a
rational function~$\frac{P(z)}{Q(z)}$ (where~$P$ and~$Q$ are polynomials with rational coefficients), and since its constant term is~$1$,
we may factorize it over the complex numbers as
\[
1-\sum_{n\geq 1} c_n z^n = \prod_{i=1}^s\bigl(1-\lambda_i z\bigr)^{m_i}
\]
for complex numbers~$\lambda_i$ and integers~$m_i$. Repeating the previous formal computations gives~$a_n=\sum_{i=1}^s m_i \lambda_i^n$. Since the numbers~$\lambda_i$ arose as roots of~$P$ and~$Q$, they are algebraic numbers, and hence~$(a_n)$ is a
Lefschetz sequence by Proposition~\ref{formLefschetz}.
\end{proof}

\subsection{Generating sequences for Lefschetz sequences}

Generating sequences of Lefschetz numbers were
considered by Graff {\it{et al.}}~\cite{MR3917044},
and a certain characterization of generating sequences
of Lefschetz numbers of iterations was given. Here we will describe a
simple computational criterion for verifying that a given
sequence~$(c_n)$ is not a generating sequence of a Lefschetz
sequence (provided the dimensions of  respective matrices are
bounded from above).
{We start by recalling a well-known
test for rationality in terms of
the \emph{Kronecker--Hankel determinants}.
We refer to Salem~\cite{MR0157941}
or Koblitz~\cite{MR0466081} for convenient proofs.}

\begin{lemma}\label{Salem2}
A power series~$\sum_{n\geq 0}c_n x^n$ represents a rational
function if and only if the Kronecker--Hankel determinants
\begin{equation}\label{determinant}
\Delta_m=\det
\begin{pmatrix}
c_0&c_1&\dots&c_m
\\c_1&c_2&\dots&c_{m+1}
\\\vdots&\vdots&\ddots&\vdots
\\c_m&c_{m+1}&\dots&c_{2m}
\end{pmatrix}
\end{equation}
are all zero for~$m$ large enough.
{In fact~$(c_n)$ satisfies a linear
recurrence of order~$p$ if and only
if~$\Delta_m$ vanishes for~$m\ge p$.}
\end{lemma}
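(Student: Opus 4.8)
The plan is to deduce the lemma from the classical Kronecker criterion in its sharp form: for the infinite Hankel matrix $H=(c_{i+j})_{i,j\ge 0}$ one has $\operatorname{rank}H\le p$ if and only if $\Delta_m=0$ for all $m\ge p$. Throughout I read ``$(c_n)$ satisfies a linear recurrence of order $p$'' as ``$c_{n+p}=\alpha_{p-1}c_{n+p-1}+\dots+\alpha_0c_n$ for all $n\ge 0$ and some scalars $\alpha_j$''; the difference between this and a recurrence valid only from some $n_0$ onwards is the harmless order-inflation already recorded before Lemma~\ref{Salem1}.

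Granting the criterion, both assertions are short. If $(c_n)$ satisfies such a recurrence of order $\le p$, then the $p$-th row $(c_p,c_{p+1},\dots)$ of $H$ is a fixed linear combination of rows $0,\dots,p-1$, so in each finite Hankel matrix $(c_{i+j})_{0\le i,j\le m}$ with $m\ge p$ that row is a combination of earlier ones and $\Delta_m=0$. Conversely, if $\Delta_m=0$ for all $m\ge p$ then $\operatorname{rank}H\le p$, so the rows $r_0,\dots,r_p$ of $H$ are linearly dependent as infinite vectors; a dependence among them, solved for its highest-index term, is exactly a linear recurrence of order $\le p$ valid for all $n\ge 0$. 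This gives the second sentence of the lemma, and the first is then the combination with Lemma~\ref{Salem1}: $f$ is rational exactly when $(c_n)$ satisfies some linear recurrence, which by the above holds exactly when $\Delta_m=0$ for all sufficiently large $m$.

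Everything therefore reduces to the Kronecker criterion, whose content is the implication ``$\Delta_m=0$ for all $m\ge p$'' $\Rightarrow$ ``$\operatorname{rank}H\le p$'' --- equivalently, that for a Hankel matrix the vanishing of the leading principal minors of all sufficiently large sizes already forces the vanishing of all larger minors. This is precisely where the Hankel structure is indispensable (for an arbitrary matrix the leading principal minors need not control the rank at all), and it is the crux of the proof. I would not attempt a self-contained argument here but follow Salem~\cite{MR0157941} or Koblitz~\cite{MR0466081}.

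The classical route begins by reducing to the nondegenerate case: set $r=1+\max\{m:\Delta_m\ne 0\}$ (the case of no such $m$ being immediate, since then an induction using the identity $\Delta_m=\pm c_m^{m+1}$, valid once $c_0,\dots,c_{m-1}$ are known to vanish, gives $c\equiv 0$), so that $\Delta_{r-1}\ne 0$, $\Delta_m=0$ for all $m\ge r$, and $r\le p$. From $\Delta_{r-1}\ne 0$ together with $\Delta_r=0$ one reads off a unique relation $c_{n+r}=\sum_{j<r}\alpha_jc_{n+j}$ valid for $n=0,\dots,r$, and the heart of the matter is to propagate it to all $n$, whence $\operatorname{rank}H=r\le p$. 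This propagation is the step requiring care: it is carried out by an induction driven by Desnanot--Jacobi (Dodgson condensation) identities linking $\Delta_m$, $\Delta_{m\pm 1}$ and the once- and twice-shifted Hankel minors, with the vanishing of the larger $\Delta_m$ feeding the induction --- in essence the classical theorem of Frobenius on the pattern of vanishing minors of a Hankel form. Making this watertight is the real work; granted it, the pieces above assemble into the lemma.
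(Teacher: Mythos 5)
Your proof of the direction the paper actually proves is essentially identical to the paper's: the paper observes that a recurrence of order $p$ makes the $(p+1)$-st \emph{column} of each Hankel matrix a fixed linear combination of the preceding $p$ columns, while you say the same about rows, which is the same statement since a Hankel matrix is symmetric. For the converse, the paper explicitly declines to prove it ("We will only use---and so will only prove---one implication") and refers to Salem and Koblitz for complete proofs; you sketch the Kronecker/Frobenius argument in some detail, but you candidly stop short of making the propagation step rigorous and fall back on the same references. So you are not filling a gap the paper leaves, but you are also not wrong: the honest scoping is appropriate, and your reduction to the nondegenerate case (including the $\Delta_m=\pm c_m^{m+1}$ trick when all $c_i$ up to $c_{m-1}$ vanish) is correct as far as it goes. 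The one thing worth flagging is that the phrase "granted it, the pieces above assemble into the lemma" concedes that the heart of the converse is still outsourced; the paper makes the same concession more tersely, so the two treatments are on par. Net: same approach, with your version adding a useful but incomplete roadmap for the direction the paper omits.
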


\begin{remark}\label{Hankel}
The determinant~\eqref{determinant} in Lemma~\ref{Salem2} is an
example of a Hankel determinant;
that is, a determinant of a square
matrix in which each ascending anti-diagonal is constant.
\end{remark}

\begin{proof}[Proof of Lemma~\ref{Salem2}]
We will only use---and
so will only prove---one implication, namely the fact that
if the series is rational
then its Hankel determinants of sufficiently high order
must vanish.
The relation
\[
c_{n+p}=\alpha_{p-1}c_{n+p-1}+\dots+\alpha_0 c_n
\]
shows that the~$(p+1)$-st column of the Hankel matrix
is a linear combination of the preceding~$p$ columns.
\end{proof}

\begin{theorem}
Let~$(a_n)$ be a sequence of Lefschetz numbers of
the form
\[
a_n=\trace A^n-\trace B^n
\]
for all~$n\in\N$,
where~$A\in\matrices_{k,k}(\C)$ and~$B\in\matrices_{\ell,\ell}(\C)$.
If~$(c_n)$ is the generating sequence of the sequence~$(a_n)$,
then the determinant of a Hankel
matrix~$\Delta_m$ vanishes for~$m\ge\max(k,\ell)$.
\end{theorem}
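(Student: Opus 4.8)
The plan is to identify explicitly, as a ratio of two polynomials of controlled degree, the power series whose coefficients encode the generating sequence $(c_n)$, and then to invoke the rationality criteria of Lemmas~\ref{Salem1} and~\ref{Salem2}.

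First I would compute the generating function of $(a_n)$. Let $\alpha_1,\dots,\alpha_k$ and $\beta_1,\dots,\beta_\ell$ be the eigenvalues of $A$ and of $B$ respectively, listed with multiplicity, so that $\trace A^n=\sum_i\alpha_i^n$ and $\trace B^n=\sum_j\beta_j^n$ for all $n\ge1$. Repeating the formal manipulation from the proof of Theorem~\ref{Lefschetz} (for each eigenvalue $\gamma$ one has $\exp(\sum_{n\ge1}\frac{z^n}{n}\gamma^n)=(1-\gamma z)^{-1}$) together with Theorem~\ref{theoremEulertransforms} gives
\[
1-\sum_{n\ge1}c_nz^n=\exp\Bigl(-\sum_{n\ge1}\frac{a_n}{n}z^n\Bigr)=\frac{\prod_i(1-\alpha_i z)}{\prod_j(1-\beta_j z)}=\frac{\det(I-zA)}{\det(I-zB)}=:\frac{P(z)}{Q(z)},
\]
where $P,Q\in\C[z]$ satisfy $P(0)=Q(0)=1$, $\deg P\le k$ and $\deg Q\le\ell$; this uses only that the characteristic polynomial of a complex square matrix splits over $\C$, so the possibly complex entries of $A$ and $B$ cause no trouble.

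Next I would read off the Hankel behaviour. The displayed identity exhibits the power series with coefficient sequence $1,-c_1,-c_2,\dots$ as the rational function $P(z)/Q(z)$, with numerator and denominator of degree at most $\max(k,\ell)$ and with $Q(0)=1\neq0$; by Lemma~\ref{Salem1} this coefficient sequence satisfies a linear recurrence relation of order at most $\max(k,\ell)$, and then Lemma~\ref{Salem2} yields $\Delta_m=0$ for every $m\ge\max(k,\ell)$, which is the assertion. (If instead $\Delta_m$ is to be built from the generating sequence $(c_n)_{n\ge1}$ itself, then, since $P(0)=Q(0)$, we may write $Q-P=zR$ with $\deg R\le\max(k,\ell)-1$, whence $\sum_{m\ge0}c_{m+1}z^m=R(z)/Q(z)$ is rational with $\max(\deg R,\deg Q)\le\max(k,\ell)$, and the same two lemmas apply.)

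There is no genuine analytic or combinatorial obstacle here: the argument is a short formal-power-series computation followed by two off-the-shelf rationality statements. The only point demanding care is the bookkeeping of indices and degrees---matching the indexing convention of the Hankel array~\eqref{determinant} (which begins at $c_0$) to that of the generating sequence (which begins at $c_1$), and tracking the factor of $z$ extracted from $Q-P$ so that the order of the recurrence, and hence the threshold for the vanishing of $\Delta_m$, comes out exactly as $\max(k,\ell)$.
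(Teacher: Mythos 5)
Your proof is correct and follows essentially the same route as the paper's: express the power series $1-\sum_{n\ge1}c_nz^n$ as the rational function $\det(I-zA)/\det(I-zB)$, then invoke Lemma~\ref{Salem1} to bound the order of the linear recurrence by $\max(k,\ell)$ and Lemma~\ref{Salem2} to conclude that $\Delta_m=0$ for $m\ge\max(k,\ell)$. You are actually a little more careful than the paper on the bookkeeping: the paper writes the fraction inverted (harmlessly, since only $\max(\deg P,\deg Q)$ enters), and your parenthetical observation that $\sum_{m\ge0}c_{m+1}z^m=R(z)/Q(z)$ with $\max(\deg R,\deg Q)\le\max(k,\ell)$ is exactly what is needed to reconcile the generating sequence, which begins at $c_1$, with the Hankel array~\eqref{determinant}, whose top-left entry is indexed $c_0$.
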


\begin{proof}
Let~$\lambda_i$
for~$1\leq i\leq k$ and~$\mu_j$
for~$1\leq j\leq\ell$ be the eigenvalues
of~$A$ and~$B$, respectively (with multiplicities).
Computing as in the proof of Theorem~\ref{Lefschetz}, we obtain \[
\exp\Big(-\sum_{n=1}^{\infty}{a_n\frac{x^n}{n}}\Big)
=
\frac{\prod_{j=1}^{\ell}(1-\mu_j z)}{\prod_{i=1}^k (1-\lambda_i z)}.
\]
By Lemma~\ref{Salem1}, the generating sequence satisfies a linear
recurrence of order
no more than~$\max(k,\ell)$,
and hence, by Lemma~\ref{Salem2},
the corresponding Hankel
determinants~$\Delta_m$ vanish for~$m\geq\max(k,\ell)$.
\end{proof}

\begin{example}
The sequence~$(1,3,2,4,5,7,6,8,9,\dots)$ is not a generating sequence of
a Lefschetz sequence obtained from matrices of
dimensions not more than~$4$
since
\[
\det
\begin{pmatrix}
1&3&2&4&5\\3&2&4&5&7\\2&4&5&7&6\\4&5&7&6&8\\5&7&6&8&9\end{pmatrix}
=
-256\neq0.
\]
\end{example}

\begin{lemma}\label{perDold-Lef}
Let~$(a_n)$ be a sequence of integers.
Then the following conditions are equivalent:
\begin{enumerate}[\rm(a)]
\item The sequence~$(a_n)$ is a periodic Lefschetz sequence.
\item The sequence~$(a_n)$ is a bounded Dold sequence.
\end{enumerate}
\end{lemma}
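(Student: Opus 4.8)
The plan is to handle the two implications separately; the direction~(a)$\Rightarrow$(b) is immediate, and essentially all the content lies in~(b)$\Rightarrow$(a). For~(a)$\Rightarrow$(b): a Lefschetz sequence is a Dold sequence (this follows from Corollary~\ref{croollaryWhereYouTheLastPersionToKnow}, as already noted), and a periodic integer sequence is automatically bounded since it takes only finitely many values. So nothing more is needed in this direction.

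For~(b)$\Rightarrow$(a), I would start from a bounded Dold sequence~$(a_n)$ and invoke Lemma~\ref{lemmaDoldPeriodicIffBounded} (Babenko--Bogaty\u{\i}) to conclude that~$(a_n)$ is periodic and can be written as
\[
a_n=\sum_{k\in F}b_k\eper_k(n)
\]
for some finite set~$F\subset\N$ and integers~$b_k$. It then suffices to check that every elementary periodic sequence~$\eper_k$ is a Lefschetz sequence and that the class of Lefschetz sequences is closed under integer linear combinations. For the first point I would use the geometric-sum identity
\[
\eper_k(n)=\sum_{j=0}^{k-1}\bigl(\eul^{2\pi\imag j/k}\bigr)^{n},
\]
whose right-hand side equals~$k$ when~$k\divides n$ and~$0$ otherwise; this puts~$\eper_k$ into the form~$\sum_i m_i\lambda_i^n$ with each~$m_i=1$ and each~$\lambda_i$ a~$k$th root of unity, so that Proposition~\ref{formLefschetz} applies. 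For the second point, the eigenvalue description in Proposition~\ref{formLefschetz} makes closure under~$\Z$-linear combinations transparent: one simply concatenates the lists of pairs~$(\lambda_i,m_i)$, allowing negative or repeated coefficients~$m_i$. Together these give that~$a_n=\sum_{k\in F}\sum_{j=0}^{k-1}b_k\bigl(\eul^{2\pi\imag j/k}\bigr)^n$ is a Lefschetz sequence, and it is periodic by construction, which is~(a).

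An alternative route, perhaps more in the spirit of this section, would deduce~(b)$\Rightarrow$(a) from Theorem~\ref{Lefschetz} instead: applying Theorem~\ref{theoremEulertransforms} to~$(a_n)=\sum_{k\in F}b_k\eper_k$, whose~$\doldB$-image is the finitely supported integer sequence~$(b_k)$, yields
\[
\exp\Bigl(\sum_{n\ge1}\frac{a_n}{n}z^n\Bigr)=\prod_{k\in F}(1-z^k)^{-b_k},
\]
a rational function; hence its reciprocal~$1-\sum_{n\ge1}c_nz^n$ is rational, so by Lemma~\ref{Salem1} the generating sequence~$(c_n)$ is a linear recurrence sequence, and then Theorem~\ref{Lefschetz} identifies~$(a_n)$ as a Lefschetz sequence.

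I do not expect a genuine obstacle here: the real substance is packaged into the Babenko--Bogaty\u{\i} decomposition of a bounded Dold sequence into finitely many elementary periodic pieces, and the only step needing a moment's care is the bookkeeping that the Lefschetz property survives integer linear combinations, which is evident from Proposition~\ref{formLefschetz}.
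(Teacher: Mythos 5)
Your proof is correct and follows essentially the same route as the paper: both reduce to the Babenko--Bogaty\u{\i} decomposition (Lemma~\ref{lemmaDoldPeriodicIffBounded}) and then verify that each $\eper_k$ is a Lefschetz sequence, the paper by exhibiting the $k\times k$ cyclic permutation matrix $M$ with $\trace M^n=\eper_k(n)$, and you by writing out the eigenvalues of that very matrix (the $k$th roots of unity) and invoking Proposition~\ref{formLefschetz}. The difference is purely notational.
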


\begin{proof}
It is clear that a periodic Lefschetz sequence is a bounded Dold sequence.
By Lemma~\ref{lemmaDoldPeriodicIffBounded}, a bounded Dold sequence is a sum of integer multiples of elementary periodic sequences $\eper_k$. Any such sequence $\eper_k$ can be written as the sequence of traces of powers of matrices of the form
\begin{equation}\label{matrix} M=\begin{pmatrix}
0&0&\dots&0&1\\
1&0&\dots&0&0\\
0&1&\dots&0&0\\
\vdots&0&\ddots&0&\vdots\\
0&\dots&0&1&0
\end{pmatrix}_{k\times k}
\end{equation}
since~$\trace M^n=\eper_k(n)$. Thus any bounded Dold sequence
is a Lefschetz sequence.
\end{proof}

\subsection{Asymptotic Properties}
\label{sectionAsymptoticPropertiesLefschetzNumberIterations}

One may pose several questions concerning
possible growth rates of sequences that lie in more
than one of the various classes. For example, what
are the possible rates of growth of realizable sequences
that are linear recurrence sequences?
The relationship between growth and arithmetic
properties in a topological setting
for Lefschetz sequences is explored
in many places.
We cite below an interesting alternative
due to Babenko and Bogaty\u{\i}
(we also refer to the monograph of Jezierski and Marzantowicz~\cite{MR2189944}
for results in this direction).

\begin{theorem}[Babenko and Bogaty\u{\i},~\protect{\cite{MR1130026}}]
Let~$f$ be a map of a space
{with finitely-generated real homology spaces
(for example,
a map of a compact Euclidean neighbourhood
retract) and write~$\rm{sp_{es}}$ for the essential spectral
radius of the induced map in real homology.}
Then exactly one of the following three
possibilities holds:
\begin{enumerate}[\rm(a)]
\item $\lefschetz({f^n})=0$ for~$m=1,2,\dots$, which
happens if and only if~${\rm{sp_{es}}}(f)=0$.
\item The
sequence~$\(\frac{\lefschetz({f^n})}{{\rm{sp_{es}}}(f)^n}\)$
has the same
set of limit points as a periodic
sequence of the form~$\bigl(\sum_i{\alpha_i\epsilon_i^n}\bigr)$,
where~$\alpha_i\in\mathbb{Z}, \epsilon_i\in\mathbb{C}$,
and~$\epsilon_i^k=1$ for some~$k\in\N$.
\item The set of limit points of the
sequence~$\Bigl(\frac{|\lefschetz({f^n})|}{{\rm{sp_{es}}}(f)^n}\Bigr)$ contains an interval.
\end{enumerate}
\end{theorem}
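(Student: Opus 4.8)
The plan is to expand the Lefschetz numbers in terms of the eigenvalues of the homology action and reduce the trichotomy to a statement about exponential sums with unimodular frequencies. First I would record that, since $f$ acts on each finite-dimensional space $H_k(X;\R)$ by a real operator, the eigenvalues of $\bigoplus_k f_*|_{H_k}$ occur in complex-conjugate pairs with conjugation-invariant multiplicities; grouping equal eigenvalues, one may write $\lefschetz(f^n)=\sum_\lambda\nu_\lambda\lambda^n$ as a finite sum over distinct $\lambda\in\C$, where $\nu_\lambda\in\Z$ is the alternating sum over the degrees of the multiplicities of $\lambda$ and $\nu_{\overline\lambda}=\nu_\lambda$. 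Let $r={\rm{sp_{es}}}(f)$; as one checks, $r$ is the largest modulus of an eigenvalue surviving the alternating-sum cancellation, so $r=\max\{|\lambda|:\lambda\ne0,\ \nu_\lambda\ne0\}$ (with $\max\varnothing=0$), and also $r=\limsup_n|\lefschetz(f^n)|^{1/n}$. By linear independence of the sequences $(\lambda^n)_n$ over distinct nonzero $\lambda$, the sequence $\lefschetz(f^n)$ vanishes identically exactly when no nonzero eigenvalue survives, i.e.\ when $r=0$; this is case~(a). From now on assume $r>0$.

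Next I would isolate the dominant frequencies: put $S=\{\lambda:|\lambda|=r,\ \nu_\lambda\ne0\}$, a nonempty finite set stable under conjugation, and write $\epsilon_\lambda=\lambda/r=\eul^{2\pi\imag\theta_\lambda}$ for $\lambda\in S$. Since every eigenvalue of modulus $<r$ contributes $\bigo((r'/r)^n)$ to $\lefschetz(f^n)/r^n$ for some $r'<r$, we get $\lefschetz(f^n)/r^n=P(n)+\littleo(1)$ with $P(n)=\sum_{\lambda\in S}\nu_\lambda\epsilon_\lambda^n$; because $\nu_{\overline\lambda}=\nu_\lambda$ the sequence $P$ is real-valued, and the $\littleo(1)$ error forces $(\lefschetz(f^n)/r^n)$ and $(P(n))$ to have the same set of limit points, and likewise $|\lefschetz(f^n)|/r^n$ and $|P(n)|$.

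The remaining work splits according to whether all $\epsilon_\lambda$ ($\lambda\in S$) are roots of unity. If they are, then $P$ is periodic, of the form $\bigl(\sum_\lambda\nu_\lambda\epsilon_\lambda^n\bigr)$ with $\nu_\lambda\in\Z$ and $\epsilon_\lambda^k=1$, and its finite value set is the set of limit points of $\lefschetz(f^n)/r^n$: that is case~(b). Otherwise fix $\lambda_0\in S$ with $\epsilon_{\lambda_0}$ not a root of unity, and let $\Lambda\subset\T^{S}$ be the closure of the cyclic subgroup generated by $\theta=(\theta_\lambda)_{\lambda\in S}$; this is a closed subgroup, hence a finite union of cosets of a subtorus $\Lambda_0$, and $\dim\Lambda_0\ge1$ since $\theta_{\lambda_0}$ is irrational. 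With $g\colon\T^{S}\to\C$ defined by $g(x)=\sum_{\lambda\in S}\nu_\lambda\eul^{2\pi\imag x_\lambda}$ and using that the $\N$-orbit $\{n\theta\}$ is dense in $\Lambda$, the set of limit points of $|P(n)|$ equals $|g|(\Lambda)=\bigcup_j|g|(c_j+\Lambda_0)$, a finite union of compact intervals; it contains a non-degenerate interval provided $|g|$ is non-constant on some coset $c_j+\Lambda_0$.

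The crux --- and the step I expect to be the main obstacle --- is to rule out that $|g|$ is constant on every coset of $\Lambda_0$; here the one nonelementary ingredient is used, namely the structure of closed subgroups of the torus together with Pontryagin duality and the conjugation symmetry of $S$. By duality, the annihilator of $\Lambda_0$ in $\Z^{S}$ is $\{k\in\Z^{S}:\sum_\lambda k_\lambda\theta_\lambda\in\Q\}$. Expanding $|g|^{2}=\sum_{\lambda,\mu\in S}\nu_\lambda\nu_\mu\eul^{2\pi\imag(x_\lambda-x_\mu)}$, the Fourier mode $x\mapsto x_\lambda-x_\mu$ (for $\lambda\ne\mu$) occurs with coefficient $\nu_\lambda\nu_\mu\ne0$, so constancy of $|g|^{2}$ on every coset of $\Lambda_0$ would force $\theta_\lambda-\theta_\mu\in\Q$ for all $\lambda,\mu\in S$; in particular $\theta_{\overline{\lambda_0}}-\theta_{\lambda_0}\in\Q$ (recall $\overline{\lambda_0}\in S$), so $\overline{\lambda_0}/\lambda_0$ is a root of unity. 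Raising to a power that annihilates it makes $\lambda_0^{m}$ equal to its own conjugate, hence real, whence $\lambda_0^{m}=\pm r^{m}$ and $\epsilon_{\lambda_0}$ is a root of unity --- contradicting the choice of $\lambda_0$. Therefore $|g|$ is non-constant on some coset and case~(c) holds. Finally the three alternatives are mutually exclusive: (a) is exactly $r=0$; in case~(b) the limit set of $(\lefschetz(f^n)/r^n)$ is finite, whereas in case~(c) it contains an interval and so is infinite. Apart from this last paragraph everything is bookkeeping; it is there that duality and the conjugation symmetry of $S$ do the real work.
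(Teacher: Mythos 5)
The paper states this result with a citation to Babenko and Bogaty\u{\i}~\cite{MR1130026} and does not reproduce a proof, so there is no in-text argument to compare yours against. Read on its own terms, your argument is correct, and it follows what is surely the natural route: expand $\lefschetz({f^n})=\sum_\lambda\nu_\lambda\lambda^n$ with $\nu_{\bar\lambda}=\nu_\lambda\in\Z$, isolate the dominant frequencies of modulus $r={\rm{sp_{es}}}(f)$ to get $\lefschetz({f^n})/r^n=P(n)+\littleo(1)$, and split according to whether all dominant angles are rational (case~(b), $P$ periodic) or some $\theta_{\lambda_0}$ is irrational (case~(c), via the closure $\Lambda$ of the orbit of $\theta$ in $\T^S$). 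The genuinely nontrivial step --- ruling out constancy of $|g|$ on every coset of $\Lambda_0$ --- is handled correctly: the Fourier coefficient of $|g|^2$ at the nonzero character $e_\lambda-e_\mu$ is the nonzero integer $\nu_\lambda\nu_\mu$, so $\Lambda_0$-invariance of $|g|^2$ forces $\theta_\lambda-\theta_\mu\in\Q$ for all $\lambda\neq\mu$ in $S$; since $\lambda_0$ is non-real (otherwise $\epsilon_{\lambda_0}=\pm1$ would be a root of unity), $\bar\lambda_0\in S$ with $\theta_{\bar\lambda_0}=-\theta_{\lambda_0}$, giving $2\theta_{\lambda_0}\in\Q$ and the contradiction.

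Two points you pass over quickly and which deserve a line each in a full write-up. First, the identification ${\rm{sp_{es}}}(f)=\max\{|\lambda|:\lambda\neq0,\ \nu_\lambda\neq0\}=\limsup_n|\lefschetz({f^n})|^{1/n}$ is where the precise meaning of ``essential spectral radius'' in Babenko--Bogaty\u{\i}'s sense enters; you flag it with ``as one checks'' but do not verify it, and it is exactly what makes case~(a) and the dichotomy between~(b) and~(c) match the theorem as stated. Second, the step from density of $\{n\theta : n\ge1\}$ in $\Lambda$ to the statement that the \emph{limit set} of $(|P(n)|)$ is all of $|g|(\Lambda)$ needs that $\Lambda$ is perfect (no isolated points), equivalently that rotation by $\theta$ on $\Lambda$ is minimal; this does hold since $\dim\Lambda_0\ge1$, but a closure point of a sequence need not be a subsequential limit in general, so the observation is worth recording. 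Neither of these is a gap in the logic --- they are correctly used --- but they are the two places where the bookkeeping silently invokes a fact that merits explicit justification.
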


\section[Topological Invariants of Iterated Maps as Dold Sequences]{Topological Invariants of Iterated Maps as Dold Sequences: Topological and Dynamical Consequences}\label{Indseq}

In this section we describe some dynamical and topological
consequences of the Dold congruences. In particular,
we show that they
are valid for many different topological invariants, and show how
the congruences may be transferred
into information
about the dynamical properties of maps, or
about the structure of
periodic points.

\subsection{Fixed Point Indices and
Applications}
\label{sectionDoldCongruencesFixedPointIndicesApplications}

Fixed point indices of iterations
were the original motivation for the
definition of Dold
sequences. We first
recall their definition and then
indicate
some consequences of the Dold congruences.

Consider a Euclidean neighbourhood retract~$Y$
and a continuous
map
\[
f\colon V\longrightarrow Y,
\]
where~$V \subset Y$ is an open subset,
and assume that~$\fixset_f(1)\subset V$ is compact.
Then there is a well-defined
\emph{fixed point index}~$\ind(f)=\ind(f,V)\in\mathbb Z$,\label{pageIndexOfAFixedPoint}
which is a topological invariant.
We refer to the monograph of
Jezierski and Marzantowicz~\cite[Sec.~2.2]{MR2189944}
for the formal definition.

\begin{definition}[Dold~\cite{MR724012}]\label{index-def}
We define the iterations~$f^n\colon V_n \to Y$
for~$V_n$ defined as follows.
We first set~$V_1=V$, and then
inductively define
\[
V_n=f^{-1}(V_{n-1})
\]
for~$n>1$.
Under the assumption that~$\fixset_{f}(n)$ is compact,
the fixed point index~$\ind(f^n)=\ind(f^n, V_n)$ is a
well-defined integer for each~$n\in\N$.
\end{definition}

Theorem~\ref{DoldTH} was
shown by Dold, although it was known
in certain cases earlier. In the terminology we
have adopted (which of course is not that used
by Dold) we have the following result.

\begin{theorem}[Dold~\protect{\cite[Th.~1.1]{MR724012}}]\label{DoldTH}
The sequence~$(\ind(f^n))_n$ is a Dold sequence.
\end{theorem}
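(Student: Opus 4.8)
The plan is to check the combinatorial reformulation of the Dold property supplied by Proposition~\ref{propositionDifferencesBetweenRealizableSequences}, part~\ref{propositionDifferencesBetweenRealizableSequences3}: the integer sequence $(a_n)=(\ind(f^n))_n$ is a Dold sequence if and only if $a_{np^\ell}\equiv a_{np^{\ell-1}}\pmod{p^\ell}$ for every prime $p$ and all $n,\ell\in\N$. Since $a_{np^m}=\ind((f^n)^{p^m})$ and the map $g=f^n$ again satisfies the hypotheses of Definition~\ref{index-def} (the nested domains work out and $\fixset_{g^m}=\fixset_f(nm)$ is compact), this reduces the whole theorem to the following single assertion: for every map $g$ admissible in the sense of Definition~\ref{index-def}, every prime $p$, and every $\ell\ge1$,
\[
\ind(g^{p^\ell})\equiv\ind(g^{p^{\ell-1}})\pmod{p^\ell}.
\]
In other words, it suffices to prove the Dold congruence along the subsequence indexed by the powers of a fixed prime.

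I would prove this last statement by induction on $\ell$, after first reducing to the case in which the ambient Euclidean neighbourhood retract $Y$ is an open subset of some $\R^m$, so that the fixed point index becomes the Brouwer degree $\deg(\mathrm{id}-g^k,\cdot,0)$ of the displacement map (an ENR is a retract of an open subset of Euclidean space, and the index respects this). The geometric idea is to decompose the compact set $K=\fixset_{g^{p^\ell}}$ according to the least $g$-period $p^j$ (with $0\le j\le\ell$) of its points. The stratum with $j<\ell$ is $\fixset_{g^{p^{\ell-1}}}$; the top stratum consists of $g$-orbits of length exactly $p^\ell$, and on it---or on a small $\langle g\rangle$-invariant neighbourhood avoiding the lower fixed-point sets $\fixset_{g^{p^i}}$ for $i<\ell$---the cyclic group $\Z/p^\ell$ acts freely. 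On that invariant neighbourhood the displacement map $\mathrm{id}-g^{p^\ell}$ is $\Z/p^\ell$-equivariant with zero set inside, so multiplicativity of the Brouwer degree under the $p^\ell$-fold covering by the free quotient forces the contribution of the top stratum to be divisible by $p^\ell$; the case $\ell=1$ of this is exactly Dold's lemma that a fixed-point-free map $\rho$ has $p\mid\ind(\rho^p)$ (the prime case of the theorem being also due, in this index form, to Steinlein and to Zabre\u{\i}ko and Krasnosel\cprime ski\u{\i}). For an orbit of length $p^j$ with $j<\ell$, the local index of $g^{p^\ell}$ and of $g^{p^{\ell-1}}$ each equal $p^j$ times the common local index at one point of the orbit (commutativity of the index), and comparing them reduces to the congruence $\ind_x(\psi^{p^{\ell-j}})\equiv\ind_x(\psi^{p^{\ell-j-1}})\pmod{p^{\ell-j}}$ for $\psi=g^{p^j}$ at its fixed point $x$, which follows from the inductive hypothesis when $j\ge1$; the residual case $j=0$ (honest fixed points of $g$) is absorbed into the same induction by treating $g$ near such a fixed point as a new localised instance.

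The main obstacle is that the strata ``points of a given least period'' are in general neither open nor closed in $K$---periodic points of one period may accumulate onto periodic points of another---so one cannot simply excise them, and the clean decomposition above is only morally correct. Making it rigorous is the substance of Dold's argument: one works with nested isolating neighbourhoods of the compact sets $\fixset_{g^{p^i}}$, uses additivity of the index over finite open covers to carry out the bookkeeping, and controls the error terms by the covering-degree computation at each level; alternatively, one may first perturb $g$ near the compact set $\fixset_{g^{p^\ell}}$ to a map all of whose relevant periodic orbits are isolated and then invoke the stability of the index under small perturbations to transfer the conclusion back. Once the prime-power congruence $\ind(g^{p^\ell})\equiv\ind(g^{p^{\ell-1}})\pmod{p^\ell}$ is established for all admissible $g$, Proposition~\ref{propositionDifferencesBetweenRealizableSequences} completes the proof that $(\ind(f^n))_n$ is a Dold sequence.
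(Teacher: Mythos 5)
The paper states this as a cited result of Dold~\cite{MR724012} and does not reprove it; the only internal commentary is the remark that the prime case was established earlier by Steinlein and by Zabre\u{\i}ko and Krasnosel\cprime ski\u{\i}. There is therefore no in-paper proof against which to compare your sketch.

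As an account of Dold's argument your proposal captures the correct ingredients: the reduction to the prime-power congruence $\ind(g^{p^\ell})\equiv\ind(g^{p^{\ell-1}})\pmod{p^\ell}$ via part~\ref{propositionDifferencesBetweenRealizableSequences3} of Proposition~\ref{propositionDifferencesBetweenRealizableSequences} (noting that $g=f^n$ is again admissible), the passage to a Brouwer-degree picture, the stratification of $\fixset_{g^{p^\ell}}$ by least period, the covering/commutativity computation that forces $p^\ell$-divisibility for the contribution of a single isolated orbit of exact length $p^\ell$, and the honest acknowledgement that the strata are in general neither open nor closed. The gap is in the claimed induction on $\ell$: at the stratum $j=0$ of honest fixed points of $g$, you propose to ``treat $g$ near such a fixed point as a new localised instance,'' but that instance requires the very congruence $p^\ell\mid\ind_x(g^{p^\ell})-\ind_x(g^{p^{\ell-1}})$ you set out to prove, with the same $\ell$, merely localised; no quantity decreases, so the induction is not well-founded. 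This is not a cosmetic issue---resolving it, and correspondingly upgrading the single-orbit covering computation to a statement about a possibly complicated compact top stratum on which periods accumulate, is precisely where the real content of Dold's argument lies (via nested isolating neighbourhoods, additivity, and a more careful normalisation of the per-period contributions). As a description of the proof's shape the sketch is fair and correctly identifies the hard points, but as a proof it is circular exactly where most of the work must be done.
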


The recent survey by Steinlein~\cite{MR3392979}
with an emphasis on topology
{and the earlier survey of Nussbaum~\cite{MR284888}
with an emphasis on non-linear functional analysis,}
contain
much of the interesting history of proofs
that~$(\ind(f^n))$ and~$(\lefschetz({f^n}))$ are
Dold sequences,
described in the more general context of
the Leray--Schauder degree.\index{Leray--Schauder degree}

\subsubsection{Existence of Broken Orbits}
\label{sectionExistenceOfBrokenOrbits}

\begin{definition}\label{broken}
Let~$f\colon\mathbb R^m\to\mathbb R^m$ be a continuous map
and let~$\Omega\subsetneq\mathbb R^m$ be
a bounded open set.
A periodic orbit with the property that
at least one of its points lies inside~$\Omega$ and
at least one of its points lies
outside of the closure~$\overline{\Omega}$
of~$\Omega$ will be called an\emph{~$\Omega$-broken
orbit}.\index{broken orbit}
\end{definition}

We define
\[
b_n(f,\Omega)=\frac{1}{n}\sum_{d\smalldivides n}
\mu(n/d)\ind(f^d,\Omega)
\]
for~$n\in\N$.

\begin{theorem}[Krasnosel\cprime ski\u{\i}
{\&} Zabre\u{\i}ko~\cite{MR736839};
Pokrovskii {\&} Rasskazov~\cite{MR2022383}]\label{broken-existence}
Let~$n$ be an integer, let
\[
f\colon\mathbb R^m\to\mathbb R^m
\]
be a continuous map with~$\partial\Omega\cap\fixset_f(n)=\emptyset$,
and assume that~$b_n(f,\Omega)\not\equiv 0$
modulo~$n$. Then there exists an~$\Omega$-broken orbit
whose minimal period is a divisor of~$n$.
\end{theorem}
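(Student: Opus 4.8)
The plan is to argue by contraposition: supposing that $f$ has no $\Omega$-broken orbit whose minimal period divides $n$, I will show that the indices $\ind(f^d,\Omega)$, $d\smalldivides n$, satisfy the Dold congruence $\sum_{d\smalldivides n}\mu(n/d)\ind(f^d,\Omega)\equiv0\pmod n$, which contradicts the hypothesis on $b_n(f,\Omega)$. The first step is to trap the relevant fixed points. Put $K=\fixset_f(n)\cap\overline\Omega$; since $\Omega$ is bounded and $\fixset_f(n)$ is closed, $K$ is compact, and the hypothesis $\partial\Omega\cap\fixset_f(n)=\emptyset$ forces $K=\fixset_f(n)\cap\Omega$. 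The decisive observation is that $K$ is \emph{$f$-invariant} once there are no broken orbits: for $x\in K$ the orbit $\orbitset_f(x)$ lies in $\fixset_f(n)$ and avoids $\partial\Omega$, hence is not $\Omega$-broken, and since it meets $\Omega$ at $x$ it lies entirely in $\Omega$, so in $K$. The same argument, using $\fixset_f(d)\subseteq\fixset_f(n)$ for $d\smalldivides n$, shows that each $\fixset_f(d)\cap\Omega$ is an $f$-invariant compact subset of $K$.

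Next I would realise the localised indices inside a genuine Dold setting. Choose an open set $V$ with $K\subseteq V\subseteq\overline V\subseteq\Omega$ and form Dold's nested domains $V_1=V$, $V_d=f^{-1}(V_{d-1})$ as in Definition~\ref{index-def}, so that $V_d=\{x:f^k(x)\in V\text{ for }0\le k\le d-1\}\subseteq V$. Because $K$ is $f$-invariant and lies in $V$, every orbit through $K$ stays in $V$, so $K\subseteq V_d$ for all $d$; combined with $\overline{V_d}\subseteq\overline\Omega$ and $\fixset_f(d)\cap\partial\Omega=\emptyset$, this shows that for $d\smalldivides n$ the fixed point set of $f^d$ in $\overline{V_d}$ equals $\fixset_f(d)\cap\Omega$, a compact subset of the open set $V_d$. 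Hence $\ind(f^d,V_d)$ is defined, and since every fixed point of $f^d$ lying in $\Omega$ already lies in $V_d$, the excision property of the fixed point index gives $\ind(f^d,\Omega)=\ind(f^d,V_d)$ for all $d\smalldivides n$. Applying Theorem~\ref{DoldTH} to $f|_V\colon V\to\mathbb R^m$ then shows that $(\ind(f^d,V_d))_d$ is a Dold sequence, so $\sum_{d\smalldivides n}\mu(n/d)\ind(f^d,V_d)\equiv0\pmod n$; by the excision identities the left-hand side equals $\sum_{d\smalldivides n}\mu(n/d)\ind(f^d,\Omega)$, and the contrapositive is proved.

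Everything but the last invocation of Dold's theorem is soft once excision is in hand, so that is where I expect the real work to lie. Theorem~\ref{DoldTH} as stated requires the fixed point set of $f^m$ in $V_m$ to be compact for \emph{all} $m$, whereas the hypotheses here control periodic points only through period $n$: $f$ may have periodic points accumulating on $\partial V$ in higher periods, so $\ind(f^m,V_m)$ need not even be defined for $m\nmid n$. The remedy is that the congruence modulo $n$ in Theorem~\ref{DoldTH} is produced in its proof from only the finitely many quantities $\ind(f^d,V_d)$, $d\smalldivides n$ --- the underlying argument decomposes the points of least period $n$ into free $\mathbb Z/n$-orbits and uses commutativity of the index under iteration, never mentioning other periods --- so a ``level-$n$'' form of the Dold congruence holds and applies here verbatim. (A less economical alternative is to truncate $f$ outside a large neighbourhood of $\overline\Omega$ so that all periodic point sets become compact, but then one must check that the truncation introduces no spurious index contributions near $\overline\Omega$.)
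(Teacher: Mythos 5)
Your argument is correct and takes essentially the same route as the paper: assume there is no $\Omega$-broken orbit of minimal period dividing~$n$, observe that every fixed point of $f^d$ in $\Omega$ with $d\smalldivides n$ then has its entire orbit trapped inside $\Omega$, use the excision (localization) property of the index to identify each $\ind(f^d,\Omega)$ with $\ind(f^d,V_d)$ for the iterated Dold domains, and invoke Theorem~\ref{DoldTH} to force $b_n(f,\Omega)\equiv 0\pmod n$. The only cosmetic difference is that you interpose an auxiliary open $V$ with $K\subseteq V\subseteq\overline V\subseteq\Omega$ before building the $V_d$, while the paper simply starts from $V_1=\Omega$; your closing remark that only the level-$n$ form of the Dold congruence is needed (so compactness of $\fixset_f(d)$ is required only for $d\smalldivides n$, not all $m$) is accurate and spells out a technical point the paper leaves implicit in its citation of Theorem~\ref{DoldTH}.
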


\begin{proof}
Fix~$n$ and let~$G_k=\fixset_f(k)\cap\Omega$
for~$k\divides n$.
Assume for the
purposes of a contradiction
that for all~$k$
dividing~$n$ and for all~$x\in G_k$
we have~$f^i(x)\in
\Omega$ for~$1\le i\le k$.
Define~$V_n$ inductively, in the same way as in
Definition~\ref{index-def}, so
\[
V_1=\Omega,\dots,V_n=f^{-1}(V_{n-1}).
\]
Notice that
we then have
\[
V_n=\{x\in V\mid x,f(x),\dots,f^{n-1}(x)\in V\}
\supset G_k.
\]
By the localization property of\index{fixed point index!localization property}
the
fixed point index (see~\cite[Sec.~2.2.1]{MR2189944}),
we have~$\ind(f^n,\Omega)=\ind(f^n,V_n)$
and so~$b_n(f,\Omega)=b_n(f, V_n)$,
but by Theorem~\ref{DoldTH} we know that~$b_n(f,V_k)\equiv0$
modulo~$n$, which is a contradiction
to the assumption.
\end{proof}

We have stated Theorem~\ref{broken-existence}
for maps on~$\R^m$ because that is the context
considered by Pokrovskii {\&} Rasskazov~\cite{MR2022383},
who use this assumption to draw additional conclusions.
A similar proof should give the
result in the setting of a Euclidean
neighbourhood retract and compact
set of fixed points.

\subsubsection{Planar Homeomorphisms}
\label{sectionPlanarHomeomorphisms}

The Dold relations are useful in finding
restrictions on the form
of indices of iterations.

Let us recall that by a local fixed point index at an isolated fixed
point~$q$, written~$\ind(f,q)$, we understand
the index~$\ind(f,V)$ for a neighborhood~$V$ of~$q$
that is small enough to have~$\fix_f\cap V=\{q\}$.
The following theorem in this direction
was proved by Brown in~$1990$.

\begin{theorem}[Brown~\protect{\cite[Th.~4]{MR994772}}]\label{Brown}
Let~$f\colon\R^2\to\R^2$ be
a planar orientation preserving homeomorphism
{with an isolated fixed point at~$0$ for each iteration}.
Then
there is an integer~$p\neq1$ such that
\begin{equation}\label{Brown-homeo}
\ind(f^n,0)=
\begin{cases}
\ind(f,0)&\mbox{if }\ind(f,0)\neq 1;\\
1\mbox{ or }p&\mbox{if }\ind(f,0)=1
\end{cases}
\end{equation}
for all~$n\in\N$.
\end{theorem}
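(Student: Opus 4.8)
The plan is to reduce the statement to the local behaviour of $f$ near $0$ and to use the Dold congruences only to sharpen, not to derive, the conclusion. Write $a_{n}=\ind(f^{n},0)$. By Theorem~\ref{DoldTH} the sequence $(a_{n})$ is a Dold sequence, so it satisfies~\eqref{equationDoldSequenceMobiusCondition}; equivalently, by Lemma~\ref{lemmaConstructingDoldSequences}, the transform $(b_{n})=\doldB((a_{n}))$ is integer-valued. What remains is to determine which Dold sequences actually occur as $(\ind(f^{n},0))$ for a planar orientation-preserving homeomorphism with $0$ isolated in $\fixset_{f}(n)$ for every $n$, and the claim of the theorem is that these are exactly the sequences that are constant or two-valued of the shape~\eqref{Brown-homeo}.

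The topological core, and the main obstacle, is to produce a single integer $q\ge1$---a ``local period'' of $f$ at $0$---with the two properties \textbf{(i)} $a_{n}=1$ whenever $q\nmid n$, and \textbf{(ii)} $a_{nq}=a_{q}$ for every $n\ge1$. One first shows, using that orientation is preserved and that $0$ is isolated in $\fixset_{f}(n)$ for all $n$, that $f$ has a well-defined local rotation number around $0$ (a Poincar\'e rotation number on the circle of prime ends at $0$); if this number is irrational, set $q=1$, and if it is rational, say $p'/q'$ in lowest terms, set $q=q'$. Property~(i) then reflects that for $q\nmid n$ the iterate $f^{n}$ has nonzero local rotation number and hence no sector structure at $0$, forcing index $1$; property~(ii) reflects that $f^{q}$ is rotation-free at $0$, so that a decomposition of a punctured neighbourhood of $0$ into finitely many hyperbolic and elliptic sectors (in the Poincar\'e--Bendixson sense) is stable under further iteration and the index does not change. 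Making~(i) and~(ii) precise---in particular, establishing the rotation number and ruling out the appearance of new periodic points or new sectors in passing from $f^{q}$ to $f^{nq}$---is where the real work lies; it rests on the structure theory of isolated fixed points of surface homeomorphisms (Brouwer-type arguments for planar homeomorphisms) rather than on the Dold congruences, and for it we refer to Brown~\cite{MR994772}.

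Granting~(i) and~(ii), the theorem follows at once. If $q=1$, then~(ii) gives $a_{n}=a_{1}=\ind(f,0)$ for all $n$, which is the first case of~\eqref{Brown-homeo} when $\ind(f,0)\neq1$ and a degenerate instance of the second when $\ind(f,0)=1$. If $q>1$, then $q\nmid1$, so~(i) gives $a_{1}=1$ and we are in the case $\ind(f,0)=1$; writing $p=a_{q}$, properties~(i) and~(ii) say precisely that $a_{n}\in\{1,p\}$ for all $n$, and we may take $p\neq1$, for if $a_{q}=1$ then $a_{n}=1$ for all $n$ and the value $q=1$ already applies. This proves~\eqref{Brown-homeo}. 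As a coda, the Dold property imposes one constraint invisible to the local analysis alone: for the two-valued sequence just obtained one computes that $(b_{n})=\doldB((a_{n}))$ is supported on $\{1,q\}$ with $b_{1}=1$ and $b_{q}=(p-1)/q$, so integrality of $(b_{n})$---that is, Theorem~\ref{DoldTH}---forces $q\divides p-1$; thus the congruences refine the conclusion drawn from the local dynamics rather than obstruct it.
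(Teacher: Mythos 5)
The paper does not prove this theorem: it is quoted directly from Brown~\cite{MR994772} and followed only by remarks on how the Dold congruences narrow the conjectured values of~$p$. Your proposal is in essentially the same position: the two properties~(i) and~(ii)---that $\ind(f^n,0)=1$ whenever $q\nmid n$, and that $\ind(f^{nq},0)=\ind(f^q,0)$ for all~$n$---carry the whole topological content, and you explicitly defer both to Brown's original argument. So what you have is a high-level outline plus a citation, not a proof; this matches the survey's treatment, but it should not be mistaken for a self-contained argument.

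Two comments on the outline itself. Framing the construction of~$q$ via a rotation number on a circle of prime ends is a plausible modern organization, but it is not what Brown does: his proof works directly with the structure of isolated fixed points of planar homeomorphisms through Brouwer-type arguments and a sector decomposition of a punctured neighbourhood, and the step you flag as ``ruling out the appearance of new periodic points or new sectors'' in passing from $f^q$ to $f^{nq}$ is exactly where that work lives. The isolatedness hypothesis removes the worry about new periodic points accumulating at~$0$, but the persistence of the sector structure under iteration is the genuine content and cannot simply be asserted. Also, the claim that the theorem characterises ``exactly the sequences that are constant or two-valued of the shape~\eqref{Brown-homeo}'' overstates it: Brown's theorem is a necessary condition on $(\ind(f^n,0))$, and which values of~$p$ actually occur is a separate realizability question, discussed (and still partly open) in the remark following the statement in the paper. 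Your coda, on the other hand, is correct and worth keeping: once the two-valued form is granted, the Dold congruence forces $q\divides p-1$, which agrees with the Ruiz del Portal--Salazar refinement $\ind(f^n,0)=\eper_1(n)+a_d\cdot\eper_d(n)$ quoted later in the same subsection (there $p=1+d\,a_d$, so $d\divides p-1$ automatically).
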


\begin{remark}
Brown conjectured that if~$\ind(f,0)=1$, then every integer~$p$
can appear as an index of some iteration in
the formula~\eqref{Brown-homeo} (cf.~\cite[Remark after Theorem~$4$]{MR994772}),
and gave examples of realizations for all
values of~$p$ except for~$p=0$ and~$p=2$.
The Dold congruences easily
exclude these two cases,
by showing that~$p=0$ and~$p=2$ cannot occur as
indices of any iteration if~$\ind(f,0)=1$ (cf. \cite{MR2037272}).
To see how this works, assume that~$p=0$ and
let~$n$ be the first iteration for
which~$\ind(f^n,0)=0$.
Then
\[
\sum_{k\smalldivides n}\mu(n/k)\ind(f^k,0)
=
\sum_{k\smalldivides n, k\neq n}\mu(n/k)
=
\sum_{k\smalldivides n}\mu(n/k)-1
=-1
\not
\equiv 0
\]
modulo~$n$,
where in the last equality we used well-known
identities for the M{\"o}bius function
contained in Lemma~\ref{CONVID}.
\end{remark}

Notice that
by the formula~\eqref{Brown-homeo}
and Lemma~\ref{lemmaDoldPeriodicIffBounded}
the sequence~$(\ind(f^n,0))$ must be periodic.

Let us recall that an isolated fixed point~$p$ is
non-accumulated if~${\rm Per}(f) \cap V =\{p\}$ for some neighborhood~$V$ of~$p$.
By a use of
subtle topological analysis,
Ruiz del Portal and Salazar showed
later in~\cite{MR2645114} that
if~$0$ is not an accumulated fixed point, then
\[
\ind(f^n,0)
=
\eper_1(n)+a_d\cdot\eper_d(n),
\]
where~$d\geq 1$ and~$a_d$ is an integer.

\subsubsection{Periodic Sequences of Indices of Iterations}
\label{sectionPeriodic Sequences of Indices of Iterations}

Consider a compact
Euclidean neighbourhood retract~$X$ and
continuous map~$f\colon X\to X$ such
that the two following conditions are satisfied:
\begin{enumerate}[\rm(a)]
\item the set~$\fixset_f(n)$ is compact for each~$n\ge1$ and
consists of isolated fixed points of~$f^n$;
\item for each~$x\in \Per(f)$, the set of periodic points of~$f$, the sequence~$(\ind(f^n, x))_n$ is
bounded.
\end{enumerate}

Notice that the condition~(a) is equivalent to the fact that
the number of~$n$-periodic point is finite
for each~$n\ge1$, while~(b)
means that~$(\ind(f^n, x))_n$ is periodic (by Lemma~\ref{lemmaDoldPeriodicIffBounded}).

In this class of maps, the fact that the Lefschetz numbers of
iterations of~$f$
are unbounded implies the existence of infinitely many periodic
points, by the following result.

\begin{theorem}\label{Lef-unbonded}
Let~$f\colon X \to X$ satisfy the  conditions~{\rm(a)} and~{\rm(b)}
above. If~$(\lefschetz({f^n}))$ is unbounded,
then~$f$ has infinitely many periodic
points with distinct periods.
\end{theorem}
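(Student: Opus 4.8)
The plan is to argue by contradiction. Suppose that among the periodic points of~$f$ only finitely many distinct least periods occur. By condition~(a) each set $\fixset_f(n)$ of points fixed by~$f^n$ is finite; hence $\Per(f)$---which is contained in the union of the finitely many sets $\fixset_f(p)$ taken over those periods~$p$ that actually occur---is itself a finite set, say of cardinality~$R$. The goal is then to show that $(\lefschetz(f^n))$ is bounded, contradicting the hypothesis that it is unbounded.

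The main tool is the Lefschetz--Hopf fixed point formula---the normalization and additivity properties of the fixed point index on a compact Euclidean neighbourhood retract (see Jezierski and Marzantowicz~\cite[Sec.~2.2]{MR2189944}). Since by~(a) the set $\fixset_f(n)$ is finite and consists of isolated fixed points of~$f^n$, this formula gives
\[
\lefschetz(f^n)=\sum_{x\in\fixset_f(n)}\ind(f^n,x)
\]
for every~$n\in\N$. Because $\fixset_f(n)\subseteq\Per(f)$, the right-hand side is a sum of at most~$R$ terms, regardless of~$n$.

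To bound those terms I would invoke condition~(b): for each $x\in\Per(f)$ the sequence $(\ind(f^n,x))_n$---defined for those~$n$ with $x\in\fixset_f(n)$, and taken to be~$0$ otherwise---is bounded, in fact periodic by Lemma~\ref{lemmaDoldPeriodicIffBounded}. Since $\Per(f)$ is finite, there is a single constant~$M$ with $|\ind(f^n,x)|\le M$ for all $x\in\Per(f)$ and all~$n\in\N$. Combining this with the displayed identity yields $|\lefschetz(f^n)|\le RM$ for every~$n$, so $(\lefschetz(f^n))$ is bounded, which is the required contradiction.

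The only point that calls for a little care is the reduction to a finite set: one must check that condition~(a) really does force $\Per(f)$ to be finite as soon as only finitely many least periods occur (immediate, since each $\fixset_f(n)$ is finite), and that the Lefschetz--Hopf formula applies in the present generality---a compact ENR together with a map each of whose iterates has only isolated fixed points---which is standard. Everything else is bookkeeping; the substantive input is the conjunction of~(a) and~(b), and no control over the individual indices beyond the boundedness supplied by~(b) is needed.
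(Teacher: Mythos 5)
Your proof is correct and takes essentially the same approach as the paper's: both rest on the Lefschetz--Hopf formula, condition~(a) to keep each $\fixset_f(n)$ finite, and condition~(b) to bound the individual index sequences. Your version is somewhat more carefully spelled out---the paper's proof states the conclusion rather tersely without explicitly framing the contradiction or producing the uniform bound $RM$---but the substance is identical.
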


\begin{proof}
By the Lefschetz--Hopf formula, we have
\begin{equation}\label{LH}
\lefschetz({f^n})
=
\sum_{x\in\fixset(f^n)}\ind(f^n,x)
\end{equation}
for each~$k\ge1$.

As a consequence of the formula~\eqref{LH},
the sequence~$(\lefschetz({f^n}))$ is a
sum of~$(\ind(f^n,x))$
over all~$x\in\bigcup_{n\ge1}\fixset_f(n)$.
By assumption~$(\lefschetz({f^n}))$ is unbounded,
while by~(b) each~$(\ind(f^n,x))_n$ is bounded and by~(a)
we know~$\fixset_f(n)$ is finite for each~$n\ge1$.
Thus~$f$ must have infinitely
many periodic points of distinct minimal periods.
\end{proof}

\begin{remark}\label{klasy}
One of the
directions of recent research is to identify the
classes of maps for which~$(\ind(f^n,x))$ is bounded.
Among such maps there are:
\begin{itemize}
\item planar maps and homeomorphisms
of~$\R^3$ at a fixed
point which is an isolated invariant set
(see
Hern\'{a}ndez-Corbato
and Ruiz del Portal~\cite{MR3343550},
Le Calvez, Ruiz del Portal, and Salazar~\cite{MR2739062},
and Le Calvez and Yoccoz~\cite{MR1477759});
\item $C^1$ maps in work of
Chow, Mallet-Paret and Yorke~\cite{MR730267};
\item holomorphic maps in work
of Bogaty\u{\i}~\cite{MR1095305},
Fagella and Llibre~\cite{MR1707699},
and Zhang~\cite{MR2465609};
\item simplicial maps of smooth type
in work of Graff~\cite{MR1874085}.
\end{itemize}
For such maps the structure of the indices of
iterations allows one to detect
information about periodic
points and dynamical behaviour in the neighborhood
of periodic points, and in certain cases
some features of the global dynamics as well.
The determination of the exact form of possible
indices of iterations  for smooth maps (see~\cite{MR2813888})
turned out to have many topological consequences.
In particular, based on that result
Graff and Jezierski constructed a smooth branch of
Nielsen periodic point theory, obtaining
invariants that allow the
minimal number of periodic points in a
smooth homotopy class to be computed~\cite{MR2754353, MR3071942, MR3622692}.
\end{remark}

\subsubsection{Detecting Periodic Points using Dold Congruences}
\label{sectionDetecting Periodic Points using Dold Congruences}

{This is again a large area of research, which we
illustrate with a sample of the type of result that may
be expected.}

\begin{proposition}[Dugundji and Granas~\cite{MR660439}]\label{4.1}
Let~$W$ be a connected
polyhedron\index{polyhedron}\index{homotopic to a constant}
and let~$f\colon W\rightarrow{W}$
be a continuous map with the property that~$f^n$ is
homotopic to a constant
map for some~$n\ge1$. Then~$f$ has a fixed point.
\end{proposition}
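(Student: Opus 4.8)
The plan is to prove this via the Lefschetz fixed point theorem, by showing that the hypothesis forces~$\lefschetz(f)=1$. Since a connected polyhedron is in particular a compact CW-complex, its rational homology groups are finitely generated and vanish in all sufficiently high degrees, so~$\lefschetz(f)$ is well defined and the Lefschetz fixed point theorem applies; it therefore suffices to compute~$\lefschetz(f)$.

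The key observation I would use is that~$f$ acts nilpotently on the positive-degree rational homology of~$W$. Homotopic maps induce equal maps in homology, and a constant map~$c\colon W\to W$ factors through a point, so~$c$ induces the zero map on~$H_k(W;\Q)$ for~$k\ge1$ and, since~$W$ is connected, the identity on~$H_0(W;\Q)\cong\Q$. Writing~$f_{*,k}$ for the endomorphism of~$H_k(W;\Q)$ induced by~$f$, this yields~$f_{*,0}=\mathrm{id}$ and~$(f_{*,k})^n=(f^n)_{*,k}=c_{*,k}=0$ for all~$k\ge1$, so each~$f_{*,k}$ with~$k\ge1$ is a nilpotent endomorphism of a finite-dimensional vector space; in particular all of its eigenvalues vanish, and hence~$\trace\bigl((f_{*,k})^m\bigr)=0$ for every~$m\ge1$. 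Therefore
\[
\lefschetz(f^m)=\sum_{k\ge0}(-1)^k\trace\bigl((f_{*,k})^m\bigr)=\trace(\mathrm{id}_{H_0})=1
\]
for all~$m\ge1$, and in particular~$\lefschetz(f)=1\neq0$. The Lefschetz fixed point theorem then produces a fixed point of~$f$.

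I would close with the Dold-theoretic comment that the sequence~$(\lefschetz(f^m))_m$ is a Lefschetz sequence, hence a Dold sequence by Corollary~\ref{croollaryWhereYouTheLastPersionToKnow}, and that the computation above identifies it as the constant sequence~$1=\eper_1$---the simplest nonzero realizable (hence Dold) sequence---reflecting the heuristic that~$f$ behaves, at the level of Lefschetz numbers, like a map with a single fixed point and no other periodic orbits. When~$n=p$ is prime one can even bypass the full nilpotence statement: from~$\lefschetz(f^p)=1$ alone, the Dold congruence~\eqref{equationDoldSequenceMobiusCondition} applied to~$(\lefschetz(f^m))$ at~$n=p$ gives~$\lefschetz(f)\equiv\lefschetz(f^p)=1\pmod p$, so~$\lefschetz(f)\neq0$. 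For composite~$n$, however, the prime-power congruences only relate~$\lefschetz(f^{np^{\ell}})$ to~$\lefschetz(f^{np^{\ell-1}})$ and do not pin down~$\lefschetz(f)$, so the homological nilpotence is genuinely the essential ingredient. Beyond remembering the~$H_0$-contribution and noting that ``polyhedron'' supplies the finiteness that makes~$\lefschetz(f)$ meaningful, there is no real obstacle in this argument.
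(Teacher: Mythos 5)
Your main argument is correct and in fact more direct than the paper's: the observation that each $f_{*,k}$ for $k\ge1$ is a nilpotent endomorphism of $H_k(W;\Q)$ forces $\trace\bigl((f_{*,k})^m\bigr)=0$ for all $m\ge1$, which immediately gives $\lefschetz(f^m)=1$ for every $m$, so $\lefschetz(f)\neq0$ and the Lefschetz fixed point theorem finishes. The paper, wanting to showcase the Dold congruence (the proposition sits in a section titled ``Detecting Periodic Points using Dold Congruences''), instead first notes that $f^n\sim c$ implies $f^{n+1}\sim c$, hence $f^p\sim c$ for any prime $p\ge n$; it computes $\lefschetz(f^p)=1$ and then invokes the Dold congruence $\lefschetz(f)\equiv\lefschetz(f^p)\pmod p$ to conclude $\lefschetz(f)\neq0$. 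Your version gives the sharper statement $\lefschetz(f)=1$ exactly and bypasses the Dold machinery; the paper's version is a little weaker but makes the point of the section.

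Your closing remark, however, contains a misstep. You assert that for composite $n$ the Dold-congruence route ``does not pin down $\lefschetz(f)$'' and that homological nilpotence is therefore ``genuinely the essential ingredient.'' That overlooks the homotopy observation the paper leans on: if $f^n$ is homotopic to a constant map $c$, then $f^{n+1}=f\circ f^n$ is homotopic to $f\circ c$, which is again a constant map, so $f^m\sim c$ for every $m\ge n$. In particular $f^p\sim c$ for any prime $p\ge n$ regardless of whether $n$ is prime, whence $\lefschetz(f^p)=1$ and the prime-case Dold congruence $\lefschetz(f)\equiv1\pmod p$ applies exactly as in your prime-$n$ discussion. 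So the Dold route works equally well for composite $n$; nilpotence is not essential for the nonvanishing of $\lefschetz(f)$, only for determining its exact value.
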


\begin{proof}
{Writing~$\sim$ for homotopy equivalence,
observe first that
\begin{equation*}\label{homotop}
f^n\sim c\Longrightarrow f^{n+1}\sim c,
\end{equation*}
where~$c$ is a constant map.}
In particular,~$f^p\sim c$ for any prime number~$p\ge n$.
Hence
\[
f_{*i}^p\colon H_i(W,\Q)\longrightarrow{H_i(W,\Q)}
\]
is the zero homomorphism for all~$i>0$,
so~$\lefschetz({f^p})=1$.
On the other hand,
\[
\lefschetz(f)\equiv\lefschetz({f^p})
\]
modulo~$p$ by the Dold congruences, which
implies that~$\lefschetz(f)\neq 0$.
This proves there must be a fixed point by
the Lefschetz fixed point theorem.
\end{proof}


\subsection{Nielsen and Reidemeister Numbers}
\label{sectionDold Congruences for Nielsen and Reidemeister Numbers}

Let~$K$ be a connected, compact polyhedron
with a continuous map~$f\colon K\to K$.
Let~$p\colon\widetilde{K}\to K$ be the universal
cover of~$K$,
and let~$\widetilde{f}\colon\widetilde{K}\to\widetilde{K}$ be a
lifting of~$f$, so~$p{\circ}\widetilde{f}=f{\circ}p$.
Liftings~$\widetilde{f}$ and~$\widetilde{f'}$
are said to be \emph{conjugate}\index{lifting}\index{lifting!conjugate}
if there is a~$\gamma\in\pi_1(K)$ with
\begin{equation}\label{conjugacy}
\widetilde{f'}
=
\gamma{\circ}\widetilde{f}{\circ}\gamma^{-1}.
\end{equation}
We call the subset~$p(\fixset(\tilde{f}))\subset\fixset(f)$ the
\emph{fixed point class}\index{fixed point class}
of~$f$ determined by the lifting class~$[\tilde{f}]$.
A fixed point class is called
\emph{essential}\index{fixed point class!essential}
if its fixed point index
is non-zero.
In this setting we can introduce the
\emph{Reidemeister number}~$R(f)$\index{Reidemeister number}\label{pageReidemeisterNumber}
of~$f$
and the \emph{Nielsen number}~$N(f)$\index{Nielsen number}\label{PageNielsenNumber}
of~$f$.
{In our setting,~$R(f)$ is the number of lifting classes
of~$f$, and~$N(f)$ is the number of essential fixed point classes.
Notice that~$R(f)$ is equal to the number of fixed point classes,
and  is a positive integer or infinity. Both~$R(f)$
and~$N(f)$ are topological invariants.}

{The importance of fixed point theory
in topology goes back to Poincar{\'e} and the
origins of topology itself.
Lefschetz~(see~\cite{MR1501331} and its
references to his earlier works in the area)
found a way to count fixed
points (with a multiplicity
given by the fixed point index) of continuous maps
on compact topological spaces in terms of traces of induced
maps on the homology groups of the space.
Nielsen~\cite{zbMATH02597378} studied the minimal
number of fixed points in an isotopy class
of homeomorphisms of the torus,
a result extended by Brouwer to continuous
maps of the torus, and went on to
publish an influential study~\cite{MR1555256} on homeomorphisms
of hyperbolic surfaces in which fixed points
are classified in terms of behaviour on the
universal covering space.
In the case of a compact manifold of dimension at least~$3$ (or a
polyhedron satisfying some additional natural
hypotheses), this lower bound is the
best one possible in
that~$N(f)=\min\{\fix_g(1)\mid g{\sim}f\}$
by work of Jiang~\cite{MR685755}.
We refer to that monograph for an extensive
treatment of Nielsen fixed point theory,
and to a survey by Jiang and Zhao~\cite{MR3735835}
and a historical survey
by Brown~\cite{MR1674916} for thorough
treatments.}

In general the sequence~$(N(f^n))$ is not a Dold sequence. However, this does happen for all maps  on a given space if certain topological restrictions on the space are imposed.
{We mention here that there is a minor error
in~\cite[Ex.~11.3]{MR3303933},
where it is claimed that for an orientation-reversing
homeomorphism of~$\S^1$ the Dold congruence
fails (the term corresponding to the
even factor~$10=2\cdot5$ was omitted from a sum over the divisors
of~$90$).
In fact for a continuous map~$f\colon\S^1\to\S^1$ of the
circle, it is known
that
\[
N(f^n)=|\lefschetz({f^n})|=|1-d^n|,
\]
where~$d$ is the degree of the map.} Clearly~$a_n=d^n$ for all~$n\ge1$
defines a Dold sequence~$(a_n)$,
as it is
the sequence of traces of the~$1\times 1$ matrices~$[d]^n$.
Consider now the sequence
with~$n$th term
given by~$N(f^n)=|1-d^n|$.
If~$d\geq 1$, then~$|1-d^n|= d^n-1$ is a Dold sequence.
If~$d<0$ then~$d=-b$, where~$b>0$, and
\[
|1-d^n|=|1-(-b)^n|=b^n-(-1)^n=b^n +\eper_1(n)-\eper_2(n)
\]
is a Dold sequence
as it is a sum of Dold sequences.

On the other hand,
let us observe that~$(|L(f^n)|)$ is not a
Dold sequence in general.
For example,~$(a_n)=(-\eper_2+\eper_3)$ is
a Lefschetz sequence by Lemma~\ref{perDold-Lef},
but~$(|a_n|)=(0,2,3,2,0,1,\ldots)$ does not satisfy
the Dold congruence modulo~$6$.

The simplest examples of maps
whose Nielsen numbers do
not satisfy the Dold congruences
are found in the class of maps of simply-connected spaces.

\begin{example}\label{ex-simply}
Let~$f$ be a map of a simply-connected compact space~$X$.
Then
\[
N(f)
=
\begin{cases}
0&\mbox{if }L(f)=0;\\
1&\mbox{if }L(f)\neq0.
\end{cases}
\]
Thus, to find an example of map~$f$
for which~$(N(f^n))_n$ is not
a Dold sequence, it is enough to find a map~$f$ such that
\begin{equation}\label{nonDold}
\left.
\begin{aligned}
\lefschetz(f)&=0\\
\lefschetz({f^2})&\neq 0
\end{aligned}
\right\}
\end{equation}
and for this we may take a
homeomorphism~$f$ of the~$2$-sphere~$S^2$ that changes the orientation.
Then~$\lefschetz({f^n})=1+(-1)^n$,
and the Lefschetz numbers satisfy~\eqref{nonDold}.
\end{example}

On the other hand, for many other classes of spaces
{beyond the circle}
the sequence~$(N(f^n))$ is a
Dold sequence for all continuous maps.

\begin{proposition}\label{ex-Klein}
If~$f$ is a map of a Klein bottle~$K$,
then~$(N(f^n))$ is a Dold sequence.\index{Klein bottle}
\end{proposition}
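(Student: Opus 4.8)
The plan is to compute $N(f^n)$ explicitly by passing to the orientation double cover of $K$ by the $2$-torus and applying the averaging formula for Nielsen numbers. Write $K=T/\langle\theta\rangle$, where $T$ is the $2$-torus and $\theta$ is the fixed-point-free involution corresponding to the index-$2$ subgroup $\pi_1(T)=\langle a,b^2\rangle$ of $\pi_1(K)=\langle a,b\mid bab^{-1}=a^{-1}\rangle$ (note $b^2$ is central, so $\pi_1(T)\cong\Z^2$). Every endomorphism of $\pi_1(K)$ is, up to conjugation, of the shape $a\mapsto a^p$, $b\mapsto a^qb^r$ with $p,q,r\in\Z$ (and $p=0$ or $r$ odd, so that the relation holds), and $N(g)$ depends only on the conjugacy class of $g_\#$; hence I may assume $f$ is of this form. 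Since $f_\#$ preserves $\pi_1(T)$, the map $f$ lifts to some $\tilde f\colon T\to T$, and in the basis $([a],[b^2])$ of $H_1(T;\Z)=\Z^2$ its induced map is the upper triangular matrix $\tilde A=\left(\begin{smallmatrix}p&q(1+(-1)^r)\\0&r\end{smallmatrix}\right)$, while the deck transformation acts by $\Theta:=\theta_*=\operatorname{diag}(-1,1)$. The two lifts of $f^n$ are $\tilde f^{\,n}$ and $\theta\circ\tilde f^{\,n}$, with homology actions $\tilde A^n$ and $\Theta\tilde A^n$, and $\tilde A^n$ is again upper triangular, with diagonal entries $p^n,r^n$.

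I would then invoke the averaging (Anosov-type) formula for Nielsen numbers of self-maps of the Klein bottle (see the monograph of Jezierski and Marzantowicz~\cite{MR2189944}): $N(f^n)=\tfrac12\bigl(|L(\tilde f^{\,n})|+|L(\theta\circ\tilde f^{\,n})|\bigr)$. Computing the Lefschetz numbers as determinants of triangular matrices---which in particular do not see the off-diagonal ``twist'' coming from $q$---gives $L(\tilde f^{\,n})=\det(I-\tilde A^n)=(1-p^n)(1-r^n)$ and $L(\theta\circ\tilde f^{\,n})=\det(I-\Theta\tilde A^n)=(1+p^n)(1-r^n)$. Using the identity $|1-x|+|1+x|=2\max(1,|x|)$ for real $x$, this yields
\[
N(f^n)=\tfrac12\,|1-r^n|\bigl(|1-p^n|+|1+p^n|\bigr)=|1-r^n|\cdot\max(1,|p|)^{\,n}
\]
for every $n\ge1$.

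It remains to see that this sequence is a Dold sequence. Put $c=\max(1,|p|)\in\N$; then $(c^n)$ is a Dold sequence, being the sequence of traces of powers of the $1\times1$ integer matrix $[c]$. The sequence $(|1-r^n|)$ is a Dold sequence by exactly the case analysis already carried out in the circle case above (where $N(g^m)=|1-d^m|$): for $r\ge1$ it equals $(r^n-1)$, a difference of sequences of the form $(\trace B^n)$; for $r\le0$, writing $r=-b$ with $b\ge0$, it equals $(b^n-(-1)^n)=(b^n+\eper_1(n)-\eper_2(n))$ if $b\ge1$ and the constant sequence $1$ if $b=0$, in each case a sum and difference of sequences $(\trace B^n)$ and elementary periodic sequences, hence a Dold sequence by Corollary~\ref{croollaryWhereYouTheLastPersionToKnow}. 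Since the class of Dold sequences is closed under products (as noted after Proposition~\ref{propositionDifferencesBetweenRealizableSequences}), the product $(|1-r^n|\cdot c^n)=(N(f^n))$ is a Dold sequence.

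The main obstacle I expect is stating and applying the averaging formula in precisely the correct form and checking that it is valid for \emph{all} continuous self-maps of $K$ (not only homeomorphisms)---in particular through the degenerate parameter values ($r$ even, which forces $p=0$ and so $N(f^n)=|1-r^n|$; the values $r\in\{0,1\}$; and $|p|\le1$), where the generic picture collapses---together with confirming that the two lifts of $f^n$ are always $\{\tilde f^{\,n},\theta\circ\tilde f^{\,n}\}$ with the stated homology actions and that the twist parameter $q$ genuinely drops out of the determinants. Once those structural inputs are secured, the Dold property is immediate from the circle computation together with closure under products. It is worth stressing that one cannot shortcut the argument by taking the absolute value of a Dold sequence---the example $(-\eper_2+\eper_3)$ above shows this fails in general---so it is really the triangular structure of $\tilde A$, with diagonal $(p,r)$, that makes the absolute values harmless here.
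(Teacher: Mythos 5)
Your proof is correct, and it arrives at exactly the paper's closed form $N(f^n)=|u^n(v^n-1)|$ for $|u|>1$ and $N(f^n)=|v^n-1|$ for $|u|\le 1$ (with $u=p$, $v=r$), followed by the identical final step: reduce to the circle computation for $\bigl(|1-v^n|\bigr)$ and invoke closure of Dold sequences under products. Where you differ from the paper is in how the formula is obtained: the paper treats it as a black box, citing the explicit case-by-case computations of Kim--Kim--Zhao and Llibre, while you rederive it by lifting to the torus double cover, computing the two Lefschetz numbers via the triangular homology matrix $\tilde{A}$, and applying the averaging formula $N(f^n)=\tfrac12\bigl(|L(\tilde f^{\,n})|+|L(\theta\circ\tilde f^{\,n})|\bigr)$. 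The gain is transparency --- the derivation makes visible why the twist parameter $q$ drops out and why the absolute values split multiplicatively, which a cited table cannot. The cost, which you correctly flag, is that you trade one citation for another: the averaging formula for Nielsen numbers is a nontrivial theorem (it is not automatic on general infra-nilmanifolds), and for the Klein bottle it has to be justified either from the Anosov-relation literature or extracted from the very references the paper cites; as stated, this step carries the full weight of the proof. Two small improvements: you do not need the qualifier ``up to conjugation'' in your normal form, since the relation $bab^{-1}=a^{-1}$ already forces $f_\#(a)=a^p$ on the nose (conjugation preserves the exponent of $b$, so $f_\#(a)$ and its inverse must have $b$-exponent zero), with $f_\#(b)=a^qb^r$ and $p\bigl((-1)^r+1\bigr)=0$ then the unrestricted general shape; and the degenerate parameter ranges you worry about are in fact handled uniformly by the identity $|1-x|+|1+x|=2\max(1,|x|)$, with $r$ even forcing $p=0$ and hence $c=1$, so no separate case analysis is needed.
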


\begin{proof}
The Nielsen numbers of iterations in the case of Klein bottle may be
expressed in terms of generators of the fundamental
group (see the works of
Kim, Kim, and Zhao~\cite{MR2410253}
or Llibre~\cite{MR1197046} for the details). A
consequence of these
calculations is that there exist integers~$u$ and~$v$ such that
\[
N(f^n)
=
\begin{cases}
|u^n\cdot(v^n-1)|&\mbox{if }|u|>1;\\
|v^n-1|&\mbox{if }|u|\le1.
\end{cases}
\]
As we mentioned above,
the sequences~$(a_n)$ and~$(b_n)$,
defined by
the relations~$a_n=|u|^n$ and~$b_n=|1-v^n|$
for all~$n\ge1$ are Dold sequences. This shows
that~$(N(f^n)$ is a Dold sequence since
the property is closed under products (as discussed in
the remark at the end of
Section~\ref{sectionRelatingDoldAndRealizableSequences}).
\end{proof}

Although~$(|L(f^n)|)$ is not always a Dold sequence,
nevertheless it belongs to that class  if
\begin{equation}\label{modulus-Lef}
L(f^n)=\det(I-A^n),
\end{equation}
for some~$k\times k$ integer-valued matrix~$A$.
This follows from the fact that it is realised
by a self-map of a torus. Indeed,
taking~$f\colon\T^k\to\T^k$, a toral map induced by
the linear map~$A$,
we know that~$f^n$ has exactly~$|\det(I-A^n)|$ fixed points
for each~$n\ge1$ (see, for example,~ \cite{MR2189944}).
The spaces for which~$N(f^n)$
and~$|L(f^n)|$ agree and Lefschetz numbers satisfy~\eqref{modulus-Lef} include, among others,  self-maps of nilmanifolds and some solvmanifolds.
Thus, in these cases,
Nielsen numbers of iterations
are Dold sequences. Similar arguments may be applied to
so-called infra-solvmanifold of type~$R$
by work of Fel{\cprime}shtyn and Lee~\cite[Th.~11.4]{MR3303933}
(moreover, in this setting ~$N(f^n)=R(f^n)$ for all~$n\ge1$,
showing that~$(R(f^n))$ is also a Dold sequence).
Various topological consequences of these facts are discussed by
Fel{\cprime}shtyn and Troitsky~\cite{MR2465448} and by
Fel{\cprime}shtyn and Lee~\cite{MR3546664,MR3777481}.

A similar theorem holds for solv- and infra-nilmanifolds under
additional assumptions.

\begin{theorem}[Kwasik {\&}
Lee~\protect{\cite[Th.~2]{MR972137},~\cite{MR1123659}}]
Let~$f$ be a continuous map
of a solvmanifold or infra-nilmanifold,
and assume that~$f$ is homotopically periodic
(that is,~$f^k$ is homotopic to the
identity for some~$k>1$).
Then~$(N(f^n))$ is a Dold sequence.
\end{theorem}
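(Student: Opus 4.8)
The plan is to prove that, under the hypothesis $f^k\simeq\mathrm{id}$, the sequence $(N(f^n))$ agrees with a non-negative Lefschetz sequence, and then to invoke the observation made just after Corollary~\ref{croollaryWhereYouTheLastPersionToKnow} that every Lefschetz sequence is a Dold sequence. The first ingredient is the linearization of maps of these manifolds. A self-map of a nilmanifold $M$ with nilpotent universal cover $G$ is homotopic to one induced by an endomorphism of $G$, with linear part $D$ acting on the Lie algebra $\mathfrak g$, and Anosov's theorem (together with the identification of $H^*(M;\Q)$ with Lie algebra cohomology) gives $N(f^n)=\lvert L(f^n)\rvert=\lvert\det(I-D^n)\rvert$ for all $n$, as already recalled above; for an infra-nilmanifold with finite holonomy group $\Phi$, or for a solvmanifold, the corresponding statement is a holonomy-averaged version $N(f^n)=\frac1{|\Phi|}\sum_{\alpha\in\Phi}\lvert\det(I-B_{\alpha,n})\rvert$ of the same Jacobian formula, in which each matrix $B_{\alpha,n}$ is assembled from $D$ and the (finite-order) holonomy matrices.

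\emph{Using homotopical periodicity.} By a rigidity result for homotopically periodic maps of these manifolds (a consequence of the Bieberbach-type rigidity of infra-nilmanifolds and solvmanifolds underlying the cited work of Kwasik and Lee), after a homotopy we may assume $f$ itself is a \emph{periodic} affine map, so its linear part $D$, and hence every matrix $B_{\alpha,n}$ occurring above, has finite order; in particular all of their eigenvalues are roots of unity. Now if $C$ is any real matrix all of whose eigenvalues $\lambda_1,\dots,\lambda_s$ lie on the unit circle, then $\det(I-C^n)=\prod_j(1-\lambda_j^n)$, and grouping the non-real eigenvalues into complex-conjugate pairs (each contributing a factor $\lvert 1-\lambda^n\rvert^2\ge0$) while each real eigenvalue $\pm1$ contributes a factor equal to $0$ or $1-(-1)^n\ge0$, we see that $\det(I-C^n)\ge0$ for every $n$. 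Consequently every term in the (possibly averaged) Jacobian formula for $N(f^n)$ is already non-negative, the absolute values are redundant, and $N(f^n)=L(f^n)$ for all $n$.

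\emph{Conclusion.} Applying the Lefschetz--Hopf formula~\eqref{LH} to each iterate $f^n$ gives $L(f^n)=\sum_{i\ge0}(-1)^i\trace(f_{*i}^n)$, where $f_{*i}$ is the integer matrix induced by $f$ on $H_i(M;\Z)$ modulo torsion, so $(N(f^n))=(L(f^n))$ is a Lefschetz sequence in the sense of Definition~\ref{definitionLefschetzSequence}. By Corollary~\ref{croollaryWhereYouTheLastPersionToKnow} together with the closure of the class of Dold sequences under finite sums and differences (the remark at the end of Section~\ref{sectionRelatingDoldAndRealizableSequences}), $(N(f^n))$ is a Dold sequence. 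As a sanity check, $f^{n+k}\simeq f^n$ shows $(N(f^n))$ is $k$-periodic, so it is in fact a bounded Dold sequence, consistently with Lemma~\ref{perDold-Lef} and Lemma~\ref{lemmaDoldPeriodicIffBounded}.

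\emph{Where the difficulty lies.} The algebra with traces and determinants is routine; the substance is concentrated in the step replacing $N(f^n)$ by $L(f^n)$ \emph{without} an absolute value. This rests on the Jacobian description of Nielsen numbers of iterations for maps of infra-nilmanifolds and solvmanifolds, and — crucially — on the non-negativity of the determinantal terms, which is precisely where the periodicity hypothesis is indispensable. It cannot be omitted: the example noted earlier, where the modulus $\bigl(\lvert-\eper_2+\eper_3\rvert\bigr)=(0,2,3,2,0,1,\dots)$ fails the Dold congruence modulo~$6$ although $(-\eper_2+\eper_3)$ is a perfectly good Lefschetz sequence, shows that one really must exclude sign changes of $L(f^n)$, and finite order of $D$ is exactly what excludes them. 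A secondary point is that the denominators $1/|\Phi|$ in the averaged formulas are harmless, since the averaged Lefschetz number is nothing but the trace of the lifted map on the $\Phi$-invariant sublattice of the homology of a finite nilmanifold covering of $M$, again an honest integer trace sequence.
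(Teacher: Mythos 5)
Your proposal and the paper ultimately hinge on the same reduction: show $N(f^n)=L(f^n)$ for all $n\ge 1$, note that $(L(f^n))$ is a Lefschetz sequence since it is an alternating sum of traces of integer matrices $f_{*i}$ induced on homology modulo torsion, and then apply Corollary~\ref{croollaryWhereYouTheLastPersionToKnow}. The difference is that the paper simply cites Kwasik--Lee for the identity $N(f^n)=L(f^n)$, whereas you attempt to re-derive it: you pass through the Anosov-type (averaged Jacobian) formula $N(f^n)=|L(f^n)|$ and then argue that the periodicity hypothesis forces the linear data to have all eigenvalues on the unit circle, so that each determinantal term $\det(I-C^n)$ is non-negative and the absolute values disappear. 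Your non-negativity computation is correct (conjugate pairs give $|1-\lambda^n|^2\ge 0$, real eigenvalues $\pm 1$ give $0$ or $2$, and finite order forces diagonalisability), and it is genuinely the heart of why the cited result is true for periodic linear parts.

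The caveat worth flagging is that your route leans on two facts that are themselves substantial theorems and not established here: first, the averaged Jacobian/Anosov formula for Nielsen numbers of iterates, which the paper itself hedges on by writing ``self-maps of nilmanifolds and \emph{some} solvmanifolds''; and second, the rigidity statement that a homotopically periodic self-map of an infra-nilmanifold or solvmanifold is homotopic to a finite-order affine map. In fact Kwasik--Lee prove $N(f^n)=L(f^n)$ for homotopically periodic maps more directly, without first establishing $N(f^n)=|L(f^n)|$ for arbitrary maps of such spaces, precisely because that intermediate statement can fail on general solvmanifolds. So the logical order of your argument does not quite match the source you would need to cite: you use the periodicity hypothesis only for non-negativity, whereas it is also needed upstream to get a Jacobian-type formula in the first place. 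Within a survey where one is content to cite Kwasik--Lee (as the paper does), your expanded sketch is a helpful gloss, but one should not present it as a self-contained derivation, and in a more careful write-up the appeal to the Anosov relation for solvmanifolds should be replaced by the periodicity-specific argument of Kwasik--Lee.

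Your closing remark that the $1/|\Phi|$ denominators are harmless because the averaged number is a trace on the $\Phi$-invariant sublattice of homology of a finite nilcover is correct and worth retaining, as is the sanity check via Lemma~\ref{perDold-Lef} that $(N(f^n))$ must be a bounded (hence periodic) Dold sequence.
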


This follows from the fact that~$L(f^n)=N(f^n)$ for all~$n\ge1$ for
any map of this type.

\begin{remark}
The Reidemeister number may be also defined in a purely
algebraic way for a group endomorphism~$\phi$
(as the number of~$\phi$-conjugacy clases).
In many cases the sequence of
Reidemeister number of~$\phi^n$ is a Dold sequence
(for details,
we refer
to the works of
Fel{\cprime}shtyn and
Troitsky~\cite{MR1697460, MR2377135, MR2465448} and the
references therein).
\end{remark}


\begin{bibdiv}
\begin{biblist}

\bib{MR2790978}{article}{
      author={Andr\'{a}s, S.},
       title={A combinatorial generalization of {W}ilson's theorem},
        date={2011},
        ISSN={1034-4942},
     journal={Australas. J. Combin.},
      volume={49},
       pages={265\ndash 272},
      review={\MR{2790978}},
}

\bib{MR2163516}{article}{
      author={Arias~de Reyna, J.},
       title={Dynamical zeta functions and {K}ummer congruences},
        date={2005},
        ISSN={0065-1036},
     journal={Acta Arith.},
      volume={119},
      number={1},
       pages={39\ndash 52},
         url={https://doi.org/10.4064/aa119-1-3},
      review={\MR{2163516}},
}

\bib{MR2078115}{book}{
      author={Arnol{\cprime}d, V.~I.},
       title={Arnold's problems},
   publisher={Springer-Verlag, Berlin; PHASIS, Moscow},
        date={2004},
        ISBN={3-540-20614-0},
        note={Translated and revised edition of the 2000 Russian original, With
  a preface by V. Philippov, A. Yakivchik and M. Peters},
      review={\MR{2078115}},
}

\bib{MR2061787}{article}{
      author={Arnol{\cprime}d, V.~I.},
       title={Fermat dynamics, matrix arithmetic, finite circle, and the finite
  {L}obachevski\u{\i} plane},
        date={2004},
        ISSN={0374-1990},
     journal={Funktsional. Anal. i Prilozhen.},
      volume={38},
      number={1},
       pages={1\ndash 15, 95},
         url={https://doi.org/10.1023/B:FAIA.0000024863.06462.68},
      review={\MR{2061787}},
}

\bib{MR2108521}{article}{
      author={Arnol{\cprime}d, V.~I.},
       title={The matrix {E}uler-{F}ermat theorem},
        date={2004},
        ISSN={1607-0046},
     journal={Izv. Ross. Akad. Nauk Ser. Mat.},
      volume={68},
      number={6},
       pages={61\ndash 70},
         url={https://doi.org/10.1070/IM2004v068n06ABEH000510},
      review={\MR{2108521}},
}

\bib{MR2261060}{article}{
      author={Arnol{\cprime}d, V.~I.},
       title={On the matricial version of {F}ermat-{E}uler congruences},
        date={2006},
        ISSN={0289-2316},
     journal={Jpn. J. Math.},
      volume={1},
      number={1},
       pages={1\ndash 24},
         url={https://doi.org/10.1007/s11537-006-0501-6},
      review={\MR{2261060}},
}

\bib{MR2261067}{article}{
      author={Arnol{\cprime}d, V.~I.},
       title={Publisher's erratum: ``{O}n the matricial version of
  {F}ermat-{E}uler congruences'' [{J}pn. {J}. {M}ath. {\bf 1} (2006), no. 1,
  1--24] by {V}. {I}. {A}rnold},
        date={2006},
        ISSN={0289-2316},
     journal={Jpn. J. Math.},
      volume={1},
      number={2},
       pages={469},
         url={https://doi.org/10.1007/s11537-006-0601-3},
      review={\MR{2261067}},
}

\bib{MR3069494}{article}{
      author={Artin, E.},
      author={Hasse, H.},
       title={Die beiden {E}rg\"{a}nzungss\"{a}tze zum reziprozit\"{a}tsgesetz
  der {$l^n$}-ten potenzreste im k\"{o}rper der {$l^n$}-ten {E}inheitswurzeln},
        date={1928},
        ISSN={0025-5858},
     journal={Abh. Math. Sem. Univ. Hamburg},
      volume={6},
      number={1},
       pages={146\ndash 162},
         url={https://doi.org/10.1007/BF02940607},
      review={\MR{3069494}},
}

\bib{MR176482}{article}{
      author={Artin, M.},
      author={Mazur, B.},
       title={On periodic points},
        date={1965},
        ISSN={0003-486X},
     journal={Ann. of Math. (2)},
      volume={81},
       pages={82\ndash 99},
         url={https://doi.org/10.2307/1970384},
      review={\MR{176482}},
}

\bib{MR1130026}{article}{
      author={Babenko, I.~K.},
      author={Bogaty\u{\i}, S.~A.},
       title={Behavior of the index of periodic points under iterations of a
  mapping},
        date={1991},
        ISSN={0373-2436},
     journal={Izv. Akad. Nauk SSSR Ser. Mat.},
      volume={55},
      number={1},
       pages={3\ndash 31},
         url={https://doi.org/10.1070/IM1992v038n01ABEH002185},
      review={\MR{1130026}},
}

\bib{MR3217030}{article}{
      author={Bell, J.},
      author={Miles, R.},
      author={Ward, T.},
       title={Towards a {P}\'{o}lya--{C}arlson dichotomy for algebraic
  dynamics},
        date={2014},
        ISSN={0019-3577},
     journal={Indag. Math. (N.S.)},
      volume={25},
      number={4},
       pages={652\ndash 668},
         url={https://doi.org/10.1016/j.indag.2014.04.005},
      review={\MR{3217030}},
}

\bib{MR2340599}{article}{
      author={Bertrand-Mathis, A.},
       title={Traces of algebraic integers and dynamical systems},
        date={2007},
        ISSN={0012-365X},
     journal={Discrete Math.},
      volume={307},
      number={17-18},
       pages={2176\ndash 2186},
         url={https://doi.org/10.1016/j.disc.2006.03.077},
      review={\MR{2340599}},
}

\bib{MR3855372}{article}{
      author={Beukers, F.},
      author={Houben, M.},
      author={Straub, A.},
       title={Gauss congruences for rational functions in several variables},
        date={2018},
        ISSN={0065-1036},
     journal={Acta Arith.},
      volume={184},
      number={4},
       pages={341\ndash 362},
         url={https://doi.org/10.4064/aa170614-13-7},
      review={\MR{3855372}},
}

\bib{MR1095305}{incollection}{
      author={Bogaty\u{\i}, S.~A.},
       title={Local indices of iterations of a holomorphic mapping},
        date={1989},
   booktitle={General topology. {S}paces and mappings ({R}ussian)},
   publisher={Moskov. Gos. Univ., Moscow},
       pages={48\ndash 61},
      review={\MR{1095305}},
}

\bib{MR0271401}{inproceedings}{
      author={Bowen, R.},
      author={Lanford, O.~E., III},
       title={Zeta functions of restrictions of the shift transformation},
        date={1970},
   booktitle={Global {A}nalysis ({P}roc. {S}ympos. {P}ure {M}ath., {V}ol.
  {XIV}, {B}erkeley, {C}alif., 1968)},
   publisher={Amer. Math. Soc., Providence, R.I.},
       pages={43\ndash 49},
      review={\MR{0271401}},
}

\bib{MR1097240}{article}{
      author={Boyle, M.},
      author={Handelman, D.},
       title={The spectra of nonnegative matrices via symbolic dynamics},
        date={1991},
        ISSN={0003-486X},
     journal={Ann. of Math. (2)},
      volume={133},
      number={2},
       pages={249\ndash 316},
         url={https://doi.org/10.2307/2944339},
      review={\MR{1097240}},
}

\bib{MR0433267}{incollection}{
      author={Browder, F.~E.},
       title={The {L}efschetz fixed point theorem and asymptotic fixed point
  theorems},
        date={1975},
   booktitle={Partial differential equations and related topics ({P}rogram,
  {T}ulane {U}niv., {N}ew {O}rleans, {L}a., 1974)},
   publisher={Springer},
       pages={96\ndash 122. Lecture Notes in Math., Vol. 446},
      review={\MR{0433267}},
}

\bib{MR994772}{article}{
      author={Brown, M.},
       title={On the fixed point index of iterates of planar homeomorphisms},
        date={1990},
        ISSN={0002-9939},
     journal={Proc. Amer. Math. Soc.},
      volume={108},
      number={4},
       pages={1109\ndash 1114},
         url={https://doi.org/10.2307/2047977},
      review={\MR{994772}},
}

\bib{MR1674916}{incollection}{
      author={Brown, R.~F.},
       title={Fixed point theory},
        date={1999},
   booktitle={History of topology},
   publisher={North-Holland, Amsterdam},
       pages={271\ndash 299},
         url={https://doi.org/10.1016/B978-044482375-5/50011-0},
      review={\MR{1674916}},
}

\bib{MR2170491}{book}{
      editor={Brown, R.~F.},
      editor={Furi, M.},
      editor={G\'{o}rniewicz, L.},
      editor={Jiang, B.},
       title={Handbook of topological fixed point theory},
   publisher={Springer, Dordrecht},
        date={2005},
        ISBN={978-1-4020-3221-9; 1-4020-3221-8},
         url={https://doi.org/10.1007/1-4020-3222-6},
      review={\MR{2170491}},
}

\bib{MR3894433}{article}{
      author={Byszewski, J.},
      author={Cornelissen, G.},
       title={Dynamics on abelian varieties in positive characteristic},
        date={2018},
        ISSN={1937-0652},
     journal={Algebra Number Theory},
      volume={12},
      number={9},
       pages={2185\ndash 2235},
         url={https://doi.org/10.2140/ant.2018.12.2185},
        note={With an appendix by Robert Royals and Thomas Ward},
      review={\MR{3894433}},
}

\bib{MR4062561}{article}{
      author={Byszewski, J.},
      author={Cornelissen, G.},
      author={Houben, M.},
       title={Dynamically affine maps in positive characteristic},
        date={2020},
     journal={Contemp. Math.},
      volume={744},
       pages={125\ndash 156},
         url={https://doi.org/10.1090/conm/744/14982},
      review={\MR{4062561}},
}

\bib{MR1461206}{article}{
      author={Chothi, V.},
      author={Everest, G.},
      author={Ward, T.},
       title={{$S$}-integer dynamical systems: periodic points},
        date={1997},
        ISSN={0075-4102},
     journal={J. Reine Angew. Math.},
      volume={489},
       pages={99\ndash 132},
         url={https://doi.org/10.1515/crll.1997.489.99},
      review={\MR{1461206}},
}

\bib{MR730267}{incollection}{
      author={Chow, S.-N.},
      author={Mallet-Paret, J.},
      author={Yorke, J.~A.},
       title={A periodic orbit index which is a bifurcation invariant},
        date={1983},
   booktitle={Geometric dynamics ({R}io de {J}aneiro, 1981)},
      series={Lecture Notes in Math.},
      volume={1007},
   publisher={Springer, Berlin},
       pages={109\ndash 131},
         url={https://doi.org/10.1007/BFb0061414},
      review={\MR{730267}},
}

\bib{MR2506120}{article}{
      author={Deligne, P.},
       title={Extended {E}uler congruence},
        date={2009},
        ISSN={1991-0061},
     journal={Funct. Anal. Other Math.},
      volume={2},
      number={2-4},
       pages={249\ndash 250},
         url={https://doi.org/10.1007/s11853-008-0026-z},
      review={\MR{2506120}},
}

\bib{MR0245499}{book}{
      author={Dickson, L.~E.},
       title={History of the theory of numbers. {V}ol. {I}: {D}ivisibility and
  primality},
   publisher={Chelsea Publishing Co., New York},
        date={1966},
      review={\MR{0245499}},
}

\bib{MR724012}{article}{
      author={Dold, A.},
       title={Fixed point indices of iterated maps},
        date={1983},
        ISSN={0020-9910},
     journal={Invent. Math.},
      volume={74},
      number={3},
       pages={419\ndash 435},
         url={https://doi.org/10.1007/BF01394243},
      review={\MR{724012}},
}

\bib{MR1950443}{article}{
      author={Du, B.-S.},
      author={Huang, S.-S.},
      author={Li, M.-C.},
       title={Generalized {F}ermat, double {F}ermat and {N}ewton sequences},
        date={2003},
        ISSN={0022-314X},
     journal={J. Number Theory},
      volume={98},
      number={1},
       pages={172\ndash 183},
         url={https://doi.org/10.1016/S0022-314X(02)00025-2},
      review={\MR{1950443}},
}

\bib{MR2113176}{article}{
      author={Du, B.-S.},
      author={Huang, S.-S.},
      author={Li, M.-C.},
       title={Newton, {F}ermat, and exactly realizable sequences},
        date={2005},
     journal={J. Integer Seq.},
      volume={8},
      number={1},
       pages={Article 05.1.2, 8},
      review={\MR{2113176}},
}

\bib{MR660439}{book}{
      author={Dugundji, J.},
      author={Granas, A.},
       title={Fixed point theory. {I}},
      series={Monografie Matematyczne [Mathematical Monographs]},
   publisher={Pa\'{n}stwowe Wydawnictwo Naukowe (PWN), Warsaw},
        date={1982},
      volume={61},
        ISBN={83-01-01142-4},
      review={\MR{660439}},
}

\bib{MR307280}{article}{
      author={England, J.~W.},
      author={Smith, R.~L.},
       title={The zeta function of automorphisms of solenoid groups},
        date={1972},
        ISSN={0022-247X},
     journal={J. Math. Anal. Appl.},
      volume={39},
       pages={112\ndash 121},
         url={https://doi.org/10.1016/0022-247X(72)90228-4},
      review={\MR{307280}},
}

\bib{MR2339472}{article}{
      author={Everest, G.},
      author={Miles, R.},
      author={Stevens, S.},
      author={Ward, T.},
       title={Orbit-counting in non-hyperbolic dynamical systems},
        date={2007},
        ISSN={0075-4102},
     journal={J. Reine Angew. Math.},
      volume={608},
       pages={155\ndash 182},
         url={https://doi.org/10.1515/CRELLE.2007.056},
      review={\MR{2339472}},
}

\bib{MR2550149}{article}{
      author={Everest, G.},
      author={Miles, R.},
      author={Stevens, S.},
      author={Ward, T.},
       title={Dirichlet series for finite combinatorial rank dynamics},
        date={2010},
        ISSN={0002-9947},
     journal={Trans. Amer. Math. Soc.},
      volume={362},
      number={1},
       pages={199\ndash 227},
         url={https://doi.org/10.1090/S0002-9947-09-04962-9},
      review={\MR{2550149}},
}

\bib{MR2180241}{article}{
      author={Everest, G.},
      author={Stangoe, V.},
      author={Ward, T.},
       title={Orbit counting with an isometric direction},
        date={2005},
     journal={Contemp. Math.},
      volume={385},
       pages={293\ndash 302},
         url={https://doi.org/10.1090/conm/385/07202},
      review={\MR{2180241}},
}

\bib{MR1990179}{book}{
      author={Everest, G.},
      author={van~der Poorten, A.},
      author={Shparlinski, I.},
      author={Ward, T.},
       title={Recurrence sequences},
      series={Mathematical Surveys and Monographs},
   publisher={American Mathematical Society, Providence, RI},
        date={2003},
      volume={104},
        ISBN={0-8218-3387-1},
         url={https://doi.org/10.1090/surv/104},
      review={\MR{1990179}},
}

\bib{MR1938222}{article}{
      author={Everest, G.},
      author={van~der Poorten, A.~J.},
      author={Puri, Y.},
      author={Ward, T.},
       title={Integer sequences and periodic points},
        date={2002},
        ISSN={1530-7638},
     journal={J. Integer Seq.},
      volume={5},
      number={2},
       pages={Article 02.2.3, 10},
         url={https://cs.uwaterloo.ca/journals/JIS/VOL5/Ward/ward2.pdf},
      review={\MR{1938222}},
}

\bib{MR1707699}{article}{
      author={Fagella, N.},
      author={Llibre, J.},
       title={Periodic points of holomorphic maps via {L}efschetz numbers},
        date={2000},
        ISSN={0002-9947},
     journal={Trans. Amer. Math. Soc.},
      volume={352},
      number={10},
       pages={4711\ndash 4730},
         url={https://doi.org/10.1090/S0002-9947-00-02608-8},
      review={\MR{1707699}},
}

\bib{MR1697460}{article}{
      author={Fel{\cprime}shtyn, A.},
       title={Dynamical zeta functions, {N}ielsen theory and {R}eidemeister
  torsion},
        date={2000},
        ISSN={0065-9266},
     journal={Mem. Amer. Math. Soc.},
      volume={147},
      number={699},
       pages={xii+146},
         url={https://doi.org/10.1090/memo/0699},
      review={\MR{1697460}},
}

\bib{MR3303933}{article}{
      author={Fel{\cprime}shtyn, A.},
      author={Lee, J.~B.},
       title={The {N}ielsen and {R}eidemeister numbers of maps on
  infra-solvmanifolds of type ({R})},
        date={2015},
        ISSN={0166-8641},
     journal={Topology Appl.},
      volume={181},
       pages={62\ndash 103},
         url={https://doi.org/10.1016/j.topol.2014.12.003},
      review={\MR{3303933}},
}

\bib{MR3546664}{article}{
      author={Fel{\cprime}shtyn, A.},
      author={Lee, J.~B.},
       title={The {N}ielsen and {R}eidemeister theories of iterations on
  infra-solvmanifolds of type ({R}) and poly-{B}ieberbach groups},
        date={2016},
     journal={Contemp. Math.},
      volume={669},
       pages={77\ndash 103},
         url={https://doi.org/10.1090/conm/669/13424},
      review={\MR{3546664}},
}

\bib{MR3777481}{article}{
      author={Fel{\cprime}shtyn, A.},
      author={Lee, J.~B.},
       title={The {N}ielsen numbers of iterations of maps on
  infra-solvmanifolds of type {$({\rm R})$} and periodic orbits},
        date={2018},
        ISSN={1661-7738},
     journal={J. Fixed Point Theory Appl.},
      volume={20},
      number={2},
       pages={Art. 62, 31},
         url={https://doi.org/10.1007/s11784-018-0541-6},
      review={\MR{3777481}},
}

\bib{MR2377135}{article}{
      author={Fel{\cprime}shtyn, A.},
      author={Troitsky, E.},
       title={Twisted {B}urnside-{F}robenius theory for discrete groups},
        date={2007},
        ISSN={0075-4102},
     journal={J. Reine Angew. Math.},
      volume={613},
       pages={193\ndash 210},
         url={https://doi.org/10.1515/CRELLE.2007.097},
      review={\MR{2377135}},
}

\bib{MR2465448}{article}{
      author={Fel{\cprime}shtyn, A.},
      author={Troitsky, E.},
       title={Geometry of {R}eidemeister classes and twisted {B}urnside
  theorem},
        date={2008},
        ISSN={1865-2433},
     journal={J. K-Theory},
      volume={2},
      number={3},
       pages={463\ndash 506},
         url={https://doi.org/10.1017/is008001006jkt028},
      review={\MR{2465448}},
}

\bib{MR4015520}{article}{
      author={Gorodetsky, O.},
       title={{$q$}-congruences, with applications to supercongruences and the
  cyclic sieving phenomenon},
        date={2019},
        ISSN={1793-0421},
     journal={Int. J. Number Theory},
      volume={15},
      number={9},
       pages={1919\ndash 1968},
         url={https://doi.org/10.1142/S1793042119501069},
      review={\MR{4015520}},
}

\bib{MR1874085}{article}{
      author={Graff, G.},
       title={Indices of iterations and periodic points of simplicial maps of
  smooth type},
        date={2002},
        ISSN={0166-8641},
     journal={Topology Appl.},
      volume={117},
      number={1},
       pages={77\ndash 87},
         url={https://doi.org/10.1016/S0166-8641(00)00116-4},
      review={\MR{1874085}},
}

\bib{graffgeneralizeddold}{misc}{
      author={Graff, G.},
      author={Gulgowski, J.},
      author={Lebied\'{z}, M.},
       title={Generalized {D}old sequences on partially-ordered sets},
        note={preprint},
}

\bib{MR2754353}{article}{
      author={Graff, G.},
      author={Jezierski, J.},
       title={Minimizing the number of periodic points for smooth maps.
  {N}on-simply connected case},
        date={2011},
        ISSN={0166-8641},
     journal={Topology Appl.},
      volume={158},
      number={3},
       pages={276\ndash 290},
         url={https://doi.org/10.1016/j.topol.2010.11.002},
      review={\MR{2754353}},
}

\bib{MR3071942}{article}{
      author={Graff, G.},
      author={Jezierski, J.},
       title={Combinatorial scheme of finding minimal number of periodic points
  for smooth self-maps of simply connected manifolds},
        date={2013},
        ISSN={1661-7738},
     journal={J. Fixed Point Theory Appl.},
      volume={13},
      number={1},
       pages={63\ndash 84},
         url={https://doi.org/10.1007/s11784-012-0076-1},
      review={\MR{3071942}},
}

\bib{MR3622692}{article}{
      author={Graff, G.},
      author={Jezierski, J.},
       title={Minimal number of periodic points of smooth boundary-preserving
  self-maps of simply-connected manifolds},
        date={2017},
        ISSN={0046-5755},
     journal={Geom. Dedicata},
      volume={187},
       pages={241\ndash 258},
         url={https://doi.org/10.1007/s10711-016-0199-4},
      review={\MR{3622692}},
}

\bib{MR2813888}{article}{
      author={Graff, G.},
      author={Jezierski, J.},
      author={Nowak-Przygodzki, P.},
       title={Fixed point indices of iterated smooth maps in arbitrary
  dimension},
        date={2011},
        ISSN={0022-0396},
     journal={J. Differential Equations},
      volume={251},
      number={6},
       pages={1526\ndash 1548},
         url={https://doi.org/10.1016/j.jde.2011.05.024},
      review={\MR{2813888}},
}

\bib{MR3921437}{article}{
      author={Graff, G.},
      author={Lebied\'{z}, M.},
      author={Myszkowski, A.},
       title={Periodic expansion in determining minimal sets of {L}efschetz
  periods for {M}orse-{S}male diffeomorphisms},
        date={2019},
        ISSN={1661-7738},
     journal={J. Fixed Point Theory Appl.},
      volume={21},
      number={2},
       pages={Paper No. 47, 21},
         url={https://doi.org/10.1007/s11784-019-0680-4},
      review={\MR{3921437}},
}

\bib{MR3917044}{article}{
      author={Graff, G.},
      author={Lebied\'{z}, M.},
      author={Nowak-Przygodzki, P.},
       title={Generating sequences of {L}efschetz numbers of iterates},
        date={2019},
        ISSN={0026-9255},
     journal={Monatsh. Math.},
      volume={188},
      number={3},
       pages={511\ndash 525},
         url={https://doi.org/10.1007/s00605-018-1193-8},
      review={\MR{3917044}},
}

\bib{MR2037272}{article}{
      author={Graff, G.},
      author={Nowak-Przygodzki, P.},
       title={Fixed point indices of iterations of planar homeomorphisms},
        date={2003},
        ISSN={1230-3429},
     journal={Topol. Methods Nonlinear Anal.},
      volume={22},
      number={1},
       pages={159\ndash 166},
         url={https://doi.org/10.12775/TMNA.2003.033},
      review={\MR{2037272}},
}

\bib{MR3846955}{article}{
      author={Graff, G.},
      author={Signerska-Rynkowska, J.},
       title={Dynamics of field line mappings in magnetic flux tubes},
        date={2018},
        ISSN={1385-0172},
     journal={Math. Phys. Anal. Geom.},
      volume={21},
      number={3},
       pages={Paper No. 26, 18},
         url={https://doi.org/10.1007/s11040-018-9284-2},
      review={\MR{3846955}},
}

\bib{MR3910798}{article}{
      author={Guo, V. J.~W.},
      author={Zudilin, W.},
       title={A {$q$}-microscope for supercongruences},
        date={2019},
        ISSN={0001-8708},
     journal={Adv. Math.},
      volume={346},
       pages={329\ndash 358},
         url={https://doi.org/10.1016/j.aim.2019.02.008},
      review={\MR{3910798}},
}

\bib{MR568909}{book}{
      author={Hardy, G.~H.},
      author={Wright, E.~M.},
       title={An introduction to the theory of numbers},
     edition={Fifth},
   publisher={The Clarendon Press, Oxford University Press},
        date={1979},
        ISBN={0-19-853170-2; 0-19-853171-0},
      review={\MR{568909}},
}

\bib{MR3343550}{article}{
      author={Hern\'{a}ndez-Corbato, L.},
      author={Ruiz~del Portal, F.~R.},
       title={Fixed point indices of planar continuous maps},
        date={2015},
        ISSN={1078-0947},
     journal={Discrete Contin. Dyn. Syst.},
      volume={35},
      number={7},
       pages={2979\ndash 2995},
         url={https://doi.org/10.3934/dcds.2015.35.2979},
      review={\MR{3343550}},
}

\bib{OEIS}{misc}{
      author={Inc., OEIS~Foundation},
       title={{The On-Line Encyclopedia of Integer Sequences}},
        date={2021},
         url={http://oeis.org},
}

\bib{MR4002553}{article}{
      author={Jaidee, S.},
      author={Moss, P.},
      author={Ward, T.},
       title={Time-changes preserving zeta functions},
        date={2019},
        ISSN={0002-9939},
     journal={Proc. Amer. Math. Soc.},
      volume={147},
      number={10},
       pages={4425\ndash 4438},
         url={https://doi.org/10.1090/proc/14574},
      review={\MR{4002553}},
}

\bib{MR2189944}{book}{
      author={Jezierski, J.},
      author={Marzantowicz, W.},
       title={Homotopy methods in topological fixed and periodic points
  theory},
      series={Topological Fixed Point Theory and Its Applications},
   publisher={Springer, Dordrecht},
        date={2006},
      volume={3},
        ISBN={978-1-4020-3930-1; 1-4020-3930-1},
      review={\MR{2189944}},
}

\bib{MR685755}{book}{
      author={Jiang, B.~J.},
       title={Lectures on {N}ielsen fixed point theory},
      series={Contemporary Mathematics},
   publisher={American Mathematical Society, Providence, R.I.},
        date={1983},
      volume={14},
        ISBN={0-8218-5014-8},
      review={\MR{685755}},
}

\bib{MR3735835}{article}{
      author={Jiang, B.~J.},
      author={Zhao, X.~Z.},
       title={Some developments in {N}ielsen fixed point theory},
        date={2018},
        ISSN={1439-8516},
     journal={Acta Math. Sin. (Engl. Ser.)},
      volume={34},
      number={1},
       pages={91\ndash 102},
         url={https://doi.org/10.1007/s10114-017-6503-x},
      review={\MR{3735835}},
}

\bib{MR1726706}{article}{
      author={Kaloshin, V.~Yu.},
       title={An extension of the {A}rtin-{M}azur theorem},
        date={1999},
        ISSN={0003-486X},
     journal={Ann. of Math. (2)},
      volume={150},
      number={2},
       pages={729\ndash 741},
         url={https://doi.org/10.2307/121093},
      review={\MR{1726706}},
}

\bib{MR1757015}{article}{
      author={Kaloshin, V.~Yu.},
       title={Generic diffeomorphisms with superexponential growth of number of
  periodic orbits},
        date={2000},
        ISSN={0010-3616},
     journal={Comm. Math. Phys.},
      volume={211},
      number={1},
       pages={253\ndash 271},
         url={https://doi.org/10.1007/s002200050811},
      review={\MR{1757015}},
}

\bib{MR2276768}{article}{
      author={Kaloshin, V.~Yu.},
      author={Hunt, B.~R.},
       title={Stretched exponential estimates on growth of the number of
  periodic points for prevalent diffeomorphisms. {I}},
        date={2007},
        ISSN={0003-486X},
     journal={Ann. of Math. (2)},
      volume={165},
      number={1},
       pages={89\ndash 170},
         url={https://doi.org/10.4007/annals.2007.165.89},
      review={\MR{2276768}},
}

\bib{zbMATH02708691}{article}{
      author={{Kantor}, S.},
       title={{Sur le nombre des groupes cycliques dans une transformation de
  l'espace}},
    language={French},
        date={1880},
        ISSN={0001-4036},
     journal={{C. R. Acad. Sci., Paris}},
      volume={90},
       pages={1156\ndash 1158},
}

\bib{MR573822}{article}{
      author={Katok, A.},
       title={Lyapunov exponents, entropy and periodic orbits for
  diffeomorphisms},
        date={1980},
        ISSN={0073-8301},
     journal={Inst. Hautes \'{E}tudes Sci. Publ. Math.},
      number={51},
       pages={137\ndash 173},
         url={http://www.numdam.org/item?id=PMIHES_1980__51__137_0},
      review={\MR{573822}},
}

\bib{kenison2020positivity}{misc}{
      author={Kenison, G.},
      author={Klurman, O.},
      author={Lefaucheux, E.},
      author={Luca, F.},
      author={Moree, P.},
      author={Ouaknine, J.},
      author={Whiteland, M.~A.},
      author={Worrell, J.},
       title={On positivity and minimality for second-order holonomic
  sequences},
        date={2020},
        note={https://arxiv.org/abs/2007.12282},
}

\bib{MR2410253}{article}{
      author={Kim, J.~Y.},
      author={Kim, S.~S.},
      author={Zhao, X.},
       title={Minimal sets of periods for maps on the {K}lein bottle},
        date={2008},
        ISSN={0304-9914},
     journal={J. Korean Math. Soc.},
      volume={45},
      number={3},
       pages={883\ndash 902},
         url={https://doi.org/10.4134/JKMS.2008.45.3.883},
      review={\MR{2410253}},
}

\bib{MR1775737}{article}{
      author={Kim, K.~H.},
      author={Ormes, N.~S.},
      author={Roush, F.~W.},
       title={The spectra of nonnegative integer matrices via formal power
  series},
        date={2000},
        ISSN={0894-0347},
     journal={J. Amer. Math. Soc.},
      volume={13},
      number={4},
       pages={773\ndash 806},
         url={https://doi.org/10.1090/S0894-0347-00-00342-8},
      review={\MR{1775737}},
}

\bib{MR0466081}{book}{
      author={Koblitz, N.},
       title={{$p$}-adic numbers, {$p$}-adic analysis, and zeta-functions},
   publisher={Springer-Verlag, New York-Heidelberg},
        date={1977},
        ISBN={0-387-90274-0},
        note={Graduate Texts in Mathematics, Vol. 58},
      review={\MR{0466081}},
}

\bib{MR736839}{book}{
      author={Krasnosel{\cprime}ski\u{\i}, M.~A.},
      author={Zabre\u{\i}ko, P.~P.},
       title={Geometrical methods of nonlinear analysis},
      series={Grundlehren der Mathematischen Wissenschaften [Fundamental
  Principles of Mathematical Sciences]},
   publisher={Springer-Verlag, Berlin},
        date={1984},
      volume={263},
        ISBN={3-540-12945-6},
         url={https://doi.org/10.1007/978-3-642-69409-7},
        note={Translated from the Russian by Christian C. Fenske},
      review={\MR{736839}},
}

\bib{MR972137}{article}{
      author={Kwasik, S.},
      author={Lee, K.~B.},
       title={The {N}ielsen numbers of homotopically periodic maps of
  infranilmanifolds},
        date={1988},
        ISSN={0024-6107},
     journal={J. London Math. Soc. (2)},
      volume={38},
      number={3},
       pages={544\ndash 554},
         url={https://doi.org/10.1112/jlms/s2-38.3.544},
      review={\MR{972137}},
}

\bib{MR2739062}{article}{
      author={Le~Calvez, P.},
      author={Ruiz~del Portal, F.~R.},
      author={Salazar, J.~M.},
       title={Indices of the iterates of {$\mathbb R^3$}-homeomorphisms at
  fixed points which are isolated invariant sets},
        date={2010},
        ISSN={0024-6107},
     journal={J. Lond. Math. Soc. (2)},
      volume={82},
      number={3},
       pages={683\ndash 696},
         url={https://doi.org/10.1112/jlms/jdq050},
      review={\MR{2739062}},
}

\bib{MR1477759}{article}{
      author={Le~Calvez, P.},
      author={Yoccoz, J.-C.},
       title={Un th\'{e}or\`eme d'indice pour les hom\'{e}omorphismes du plan
  au voisinage d'un point fixe},
        date={1997},
        ISSN={0003-486X},
     journal={Ann. of Math. (2)},
      volume={146},
      number={2},
       pages={241\ndash 293},
         url={https://doi.org/10.2307/2952463},
      review={\MR{1477759}},
}

\bib{MR1123659}{article}{
      author={Lee, K.~B.},
       title={Nielsen numbers of periodic maps on solvmanifolds},
        date={1992},
        ISSN={0002-9939},
     journal={Proc. Amer. Math. Soc.},
      volume={116},
      number={2},
       pages={575\ndash 579},
         url={https://doi.org/10.2307/2159770},
      review={\MR{1123659}},
}

\bib{MR1501331}{article}{
      author={Lefschetz, S.},
       title={Intersections and transformations of complexes and manifolds},
        date={1926},
        ISSN={0002-9947},
     journal={Trans. Amer. Math. Soc.},
      volume={28},
      number={1},
       pages={1\ndash 49},
         url={https://doi.org/10.2307/1989171},
      review={\MR{1501331}},
}

\bib{MR684244}{article}{
      author={Lind, D.~A.},
       title={Dynamical properties of quasihyperbolic toral automorphisms},
        date={1982},
        ISSN={0143-3857},
     journal={Ergodic Theory Dynamical Systems},
      volume={2},
      number={1},
       pages={49\ndash 68},
         url={https://doi.org/10.1017/s0143385700009573},
      review={\MR{684244}},
}

\bib{MR1197046}{article}{
      author={Llibre, J.},
       title={A note on the set of periods for {K}lein bottle maps},
        date={1993},
        ISSN={0030-8730},
     journal={Pacific J. Math.},
      volume={157},
      number={1},
       pages={87\ndash 93},
         url={http://projecteuclid.org/euclid.pjm/1102634865},
      review={\MR{1197046}},
}

\bib{MR3235353}{article}{
      author={Llibre, J.},
      author={Sirvent, V.~F.},
       title={A survey on the minimal sets of {L}efschetz periods for
  {M}orse-{S}male diffeomorphisms on some closed manifolds},
        date={2013},
        ISSN={0797-1443},
     journal={Publ. Mat. Urug.},
      volume={14},
       pages={171\ndash 186},
      review={\MR{3235353}},
}

\bib{zbMATH02703462}{article}{
      author={{Lucas}, E.},
       title={{Sur la g\'en\'eralisation du th\'eor\`eme de Fermat}},
    language={French},
        date={1883},
        ISSN={0001-4036},
     journal={{C. R. Acad. Sci., Paris}},
      volume={96},
       pages={1300\ndash 1301},
}

\bib{MR556887}{article}{
      author={Markus, L.},
      author={Meyer, Kenneth~R.},
       title={Periodic orbits and solenoids in generic {H}amiltonian dynamical
  systems},
        date={1980},
        ISSN={0002-9327},
     journal={Amer. J. Math.},
      volume={102},
      number={1},
       pages={25\ndash 92},
         url={https://doi.org/10.2307/2374171},
      review={\MR{556887}},
}

\bib{MR1696325}{article}{
      author={Marzantowicz, W.},
      author={Przygodzki, P.~M.},
       title={Finding periodic points of a map by use of a {$k$}-adic
  expansion},
        date={1999},
        ISSN={1078-0947},
     journal={Discrete Contin. Dynam. Systems},
      volume={5},
      number={3},
       pages={495\ndash 514},
         url={https://doi.org/10.3934/dcds.1999.5.495},
      review={\MR{1696325}},
}

\bib{MR2302587}{article}{
      author={Marzantowicz, W.},
      author={W\'{o}jcik, K.},
       title={Periodic segment implies infinitely many periodic solutions},
        date={2007},
        ISSN={0002-9939},
     journal={Proc. Amer. Math. Soc.},
      volume={135},
      number={8},
       pages={2637\ndash 2647},
         url={https://doi.org/10.1090/S0002-9939-07-08750-3},
      review={\MR{2302587}},
}

\bib{MR3461433}{article}{
      author={Mellit, Anton},
      author={Vlasenko, Masha},
       title={Dwork's congruences for the constant terms of powers of a
  {L}aurent polynomial},
        date={2016},
        ISSN={1793-0421},
     journal={Int. J. Number Theory},
      volume={12},
      number={2},
       pages={313\ndash 321},
         url={https://doi.org/10.1142/S1793042116500184},
      review={\MR{3461433}},
}

\bib{MR743965}{article}{
      author={Mignotte, M.},
      author={Shorey, T.~N.},
      author={Tijdeman, R.},
       title={The distance between terms of an algebraic recurrence sequence},
        date={1984},
        ISSN={0075-4102},
     journal={J. Reine Angew. Math.},
      volume={349},
       pages={63\ndash 76},
      review={\MR{743965}},
}

\bib{MR3330348}{article}{
      author={Miles, R.},
      author={Staines, M.},
      author={Ward, T.},
       title={Dynamical invariants for group automorphisms},
        date={2015},
     journal={Contemp. Math.},
      volume={631},
       pages={231\ndash 258},
         url={https://doi.org/10.1090/conm/631/12606},
      review={\MR{3330348}},
}

\bib{MR2465676}{article}{
      author={Miles, R.},
      author={Ward, T.},
       title={Orbit-counting for nilpotent group shifts},
        date={2009},
        ISSN={0002-9939},
     journal={Proc. Amer. Math. Soc.},
      volume={137},
      number={4},
       pages={1499\ndash 1507},
         url={https://doi.org/10.1090/S0002-9939-08-09649-4},
      review={\MR{2465676}},
}

\bib{MR2650793}{article}{
      author={Miles, R.},
      author={Ward, T.},
       title={A dichotomy in orbit growth for commuting automorphisms},
        date={2010},
        ISSN={0024-6107},
     journal={J. Lond. Math. Soc. (2)},
      volume={81},
      number={3},
       pages={715\ndash 726},
         url={https://doi.org/10.1112/jlms/jdq010},
      review={\MR{2650793}},
}

\bib{MR3195758}{article}{
      author={Minton, G.~T.},
       title={Linear recurrence sequences satisfying congruence conditions},
        date={2014},
        ISSN={0002-9939},
     journal={Proc. Amer. Math. Soc.},
      volume={142},
      number={7},
       pages={2337\ndash 2352},
         url={https://doi.org/10.1090/S0002-9939-2014-12168-X},
      review={\MR{3195758}},
}

\bib{miska2021stirling}{misc}{
      author={Miska, P.},
      author={Ward, T.},
       title={Stirling number and periodic points},
        date={2021},
        note={https://arxiv.org/abs/2102.07561},
}

\bib{MR1577896}{article}{
      author={M\"{o}bius, A.~F.},
       title={\"{U}ber eine besondere {A}rt von {U}mkehrung der {R}eihen},
        date={1832},
        ISSN={0075-4102},
     journal={J. Reine Angew. Math.},
      volume={9},
       pages={105\ndash 123},
         url={https://doi.org/10.1515/crll.1832.9.105},
      review={\MR{1577896}},
}

\bib{pm}{thesis}{
      author={Moss, P.},
       title={The arithmetic of realizable sequences},
        type={Ph.D. Thesis},
        date={2003},
}

\bib{mossfibo}{article}{
      author={Moss, P.},
      author={Ward, T.},
       title={Fibonacci along even powers is (almost) realizable},
        date={to appear},
     journal={Fibonacci Quart.},
        note={https://arxiv.org/abs/2011.13068},
}

\bib{MR2511223}{article}{
      author={Neum\"{a}rker, N.},
       title={Realizability of integer sequences as differences of fixed point
  count sequences},
        date={2009},
     journal={J. Integer Seq.},
      volume={12},
      number={4},
       pages={Article 09.4.5, 8},
      review={\MR{2511223}},
}

\bib{zbMATH02597378}{article}{
      author={Nielsen, J.},
       title={{Ringfladen og Planen}},
        date={1924},
     journal={{Mat. Tidsskr. B}},
      volume={1924},
       pages={1\ndash 22},
}

\bib{MR1555256}{article}{
      author={Nielsen, J.},
       title={Untersuchungen zur {T}opologie der geschlossenen zweiseitigen
  {F}l\"{a}chen},
        date={1927},
        ISSN={0001-5962},
     journal={Acta Math.},
      volume={50},
      number={1},
       pages={189\ndash 358},
         url={https://doi.org/10.1007/BF02421324},
      review={\MR{1555256}},
}

\bib{MR284888}{article}{
      author={Nussbaum, R.~D.},
       title={Some fixed point theorems},
        date={1971},
        ISSN={0002-9904},
     journal={Bull. Amer. Math. Soc.},
      volume={77},
       pages={360\ndash 365},
         url={https://doi.org/10.1090/S0002-9904-1971-12694-0},
      review={\MR{284888}},
}

\bib{MR3238382}{incollection}{
      author={Ouaknine, J.},
      author={Worrell, J.},
       title={On the positivity problem for simple linear recurrence
  sequences},
        date={2014},
   booktitle={Automata, languages, and programming. {P}art {II}},
      series={Lecture Notes in Comput. Sci.},
      volume={8573},
   publisher={Springer, Heidelberg},
       pages={318\ndash 329},
         url={https://doi.org/10.1007/978-3-662-43951-7_27},
      review={\MR{3238382}},
}

\bib{MR2486259}{article}{
      author={Pakapongpun, A.},
      author={Ward, T.},
       title={Functorial orbit counting},
        date={2009},
        ISSN={1530-7638},
     journal={J. Integer Seq.},
      volume={12},
      number={2},
       pages={Article 09.2.4, 20},
         url={https://cs.uwaterloo.ca/journals/JIS/VOL12/Ward/ward17.pdf},
      review={\MR{2486259}},
}

\bib{MR3194906}{article}{
      author={Pakapongpun, A.},
      author={Ward, T.},
       title={Orbits for products of maps},
        date={2014},
        ISSN={1686-0209},
     journal={Thai J. Math.},
      volume={12},
      number={1},
       pages={33\ndash 44},
  url={http://thaijmath.in.cmu.ac.th/index.php/thaijmath/article/view/565},
      review={\MR{3194906}},
}

\bib{MR391074}{article}{
      author={Peitgen, H.~O.},
       title={On the {L}efschetz number for iterates of continuous mappings},
        date={1976},
        ISSN={0002-9939},
     journal={Proc. Amer. Math. Soc.},
      volume={54},
       pages={441\ndash 444},
         url={https://doi.org/10.2307/2040837},
      review={\MR{391074}},
}

\bib{zbMATH02703463}{article}{
      author={Pellet, A.},
       title={{Sur la g\'en\'eralisation du th\'eor\`eme de Fermat}},
    language={French},
        date={1883},
        ISSN={0001-4036},
     journal={{C. R. Acad. Sci., Paris}},
      volume={96},
       pages={1301\ndash 1302},
}

\bib{petersen}{article}{
      author={Petersen, J.},
       title={Beviser for wilsons og fermats theoremer},
        date={1872},
     journal={Tidsskrift for mathematik},
      volume={2},
       pages={64\ndash 65},
}

\bib{MR2022383}{article}{
      author={Pokrovskii, A.~V.},
      author={Rasskazov, O.~A.},
       title={On the use of the topological degree theory in broken orbits
  analysis},
        date={2004},
        ISSN={0002-9939},
     journal={Proc. Amer. Math. Soc.},
      volume={132},
      number={2},
       pages={567\ndash 577},
         url={https://doi.org/10.1090/S0002-9939-03-07036-9},
      review={\MR{2022383}},
}

\bib{yash}{thesis}{
      author={Puri, Y.},
       title={Arithmetic of numbers of periodic points},
        type={Ph.D. Thesis},
        date={2001},
}

\bib{MR1873399}{article}{
      author={Puri, Y.},
      author={Ward, T.},
       title={Arithmetic and growth of periodic orbits},
        date={2001},
        ISSN={1530-7638},
     journal={J. Integer Seq.},
      volume={4},
      number={2},
       pages={Article 01.2.1, 18},
         url={https://cs.uwaterloo.ca/journals/JIS/VOL4/WARD/short.pdf},
      review={\MR{1873399}},
}

\bib{MR1866354}{article}{
      author={Puri, Y.},
      author={Ward, T.},
       title={A dynamical property unique to the {L}ucas sequence},
        date={2001},
        ISSN={0015-0517},
     journal={Fibonacci Quart.},
      volume={39},
      number={5},
       pages={398\ndash 402},
         url={https://fq.math.ca/Scanned/39-5/puri.pdf},
      review={\MR{1866354}},
}

\bib{MR174487}{article}{
      author={Rota, G.-C.},
       title={On the foundations of combinatorial theory. {I}. {T}heory of
  {M}\"{o}bius functions},
        date={1964},
     journal={Z. Wahrscheinlichkeitstheorie und Verw. Gebiete},
      volume={2},
       pages={340\ndash 368 (1964)},
         url={https://doi.org/10.1007/BF00531932},
      review={\MR{174487}},
}

\bib{MR1920859}{article}{
      author={Ruelle, D.},
       title={Dynamical zeta functions and transfer operators},
        date={2002},
        ISSN={0002-9920},
     journal={Notices Amer. Math. Soc.},
      volume={49},
      number={8},
       pages={887\ndash 895},
         url={http://www.ams.org/notices/200208/fea-ruelle.pdf},
      review={\MR{MR1920859 (2003d:37026)}},
}

\bib{MR2645114}{article}{
      author={Ruiz~del Portal, F.~R.},
      author={Salazar, J.~M.},
       title={A {P}oincar\'{e} formula for the fixed point indices of the
  iterates of arbitrary planar homeomorphisms},
        date={2010},
        ISSN={1687-1820},
     journal={Fixed Point Theory Appl.},
       pages={Art. ID 323069, 31},
         url={https://doi.org/10.1155/2010/323069},
      review={\MR{2645114}},
}

\bib{MR0157941}{book}{
      author={Salem, R.},
       title={Algebraic numbers and {F}ourier analysis},
   publisher={D. C. Heath and Co., Boston, Mass.},
        date={1963},
      review={\MR{0157941}},
}

\bib{MR3418804}{article}{
      author={Samol, K.},
      author={van Straten, D.},
       title={Dwork congruences and reflexive polytopes},
        date={2015},
        ISSN={2195-4755},
     journal={Ann. Math. Qu\'{e}.},
      volume={39},
      number={2},
       pages={185\ndash 203},
         url={https://doi.org/10.1007/s40316-015-0031-9},
      review={\MR{3418804}},
}

\bib{MR1578213}{article}{
      author={Sch\"{o}nemann, T.},
       title={Theorie der symmetrischen {F}unctionen der {W}urzeln einer
  {G}leichung. {A}llgemeine {S}\"{a}tze \"{u}ber {C}ongruenzen nebst einigen
  {A}nwendungen derselben. ({S}chlu\ss der {A}bhandlung)},
        date={1839},
        ISSN={0075-4102},
     journal={J. Reine Angew. Math.},
      volume={19},
       pages={289\ndash 308},
         url={https://doi.org/10.1515/crll.1839.19.289},
      review={\MR{1578213}},
}

\bib{MR228014}{article}{
      author={Smale, S.},
       title={Differentiable dynamical systems},
        date={1967},
        ISSN={0002-9904},
     journal={Bull. Amer. Math. Soc.},
      volume={73},
       pages={747\ndash 817},
         url={https://doi.org/10.1090/S0002-9904-1967-11798-1},
      review={\MR{228014}},
}

\bib{MR843194}{article}{
      author={Smyth, C.~J.},
       title={A coloring proof of a generalisation of {F}ermat's little
  theorem},
        date={1986},
        ISSN={0002-9890},
     journal={Amer. Math. Monthly},
      volume={93},
      number={6},
       pages={469\ndash 471},
         url={https://doi.org/10.2307/2323475},
      review={\MR{843194}},
}

\bib{MR317119}{article}{
      author={Steinlein, H.},
       title={Ein {S}atz \"{u}ber den {L}eray-{S}chauderschen
  {A}bbildungsgrad},
        date={1972},
        ISSN={0025-5874},
     journal={Math. Z.},
      volume={126},
       pages={176\ndash 208},
         url={https://doi.org/10.1007/BF01122324},
      review={\MR{317119}},
}

\bib{MR3392979}{article}{
      author={Steinlein, H.},
       title={70 years of asymptotic fixed point theory},
        date={2015},
        ISSN={1661-7738},
     journal={J. Fixed Point Theory Appl.},
      volume={17},
      number={1},
       pages={3\ndash 21},
         url={https://doi.org/10.1007/s11784-015-0249-9},
      review={\MR{3392979}},
}

\bib{MR3654835}{article}{
      author={Steinlein, H.},
       title={Fermat's little theorem and {G}auss congruence: matrix versions
  and cyclic permutations},
        date={2017},
        ISSN={0002-9890},
     journal={Amer. Math. Monthly},
      volume={124},
      number={6},
       pages={548\ndash 553},
         url={https://doi.org/10.4169/amer.math.monthly.124.6.548},
      review={\MR{3654835}},
}

\bib{MR3896053}{article}{
      author={Straub, A.},
       title={Supercongruences for polynomial analogs of the {A}p\'{e}ry
  numbers},
        date={2019},
        ISSN={0002-9939},
     journal={Proc. Amer. Math. Soc.},
      volume={147},
      number={3},
       pages={1023\ndash 1036},
         url={https://doi.org/10.1090/proc/14301},
      review={\MR{3896053}},
}

\bib{MR2887613}{article}{
      author={Sun, Z.-H.},
       title={Congruences for sequences similar to {E}uler numbers},
        date={2012},
        ISSN={0022-314X},
     journal={J. Number Theory},
      volume={132},
      number={4},
       pages={675\ndash 700},
         url={https://doi.org/10.1016/j.jnt.2011.08.005},
      review={\MR{2887613}},
}

\bib{MR28329}{article}{
      author={Szele, T.},
       title={Une g\'{e}n\'{e}ralisation de la congruence de {F}ermat},
        date={1948},
        ISSN={0909-3540},
     journal={Mat. Tidsskr. B},
      volume={1948},
       pages={57\ndash 59},
      review={\MR{28329}},
}

\bib{zbMATH02629186}{misc}{
      author={{Thue}, A.},
       title={{Ein kombinatorischer Beweis eines Satzes von \textit{Fermat}}},
    language={German},
         how={{Christiana Vid. Selsk. Skr. 1910, Nr. 3, 7 S. (1910).}},
        date={1910},
}

\bib{MR808885}{article}{
      author={Vereshchagin, N.~K.},
       title={The problem of the appearance of a zero in a linear recursive
  sequence},
        date={1985},
        ISSN={0025-567X},
     journal={Mat. Zametki},
      volume={38},
      number={2},
       pages={177\ndash 189, 347},
      review={\MR{808885}},
}

\bib{MR2342588}{article}{
      author={Vinberg, E.~B.},
       title={On some number-theoretic conjectures of {V}. {A}rnold},
        date={2007},
        ISSN={0289-2316},
     journal={Jpn. J. Math.},
      volume={2},
      number={2},
       pages={297\ndash 302},
         url={https://doi.org/10.1007/s11537-007-0705-4},
      review={\MR{2342588}},
}

\bib{zbMATH03029875}{article}{
      author={{Vinogradov}, I.~M.},
       title={{A new estimate of a trigonometric sum containing primes.}},
    language={Russian},
        date={1938},
        ISSN={0373-2436},
     journal={{Izv. Akad. Nauk SSSR, Ser. Mat.}},
      volume={1938},
      number={1},
       pages={1\ndash 14},
}

\bib{MR0159905}{article}{
      author={\v{S}arkovs{\cprime}ki\u{\i}, O.~M.},
       title={Co-existence of cycles of a continuous mapping of the line into
  itself},
        date={1964},
        ISSN={0041-6053},
     journal={Ukrain. Mat. \v{Z}.},
      volume={16},
       pages={61\ndash 71},
      review={\MR{0159905}},
}

\bib{MR1458718}{article}{
      author={Ward, T.},
       title={An uncountable family of group automorphisms, and a typical
  member},
        date={1997},
        ISSN={0024-6093},
     journal={Bull. London Math. Soc.},
      volume={29},
      number={5},
       pages={577\ndash 584},
         url={https://doi.org/10.1112/S0024609397003330},
      review={\MR{1458718}},
}

\bib{MR1619569}{article}{
      author={Ward, T.},
       title={Almost all {$S$}-integer dynamical systems have many periodic
  points},
        date={1998},
        ISSN={0143-3857},
     journal={Ergodic Theory Dynam. Systems},
      volume={18},
      number={2},
       pages={471\ndash 486},
         url={https://doi.org/10.1017/S0143385798113378},
      review={\MR{1619569}},
}

\bib{zbMATH02705034}{article}{
      author={{Weyr}, Ed.},
       title={{Ueber einem zahlentheoretischen Satz}},
    language={Czech},
        date={1882},
     journal={{\v{C}as. Mat. Fys.}},
      volume={11},
       pages={39},
}

\bib{MR2422026}{article}{
      author={Windsor, A.~J.},
       title={Smoothness is not an obstruction to realizability},
        date={2008},
        ISSN={0143-3857},
     journal={Ergodic Theory Dynam. Systems},
      volume={28},
      number={3},
       pages={1037\ndash 1041},
         url={https://doi.org/10.1017/S0143385707000715},
      review={\MR{2422026}},
}

\bib{wojcik}{unpublished}{
      author={W\'{o}jcik, K.},
       title={Newton sequences and {D}irichlet convolution},
        note={preprint},
}

\bib{yeates}{article}{
      author={Yeates, A.R.},
      author={Hornig, G.},
       title={Dynamical constraints from field line topology in magnetic flux
  tubes},
        date={2011},
     journal={J. Phys. A: Math. Theor.},
      volume={44},
       pages={265501},
}

\bib{MR0315533}{article}{
      author={Zabre\u{\i}ko, P.~P.},
      author={Krasnosel{\cprime}ski\u{\i}, M.~A.},
       title={Iterations of operators, and fixed points},
        date={1971},
        ISSN={0002-3264},
     journal={Dokl. Akad. Nauk SSSR},
      volume={196},
       pages={1006\ndash 1009},
      review={\MR{0315533}},
}

\bib{MR2465609}{article}{
      author={Zhang, G.~Y.},
       title={The numbers of periodic orbits hidden at fixed points of
  {$n$}-dimensional holomorphic mappings},
        date={2008},
        ISSN={0143-3857},
     journal={Ergodic Theory Dynam. Systems},
      volume={28},
      number={6},
       pages={1973\ndash 1989},
         url={https://doi.org/10.1017/S0143385708000023},
      review={\MR{2465609}},
}

\end{biblist}
\end{bibdiv}

\end{document}